\newtheorem{thm}{Theorem}[section]
\newtheorem{cor}[thm]{Corollary}
\newtheorem{prop}[thm]{Proposition}
\newtheorem{lem}[thm]{Lemma}
\theoremstyle{definition}
\newtheorem{defn}[thm]{Definition}
\newtheorem{ex}[thm]{Example}
\newtheorem{rmk}[thm]{Remark}
\newtheorem*{ack}{Acknowledgments}
\numberwithin{equation}{section}
\newcommand{\even}{\textrm{even}}
\newcommand{\odd}{\textrm{odd}}
\newcommand{\U}{U}
\newcommand{\II}{\mathcal{I}}
\newcommand{\Tr}{\mathrm{Tr}}
\newcommand{\B}{BU\times \Z}%{\mathcal B}
\newcommand{\R}{\mathbb R}
\newcommand{\C}{\mathbb C}
\newcommand{\Z}{\mathbb {Z}}
\newcommand{\cc}{{cpt}}
\newcommand{\N}{\mathbb {N}}
\newcommand{\E}{E}%{\mathrm{E}}
\newcommand{\Om}{\Omega}
\title[Differential $K$-theory as equivalence classes...]{Differential $K$-theory as equivalence classes of maps to Grassmannians and unitary groups}
\author[T.~Tradler]{Thomas~Tradler}
  \address{Thomas Tradler,
  Department of Mathematics, New York City College of Technology, City University of New York, 300 Jay Street, Brooklyn, NY 11201}
  \email{ttradler@citytech.cuny.edu}
\author[S.~Wilson]{Scott O. Wilson}
  \address{Scott O. Wilson, Department of Mathematics, Queens College, City University of New York, 65-30 Kissena Blvd., Flushing, NY 11367}
  \email{scott.wilson@qc.cuny.edu}
\author[M.~Zeinalian]{Mahmoud~Zeinalian}
  \address{Mahmoud Zeinalian, Department of Mathematics, LIU Post, Long Island University, 720 Northern Boulevard, Brookville, NY 11548, USA} 
  \email{mzeinalian@liu.edu}
\begin{document}

\begin{abstract}
We construct a model of differential $K$-theory, using the geometrically defined Chern forms, whose cocycles are certain equivalence classes of maps into the Grassmannians and unitary groups. In particular, we produce the circle-integration maps for these models using classical homotopy-theoretic constructions, by incorporating  certain differential forms which reconcile the incompatibility between these even and odd Chern forms. By the uniqueness theorem of Bunke and Schick, this model agrees with the spectrum-based models in the literature whose abstract Chern cocycles are compatible with the delooping maps on the nose.\end{abstract}
%\allowdisplaybreaks

\maketitle

\setcounter{tocdepth}{2}
\tableofcontents

\section{Introduction}

Differential cohomology theories provide a refinement of cohomology theories which incorporate additional geometric information. A historically important first example is \emph{differential ordinary cohomology}, where elements of ordinary cohomology, such as the first Chern class, are enriched by the additional geometric data, such as a line bundle with connection. These stem from original discussions of Cheeger-Simons \cite{CS} concerning differential characters, Harvey and  Lawson \cite{HL} concerning spark complexes, and  Deligne cohomology \cite{D}. 

Hopkins and Singer in \cite{HS}  showed that every cohomology theory has a differential refinement, and  gave a construction of a differential refinement of a cohomology theory, as a fibered product of the cohomology theory with differential forms or cocycles.
In  \cite{BS2}, Bunke and Schick give axioms for differential cohomology theories, and in particular show that having both a differential extension, and an $S^1$-integration map, often determines a  differential cohomology theory uniquely, up to a unique natural isomorphism, whereas the differential extension alone does not. For example, this is the case for differential $K$-theory, which is the topic of this paper.  We refer the reader to \cite{BS3} for a survey of several applications of differential $K$-theory to mathematics and physics. 

In \cite{BS} Bunke and Schick construct a geometric model of differential $K$-theory, based on ideas of local index theory, using families of Dirac operators. This model is advantageous for its connections and applications to index theory. Subsequent work by Simons and Sullivan \cite{SS} constructed a differential extension of the even degree part of $K$-theory as roughly the Grothendieck group of vector bundles \emph{with connection}. This even degree construction agrees with the even part of differential $K$-theory. A particularly nice feature of this model is that the construction does not require the initial data of differential forms;  vector bundles with connection, and no additional geometric data, naturally determine even degree differential $K$-theory. More recently, it has been proposed that differential cohomology theories are perhaps best defined in terms of sheaves of spectra on manifolds \cite{BNV}, \cite{HS}, and has been generalized to incorporate cohesion in \cite{USc}. 

In section 2 we recall the definitions needed for this paper. We show that for a fixed differential cohomology theory, it follows from the axioms that the underlying functor is well defined on a non-trivial quotient of the category of smooth manifolds, where a morphism is an equivalence class of smooth maps, determined by the fiber integration of ``Chern forms'' (see Remark \ref{rmk:quotientcat}). This provides a refinement of the homotopy category, sensitive to the theory. For differential $K$-theory, we denote this category by $Smooth_{\hat K}$, where two smooth maps $f_0, f_1 :M \to N$ are equivalent if and only if there is a smooth homotopy $f_t$ such that $\int_I f_t^* X$ is a Chern form on $M$, whenever $X$ is Chern form on $N$.

In section 3 we construct models for an even and odd differential extensions of $K$-theory. The odd case is similar to the authors' work in
\cite{TWZ3}, using \emph{only} the mapping space of the stable unitary group $U$, though the space $U$ is changed slightly here in order to build the $S^1$-integration maps for the theory. The subsequently described even differential extension of $K$-theory, constructed using a specific model of $\B$ as the target, is entirely new. In particular, we show that the data of differential forms is not required as input to the theory, but rather is derivable from the geometry of the model for $\B$ used here. In light of the constructions in \cite{SS} mentioned above, the idea that this is possible is perhaps not too surprising, but  there are several technical difficulties in constructing a suitable model of $\B$, with explicit requisite structure (\emph{e.g.} the monoid structure) which are overcome here in detail. 
 Moreover, there is a natural map from the even part of our model to the Simons-Sullivan model of differential $K$-theory which, using a strengthening of the Narasimhan Ramanan theorem, we prove to be an isomorphism.
 
A nice consequence of the even and odd degree models given here is that the set of cocycles is a rather small and familiar object from topology. These models are both expected to have interesting connections with higher degree gerbes and  field theories.

In section 4 we construct explicit $S^1$-integration maps for the theory. In particular this implies that the definitions given in \cite{TWZ3} do indeed define the odd degree part of differential $K$-theory, which was previously obtained by Hekmati, Murray, Schlegel, and Vozzo, in \cite{HMSV}, using different methods. 

From the onset in this paper, we insist on using the even and odd degree Chern forms that appear in geometry, as opposed to abstract cocycle representatives.  To build the $S^1$-integration map, we must then construct explicit differential forms which measure the 
failure of  compatibility between these Chern forms, the $S^1$-integration of differential forms, and the natural candidates for 
$S^1$-integration coming from the transformations on mapping spaces induced by explicit homotopy equivalences $\B \to \Om U$ and
 $\Om (\B) \to U$.  The existence of such cochains has been proved previously using abstract cohomological arguments \cite{HS}, but to our knowledge these explicit expressions are new.
 
It is natural to ask how these results relate to the possible \emph{representability} of differential $K$-theory. Of course, one could not possibly represent this functor by a finite dimensional manifold, and on the other hand, any  functor extends to a representable functor on some category (\emph{e.g.} the functor category, by Yoneda's Lemma).  One could ask for representability in some geometric category such as diffeological spaces. 
 
 The results here produce \emph{by definition} set bijections 
\[
\hat K^0(M) = Hom(M , \B) \quad \quad \hat K^{-1}(M) = Hom(M , U)
\]
where the Hom sets are defined as $CS$-equivalence classes of maps into the respective target: two maps 
$f_0$ and $f_1$ are equivalent if and only if there is a smooth homotopy $f_t$ such that $\int_I f_t^* Ch$ is a Chern form on $M$, 
where $Ch$ denotes \emph{the} Chern form on $BU$ or $U$ defined below, respectively. 
One might hope that the equivalence relation defining $Hom(M,N)$ in $Smooth_{\hat K}$ adapted to the cases $N=BU$ and $N=U$ would give differential $K$-theory as well, but it turns out that set is too large, even for $M=pt$. We are resolved then that
 differential $K$-theory is equal to equivalence classes of maps determined by fiber integration of 
\emph{the} Chern form on targets $BU$ and $U$, and is functorial with respect to equivalence classes of maps determined by fiber integration of  \emph{any} Chern form on target $N$.

\begin{ack}
The first and second authors were supported in part by grants from The City University of New York PSC-CUNY Research Award Program. The third author was partially supported by the NSF grant DMS-1309099 and would like to thank the Max Planck Institute and the Hausdorff Institute for Mathematics for their support and hospitality during his visits. 
All three authors gratefully acknowledge support from the Simons Center for Geometry and Physics, Stony Brook University, at which some of the research for this paper was performed.
\end{ack}

\section{Differential Extensions of $K$-theory}\label{SEC:Diff-Ext-of-K}

In this section, we recall  the definition of a differential extension of $K$-theory with $S^1$-integration map from Bunke and Schick \cite{BS2} and \cite{BS3}, as well as a uniqueness theorem for differential $K$-theory, \cite[Theorem 3.3]{BS3}, which will be used to identify the  construction given here with differential $K$-theory. 
Denote by $K^*(M)$ the complex $K$-theory of a manifold $M$ (possibly with corners). Recall that the Chern character $Ch$ induces a ($\Z_2$-graded) natural transformation $[Ch]:K^*(M)\to  H^*(M)$. (For an explicit construction of these, see the next section.)

\begin{defn}[Definition 2.1, \cite{BS3}] \label{defn:diffext}
A \emph{differential extension of $K$-theory} is a quadruple $(\hat K, R, I , a)$, where $\hat K$ is a contravariant functor from the category of compact smooth manifolds to the category of $\Z_2$-graded abelian groups, and $R$, $I$, $a$ are natural transformations
\begin{enumerate}
\item $R : \hat K^* (M) \to \Om^*_{cl} (M;\R)$ 
\item $I :  \hat K^* (M)  \to K^*(M)$
\item $a: \Om^{*}(M;\R) / Im(d) \to \hat K^{*+1}(M)$
\end{enumerate}
such that
\begin{enumerate}[resume]
\item\label{EQU:diff-k-theory-diamond}
The following diagram commutes
\[
\xymatrix{
 & K^{*}(M) \ar[rd]^{[Ch]} & \\
 \hat K^{*}(M) \ar[ru]^I \ar [rd]^{R} & &H^{*}(M) \\
 & \Om^{*}_{cl} (M) \ar[ru]_-{\textrm{deRham}} &
}
\]
\item $R \circ a = d$
\item The following sequence is exact
\[
\xymatrix{
K^{*-1}(M) \ar[r]^-{[Ch]} &  \Om^{*-1}(M;\R) / Im(d) \ar[r]^-{a} & \hat K^{*}(M) \ar[r]^{I} & K^*(M) \ar[r]^{\quad 0} & 0
} 
\]
\end{enumerate}
\end{defn}

\begin{rmk}
The diagram in condition \eqref{EQU:diff-k-theory-diamond} fits into the following \emph{character diagram} for differential $K$-theory, where the top and bottom sequences that connect $H^{*-1}(M)$ with $H^*(M)$ form long exact sequences. \[
\xymatrix{
 0\ar[rd] &  &  &  & 0  \\
 & Ker(R)\ar[rr]\ar[rd] &  & K^*(M) \ar[rd]^{[Ch]}\ar[ru]&   \\
 H^{*-1}(M)\ar[ru]\ar[rd] &  & \hat K^{*}(M) \ar[ru]^{I}\ar[rd]^{R} & & H^{*}(M) \\
 & \frac{\Om^{*-1}(M;\R)}{Im([Ch])} \ar[rr]^{d}\ar[ru]^{a} &  & Im(R)  \ar[rd]\ar[ru] &  \\
 0\ar[ru] &  &  &  & 0  
}
\]
The group $Ker(R)$ has been considered by Karoubi \cite{K}, and subsequently by Lott \cite{L}, as $\R/\Z$-valued $K$-theory.
Bunke and Schick have shown that $Ker(R)$, the so-called flat-theory, is a homotopy invariant, and in fact extends to a cohomology theory (Theorem 7.11 \cite{BS2}).
\end{rmk}

The following homotopy lemma is given in \cite[Lemma 5.1]{BS2}.

\begin{lem} \label{lem:hmptyformula}
Suppose $f_t : M \times I \to N$ is a smooth homotopy. Then for any $x \in \hat K^*(N)$ we have
\[
f_1^* x - f_0^* x = a \left( \int_I f_t^* R(x) \right)
\]
 \end{lem}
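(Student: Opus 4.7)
My plan is to reduce the lemma to a single universal homotopy formula on $M \times I$, then verify that formula by splitting the differential $K$-theory class via the exact sequence in Definition \ref{defn:diffext}.

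First, given the homotopy $\tilde f : M\times I \to N$ with $f_t = \tilde f(\cdot,t)$, set $y = \tilde f^* x \in \hat K^*(M\times I)$ and let $i_k : M \hookrightarrow M\times I$ denote the inclusion at $t=k$. Because $f_k = \tilde f \circ i_k$ and $R$ is natural, one has $i_k^* y = f_k^* x$ and $\int_I R(y) = \int_I f_t^* R(x)$. So the lemma will follow from the universal statement that for every $y \in \hat K^*(M\times I)$,
\[
i_1^* y - i_0^* y \;=\; a\!\left(\int_I R(y)\right).
\]

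To prove this, I would decompose $y$. Since $p : M\times I \to M$ is a smooth homotopy equivalence, $p^* : K^*(M) \to K^*(M\times I)$ is an isomorphism, and by exactness the map $I : \hat K^*(M) \to K^*(M)$ is surjective. Hence one can choose $y' \in \hat K^*(M)$ with $I(p^*y') = I(y)$, so that $y - p^*y' \in \ker I = \mathrm{Im}(a)$ and $y = p^*y' + a(\omega)$ for some $\omega \in \Om^{*-1}(M\times I)/\mathrm{Im}(d)$. Both sides of the universal formula are linear in $y$, so it suffices to verify each summand. For $y = p^*y'$ both sides vanish: the left because $i_k^* \circ p^* = \mathrm{id}$, and the right because $R(p^*y') = p^*R(y')$ is pulled back from $M$ and therefore integrates to zero along the $I$-fibers. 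For $y = a(\omega)$, the left side equals $a(i_1^*\omega - i_0^*\omega)$ by naturality of $a$, while using $R \circ a = d$ together with the classical Cartan/Stokes formula for fiber integration,
\[
\int_I d\omega \;=\; i_1^*\omega - i_0^*\omega - d\!\int_I \omega,
\]
and the fact that $a$ annihilates exact forms, the right side equals $a(i_1^*\omega - i_0^*\omega)$ as well.

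The main obstacle in this plan is really the decomposition $y = p^*y' + a(\omega)$: it requires combining the surjectivity of $I$ with the homotopy invariance $p^* : K^*(M) \xrightarrow{\sim} K^*(M\times I)$ of the underlying cohomology theory. Once that is secured, the rest is routine bookkeeping using the axioms $R \circ a = d$, exactness of the sequence, naturality of $a$, and the standard fiber-integration formula for differential forms on $M \times I$.
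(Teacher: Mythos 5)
Your proof is correct. Note that the paper does not supply its own proof of this lemma; it quotes it from \cite[Lemma 5.1]{BS2}. Your argument — reducing to the universal statement on $M\times I$, decomposing $y = p^*y' + a(\omega)$ via homotopy invariance of $K^*$, surjectivity of $I$, and $\ker I = \mathrm{Im}(a)$, then finishing with $R\circ a = d$ and the Cartan/Stokes fiber-integration formula — is exactly the standard argument used in that reference, so the two proofs are essentially the same.
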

 
\begin{defn} \label{defn:SmoothK}
For any differential extension $\hat K$ of $K$, there is a category $Smooth_{\hat K}$ with objects smooth manifolds, and morphisms given by equivalence classes of smooth maps, where 
two smooth maps $f_0, f_1: M \to N$ are equivalent if there is a smooth homotopy $f_t: M \times I \to N$, such that
\[
\int_I f_t^* X \in \Om^*(M) \in Im(R) \quad \textrm{whenever} \quad  X \in Im(R).
\]
\end{defn}

Let us check this indeed defines a category. The relation is easily seen to be an equivalence relation, since integration along the fiber is additive, and image of $R$ is a subgroup. The only remaining item to check is that composition of morphisms is well defined. Suppose $f_0 \sim f_1$, via some smooth homotopy $f_t : M \times I \to N$, and $g_0 \sim g_1$, via some smooth homotopy $g_t : N \times I \to P$.
Then $h_t : M \times I \to P$ defined by $h_t = (g_t \circ f_0) * (g_1 \circ f_t )$ is a homotopy from $g_0 \circ f_0$ to $g_1 \circ f_1$,
and, if $X \in Im(R)$ then  
\[
\int_I h_t^*X =  f_0^* \int_I   g_t^* X + \int_I  f_t ^*(g_1^* X) .
\]
For the first summand on the right hand side, $\int_I   g_t^* X$ is in the image of $R$ by assumption on $g_t$, and therefore
so is $f_0^* \int_I   g_t^* X $ by  naturality of $\hat K$. Also, $X \in Im(R)$ implies 
$g_1^* X \in Im(R)$ by naturality of $\hat K$, so $ \int_I  f_t ^*(g_1^* X)$ is in the image of $R$ by assumption on $f_t$. This shows $g_0 \circ f_0$ is equivalent to $g_1 \circ f_1$, and we are done.

\begin{cor} \label{cor:Konquotient}
For any differential extension $\hat K$ of $K$, the underlying functor $\hat K$ is well defined on $Smooth_{\hat K}$. 
\end{cor}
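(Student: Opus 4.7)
The plan is to show directly that if $f_0, f_1 : M \to N$ represent the same morphism in $Smooth_{\hat K}$, then the induced maps $f_0^*, f_1^* : \hat K^*(N) \to \hat K^*(M)$ coincide. All the necessary ingredients are already in place in Definition \ref{defn:diffext} and Lemma \ref{lem:hmptyformula}; the proof is essentially a diagram chase through the character diamond.

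First I would fix $x \in \hat K^*(N)$ and choose a smooth homotopy $f_t$ witnessing $f_0 \sim f_1$. Applying Lemma \ref{lem:hmptyformula} gives
\[
f_1^* x - f_0^* x = a\!\left(\int_I f_t^* R(x)\right).
\]
Since $R(x)$ is, tautologically, a Chern form on $N$, the defining condition of $Smooth_{\hat K}$ forces $\omega := \int_I f_t^* R(x)$ to lie in $Im(R)$ in the appropriate degree on $M$; say $\omega = R(y)$ for some $y \in \hat K^{*-1}(M)$.

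To finish, I would use the commutative diamond in condition (4) of Definition \ref{defn:diffext}: the de Rham class of any form in the image of $R$ coincides with the Chern character of its image under $I$. In particular, $[\omega] = [Ch](I(y))$ in $H^{*-1}(M)$, so $\omega$ and any closed-form representative of $[Ch](I(y))$ differ by an exact form. Hence the class of $\omega$ in $\Om^{*-1}(M)/Im(d)$ lies in the image of $[Ch] : K^{*-1}(M) \to \Om^{*-1}(M)/Im(d)$, and by the exactness condition (6) of Definition \ref{defn:diffext} we conclude $a(\omega) = 0$, so $f_1^* x = f_0^* x$. I do not foresee any serious obstacle: the argument is a direct chase once the homotopy formula and the diamond are invoked, the only subtle point being to recognize $Im(R)$ as landing inside $Ker(a)$ via the diamond, rather than via some direct appeal to $R \circ a = d$.
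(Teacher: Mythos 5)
Your proof is correct and takes essentially the same route as the paper's: apply the homotopy formula (Lemma \ref{lem:hmptyformula}) to reduce to showing $a\bigl(\int_I f_t^* R(x)\bigr)=0$, then use the commutative diamond to place $\int_I f_t^* R(x) \in Im(R)$ inside $Im([Ch])$ modulo exact forms, and conclude via the exactness condition $Ker(a)=Im([Ch])$. The only difference is that you unpack the step ``$Im(R)$ mod exact $= Im([Ch])$'' slightly more explicitly by writing $\omega = R(y)$ and invoking $[R(y)] = [Ch](I(y))$; the paper states this more tersely but means the same thing.
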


\begin{proof}
It suffices to show that if two smooth maps $f_0, f_1: M \to N $ are equivalent, then $f_0^* = f_1^*$. 
Suppose $f_t: M \times I \to N$ is a smooth homotopy such that 
\[
\int_I f_t^* X \in \Om^*(M) \in Im(R)
\]
whenever $ X \in Im(R)$. Then, by the homotopy lemma \ref{lem:hmptyformula}, for any $x \in \hat K(N)$ we have
\[
f_1^* x - f_0^* x  = a \left( \int_I f_t^* R(x) \right).
\]
By commutative diagram of $\hat K^*$ we have that $Im([Ch])$ is equal to $Im(R)$ mod exact, and by the exact sequence we have 
that $Ker(a) = Im([Ch])$, so the right hand side vanishes, and therefore $f_0^* = f_1^*$. 
\end{proof}

\begin{rmk} \label{rmk:quotientcat}
One can similarly define a differential extension $\hat E$ of any cohomology theory $E$, \cite{BS2}. Just as in Definition
\ref{defn:SmoothK} above, one obtains a quotient of the category $Smooth$ of smooth manifolds, by declaring two 
smooth maps $f_0, f_1: M \to N$ to be equivalent if there is a smooth homotopy $f_t: M \times I \to N$, such that
\[
\int_I f_t^* X \in \Om^*(M) \in Im(R) \quad \textrm{whenever} \quad  X \in Im(R).
\]
 where $R: \hat E \to \Om$ is given in the differential extension. It follows just as in Corollary \ref{cor:Konquotient}, using only the axioms of a differential extension,  that $\hat E$ is well defined on this category. 
\end{rmk}

\subsection{$S^1$-Integration}

 We next recall that the $S^1$-integration map in $K$-theory, and then the notion of an $S^1$-integration for a differential extension of $K$-theory.

\begin{defn} \label{def;S1intKtheory}
The inclusion $j: M \to M \times S^1$ via the basepoint and the projection $p: M \times S^1 \to M$
induce a direct sum decomposition 
\[
K^*(M \times S^1) \cong Im(p^*) \oplus Ker(j^*),
\]
where the map is given by $\alpha \mapsto (p^* j^*\alpha, \alpha - p^* j^*\alpha)$. 
The map $q: M \times S^1 \to \Sigma M_+$ induces an isomorphism 
$q^*: \tilde K^* (\Sigma M_+) \to Ker(j^*)$,
which is composed with the suspension isomorphism $\sigma: K^{*-1} (M) \to  \tilde K^* (\Sigma M_+)$
in the following way to define the $S^1$-integration map in $K$-theory
\[
\xymatrix{
\int_{S^1} : K^*(M \times S^1)  \ar[r]^-{pr} & Ker(j^*) \ar[r]^-{(q^*)^{-1}} & \tilde K^* (\Sigma M_+) \ar[r]^-{\sigma^{-1}} & K^{*-1}(M).
}
\]
\end{defn}

\begin{defn}[Definition 1.3, \cite{BS3}] \label{defn:S^1int}
Let $(\hat K, R, I , a)$ be a differential extension of $K$-theory.
An $S^1$ integration map is by definition a natural transformation of functors $\II : \hat K^{*+1} ( - \times S^1)
\to \hat K^*(-)$ satisfying the following three properties:
\begin{enumerate}
\item $\II \circ ( id \times r)^* = - \II$ where $r :S^1 \to S^1$ is given by $r(z) = \bar z$.
\item $\II \circ p^* = 0$ where $p : M \times S^1 \to M$ is projection.
\item The following diagram commutes for all manifolds $M$
\[
\xymatrix{
\Om^*(M \times S^1) /Im(d) \ar[r]^-{a} \ar[d]^{\int_{S^1}} & \hat K^{*+1}(M \times S^1) \ar[r]^{I} \ar[d]^{\II} \ar@/^2pc/[rrr]^{R} & K^{*+1}(M \times S^1) \ar[d]^{\int_{S^1}} & & \Om^*_{cl}(M \times S^1) \ar[d]^{\int_{S^1}} \\
\Om^{*-1}(M) /Im(d) \ar[r]^-{a} & \hat K^{*}(M ) \ar[r]^{I} \ar@/_2pc/[rrr]^{R}& K^{*}(M ) & & \Om^{*-1}_{cl}(M )
}
\]
where the maps $\int_{S^1}$ on differential forms are integration over the fiber $S^1$ and the map $\int_{S^1} : K^{*+1}(M \times S^1) \to K^*(M)$ is the $S^1$-integration map in $K$-theory.
\end{enumerate}
\end{defn}

Bunke and Schick have shown these structure uniquely determine differential $K$ theory. The following
theorem follows from \cite{BS2}, but was succinctly stated as such in Theorem 3.3 of \cite{BS3}.

\begin{thm}[\cite{BS2}, \cite{BS3}]\label{thm:uniqueness-of-diff-k-theory}
Let $(\hat K, R, I, a,\II)$ and $(\hat K', R', I', a',\II')$ be two differential extensions of complex $K$-theory with $S^1$-integrations. Then there is a unique natural isomorphism $\hat K\to \hat K'$, compatible with all the given structures. 
\end{thm}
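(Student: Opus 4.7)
The plan is to establish separately the uniqueness and existence of a natural transformation $\Phi: \hat K \to \hat K'$ compatible with the structure $(R, I, a, \II)$. The $S^1$-integration will play the decisive role in both halves by rigidifying the flat part of the theory, where the indeterminacy lives.

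For uniqueness, suppose $\Phi_0, \Phi_1$ are two such natural transformations and set $\eta := \Phi_1 - \Phi_0$. Compatibility with $R$ gives $R' \circ \eta = 0$, so $\eta$ takes values in the flat subfunctor $Ker(R')$. Compatibility with $a$ gives $\eta \circ a = 0$, and since $Im(a) = Ker(I)$ by exactness in Definition \ref{defn:diffext}(6), $\eta$ factors through $I$ as $\eta = \bar\eta \circ I$ for a natural transformation $\bar\eta : K^* \to Ker(R') \subseteq \hat K'^*$. Invoking the Bunke-Schick identification of $Ker(R')$ with $K^{*-1}(-;\R/\Z)$ (Theorem 7.11 of \cite{BS2}), $\bar\eta$ becomes a natural transformation of cohomology theories $K^* \to K^{*-1}(-;\R/\Z)$.

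Compatibility of $\Phi_0, \Phi_1$ with $\II$ and $\II'$, combined with the identity $\int_{S^1} \circ I = I \circ \II$ from axiom (3) of Definition \ref{defn:S^1int}, forces $\bar\eta$ to commute with the $K$-theoretic $\int_{S^1}$. Thus $\bar\eta$ intertwines suspension isomorphisms, and so is determined by its action on the generators detected through $K^*(S^n)$ via Bott periodicity. A direct check using $K^*(pt;\R/\Z) \cong \R/\Z$ in even degrees and $0$ in odd degrees, together with the parity shift $\ast \mapsto \ast-1$, forces these generating values to vanish, hence $\bar\eta = 0$ and $\Phi_0 = \Phi_1$.

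For existence, I would construct $\Phi$ via a Hopkins-Singer-style lifting recipe: given $\hat x \in \hat K^*(M)$, use surjectivity of $I'$ to choose a lift $\hat y \in \hat K'^*(M)$ of $I(\hat x)$; the closed form $R(\hat x) - R'(\hat y)$ is exact since both project to $Ch(I(\hat x))$ in real cohomology, so write it as $d\omega$ and set $\Phi(\hat x) := \hat y + a'(\omega)$. Additivity and compatibility with $R, I, a$ are straightforward verifications from the axioms, independence from the choices of $\hat y$ and $\omega$ follows from the exact sequence and Lemma \ref{lem:hmptyformula}, and compatibility with $\II$ reduces to the flat uniqueness statement established above applied to the residual ambiguity in $\omega$ on $M \times S^1$.

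The main obstacle, and the reason the $S^1$-integration cannot be dropped from the hypotheses, is precisely this rigidification of the flat subtheory: without the $\II$-axiom, natural transformations $K^* \to Ker(R')$ arising from the $\R/\Z$-direction at a point need not vanish, and the comparison map fails to be unique. The technical heart of the Bunke-Schick argument is in showing that the $\II$-compatibility, interacting with the $K$-theoretic $\int_{S^1}$ through axiom (3), rigidifies exactly this residual ambiguity and no more.
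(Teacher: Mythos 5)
The paper does not prove this theorem: it is quoted from Bunke--Schick as Theorem 3.3 of \cite{BS3}, resting on \cite{BS2}, and is used here as a black box. So your proposal can only be judged against the Bunke--Schick argument itself, not against an internal proof.

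Your reduction for uniqueness follows the right shape: $\eta = \Phi_1-\Phi_0$ lands in $Ker(R')$, factors as $\bar\eta\circ I$ because it annihilates $Im(a)=Ker(I)$, and $\II$-compatibility forces $\bar\eta$ to commute with the $K$-theoretic $\int_{S^1}$. The gap is at the decisive step, where you claim $\bar\eta$ is ``determined by its action on the generators detected through $K^*(S^n)$'' and hence vanishes. Vanishing at a point (from $K\R/\Z^{-1}(pt)=0$) together with commutation with $\int_{S^1}$ does not by itself propagate to vanishing on all compact manifolds: for that you need $\bar\eta$ to be a transformation of \emph{cohomology theories}, compatible with the long exact sequences of pairs, so that a cell-by-cell five-lemma induction can run. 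A priori $\bar\eta$ is only a natural transformation of abelian-group-valued functors on compact manifolds commuting with $\int_{S^1}$; upgrading it to a stable transformation and then killing it is exactly the technical core of Bunke--Schick's uniqueness theorem (Theorem 3.10 of \cite{BS2}), and it uses hypotheses on the coefficient groups $K^*(pt)$ (countably generated, torsion-free, concentrated in even degrees) that never appear in your sketch. As written, the "direct check on spheres" step is an assertion, not an argument.

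The existence half is also plausible in outline, but the claim that independence of $\Phi(\hat x)$ from the choices of $\hat y$ and $\omega$ ``follows from the exact sequence and Lemma \ref{lem:hmptyformula}'' glosses over the same flat-theory indeterminacy that is the issue in the uniqueness half, so the two parts of your argument are more entangled than the phrase ``straightforward verifications'' suggests. Bunke--Schick handle both simultaneously via a construction at the level of the underlying homotopy-theoretic data rather than pointwise in $M$.
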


\section{An Explicit Differential Extension of $K$-Theory}\label{SEC:Diff-K-Theory}

In this section we give an explicit differential extension of $K$-theory by taking certain equivalence classes of maps into the infinite unitary group $U$ or $BU\times \Z$.  
The odd part of the differential extension given here is essentially the one developed in \cite{TWZ3}, whereas the even part is to our knowledge new.
We emphasize that this approach does not require the additional data of differential forms as input.
 We rely on specific models for  $U$ and $BU\times \Z$, with explicit monoid structures, universal Chern forms, and a Bott periodicity map.

\subsection{Model for $U$ and the odd differential extension}

We first recall some  specific constructions for the unitary group $U$.

Let $\C_{-\infty}^{\infty}=span(\{e_i\}_{i \in \Z})$ be the complex vector space given by the span of vectors $\{e_i\}_{i \in \Z}$. We will also denote this by $\C_{-\infty}^{\infty}=\C^\Z_\cc$, the space of maps from $\Z$ to $\C$ with compact support. It will be useful to adopt the following notation
\begin{equation*}
\C^q_p  = span\{e_i | p \leq i < q \},  \quad \C^q_{-\infty}  = span\{e_i | i < q \} , \quad
\C_p^\infty  = span\{e_i | p \leq i  \}.
\end{equation*}
There is an inner product on $\C_p^q$ given by $<e_i,e_j>=\delta_{i,j}$. Note that we have the inclusions $\C_p^q\subset \C_p^{q+1}$ and $\C_p^q\subset \C_{p-1}^q$.

\begin{defn} \label{defn;U}
Let $U_p^q$ be the Lie group of unitary operators $A$ on $\C_p^q$, \emph{i.e.} the space of linear maps $A:\C_p^q\to \C_p^q$ such that $<A(x),A(y)>=<x,y>$ for all $x,y\in \C_p^q$. 
We have the inclusions $U_0^0\subset U_{-1}^1\subset U_{-2}^2\subset\dots$ given by
\[ A\in U_{-p}^p \quad \mapsto \quad 
Id_{\C}\oplus A\oplus Id_\C \in U_{-(p+1)}^{p+1}.
\]
Let
\[
U=U(\C_{-\infty}^\infty)=\bigcup_{p\geq 0} U_{-p}^p
\]
 be the stable infinite unitary group on $\C_{-\infty}^\infty=\C^\Z_\cc$. Equivalently, $U$ is the group of unitary operators on $\C_{-\infty}^{\infty}$ whose difference from the identity $Id$ of $\C_{-\infty}^\infty$ has finite rank. 
 We put the final topology on $U$, that is, a subset $V\subset U$ is open if and only if the space $V\cap U_{-p}^p$ is open in $U_{-p}^p$ for all $p$.
\end{defn}

We remark that this definition of the stable unitary group is isomorphic to the group $U(\C_{0}^\infty)$ of unitary operators on $\C_{0}^\infty=\C^{\N_0}_\cc$ whose difference from the identity on $\C^{\N_0}_\cc$ has finite rank. In fact, any isomorphism $\rho:\C^{\N_0}_\cc\to \C^\Z_\cc$ permuting the ordered basis $\{e_i\}_{i}$ induces an isomorphism of the unitary groups
\[
\tilde\rho: U(\C^\Z_\cc)\to U(\C_\cc^{\N_0}), \quad \tilde \rho(A)=\rho^{-1}\circ A\circ\rho.
\]

In \cite{TWZ3} the author's construct a differential extension of odd $K$-theory using the group $U(\C_{0}^\infty)$. These constructions work equally well with the group $U=U(\C_{-\infty}^\infty)$ defined above, but the model using $\C_{-\infty}^\infty$ fits better with our discussion of $BU \times \Z$ below, so we will review it here for completeness.

In our discussion below, we will need to consider the smooth structures and  DeRham forms on $U$ and $BU\times \Z$. Since $U$ and $BU\times \Z$ are not finite dimensional manifolds, we use the more general notion of plots and differential forms given by plots, see \emph{e.g.} \cite[Definitions 1.2.1 and 1.2.2]{C}. Since $U$ and $BU \times \Z$ are filtered by finite dimensional smooth manifolds,
it is sufficient (and in fact equivalent) to consider differential forms on each finite manifold that are compatible with the filtration, thus justifying the following definition.

A $k$-form $\alpha$ on $U$ is given by a sequence of forms $\{\alpha_p\in\Om^k(U_{-p}^p)\}_{p\geq 0}$
 such that, for all $p \geq 0$, we have 
\[  incl_p^*(\alpha_{p+1})=\alpha_{p}, \]
where $incl_p:U_{-p}^p\subset U_{-(p+1)}^{p+1}$ is the inclusion.

\begin{defn} \label{defn;ChU}
We define the universal odd Chern form $Ch \in \Om^{\textrm{odd}}_{cl}(U)$ by
\begin{equation}\label{EQ-Def:odd-Ch}
Ch:=Tr\sum_{n\geq 0}\frac{(-1)^n}{(2\pi i)^{n+1}}\frac{n!}{(2n+1)!}\omega^{2n+1},
\end{equation}
Here  $\omega=\{\omega_p\}$ where $\omega_p\in \Om^1(U_{-p}^p)$ is the left invariant Lie algebra valued $1$-form on the unitary groups $U_{-p}^p$. Note that $\omega$ is well defined since the inclusions $incl_p:U_{-p}^p\to U_{-(p+1)}^{p+1}$ are group homomorphism and thus $incl_p^*(\omega_{p+1})=\omega_p$. In particular, for any smooth map $g: M \to U$ we have an odd degree closed form 
\[
Ch(g) := g^*(Ch) \in \Om^{\textrm{odd}}_{cl}(M).
\] 
\end{defn}

\begin{defn} \label{EQ-Def:even-CS}
The odd Chern form induces an associated transgression form $CS \in \Om^{\textrm{even}}(PU)$ on the path space $PU$ of $U$, defined 
\begin{equation}
CS:= \int_{t\in I} ev_t^*(Ch)
\end{equation}
where $ev_t : PU \to U$ is evaluation of a path at time $t$.
\end{defn}

By Stokes' theorem we have for $\gamma \in PU$ that 
\begin{equation}\label{EQ:even-CS-Stokes}
d CS_\gamma = ev^*_1(Ch) - ev^*_0(Ch) .
\end{equation}
In particular, for a map $g_t : M\times I \to U$, \emph{i.e.} $g_t:M\to PU$, we have 
\[
CS(g_t) := g_t^*(CS) = \int_{t\in I} Ch(g_t)
\]
satisfies $d CS(g_t) = Ch(g_1) - Ch(g_0)$. Note that this implies that the form $CS \in \Om^{\textrm{even}}(\Om U)$, obtained by restriction to the based loop space $\Om U$ of $U$, is closed.

\begin{defn} \label{defn:CSequivodd}
Two maps $g_0, g_1 : M \to U$ are $CS$-equivalent if there is a smooth homotopy $g_t : M \times I \to U$ such that  $CS(g_t) \in \Om^{\even}(M)$ is \emph{exact}.
\end{defn}

\begin{rmk}
It follows from Theorem \ref{THM:3-statements} below that the previously introduced equivalence relation is the same as the 
equivalence relation: two maps $g_0, g_1 : M \to U$ are $CS$-equivalent if there is a smooth homotopy $g_t : M \times I \to U$ such that  $CS(g_t) \in \Om^{\even}(M)$ is an even degree \emph{Chern form} on $M$ (see Definition \ref{defnCh}).
\end{rmk}

We now wish to introduce a monoid structure on $U$, making the set of $CS$-equivalence class of maps $g:M \to U$ into
an abelian group. This was achieved in \cite{TWZ3} using the elementary block sum operation $\oplus$
on $U(\C_{0}^\infty)$ defined below, which works perfectly well for constructing both the odd $K$-theory group of $M$, and the differential extension in \cite{TWZ3}. 
But, since this operation is discontinuous on $U$, it is cumbersome to use this operation to build $S^1$-integration maps that are  
group homomorphisms induced by Bott Periodicity maps between $BU \times \Z$ and $\Om U$.

Instead, we will introduce another monoid operation on $U$, denoted by $\boxplus$, which is continuous and in fact smooth on plots. As we will show, these two operations will induce the same operations on the $CS$-equivalence classes, \emph{c.f.} Lemma \ref{thm:CSequivUsums1}. First let us recall the elementary block sum.

\begin{rmk} \label{Usum}
In \cite[Section 2]{TWZ3} the block sum operation on $U(\C_\cc^{\N_0})$ is defined as follows. If $A, B\in U(\C_\cc^{\N_0})$ are given by $A=A_0^k\oplus Id_{k+1}^\infty$ and $B=B_0^\ell\oplus Id_{\ell+1}^\infty$, then $A\oplus B=A_0^k\oplus B_0^\ell\oplus Id_{k+\ell+1}^\infty$.  Note that this definition depends on the chosen integer $k$, and there is no consistent choice for $k$ to make this block sum into a continuous operation on all of $U$. However, we may remedy this below by a shuffle sum operation $\boxplus$, which does not depend on any choice.
\end{rmk}

We now  define a shuffle sum operation $\boxplus$ which is continuous on $U$ but not associative.

\begin{defn} \label{Usum2}
Consider the inclusion $U(\C_\cc^\Z)\times U(\C_\cc^\Z)\hookrightarrow U(\C_\cc^\Z\oplus\C_\cc^\Z)=U(\C_\cc^{\Z\sqcup \Z})$. 
For any isomorphism $\rho:\C^\Z_\cc\to \C^{\Z\sqcup \Z}_\cc$ given by relabeling the basis elements $e_i$ of $\C^\Z_\cc$ and $\C^{\Z\sqcup \Z}_\cc$, there is an induced isomorphism of the unitary groups
\[
\tilde\rho: U(\C_\cc^{\Z\sqcup\Z})\to U(\C_\cc^{\Z})=U, \quad \tilde \rho(A)=\rho^{-1} \circ A\circ \rho.
\]
We choose $\rho$ to be the shuffle map $\rho_{sh}:\C^\Z_\cc\to \C^{\Z\sqcup \Z}_\cc$, which maps a basis element $e_i$ with an even index to $\rho_{sh}(e_{2k})=e_k$ into the first $\Z$ component, and a basis element with an odd index $\rho_{sh}(e_{2k+1})=e'_k$ into the second $\Z$ component. With this, we define the (shuffle) block sum as
\[
\boxplus:U(\C_\cc^\Z)\times U(\C_\cc^\Z)\stackrel{incl}{\hookrightarrow} U(\C_\cc^{\Z\sqcup \Z})\stackrel{\tilde\rho_{sh}}{\to} U(\C_\cc^\Z).
\]
Note, that $\boxplus$ is a composition of two continuous maps, and thus continuous.
\end{defn}

\begin{rmk} \label{rmk;CHCSUadd}
With respect to the two operations above, we have $Ch(f \boxplus g) = Ch(f) + Ch(g)$ and $CS(f_t \boxplus g_t) = CS(f_t) + CS(g_t)$, because in both cases the trace is additive. Moreover, $CS(f_t \circ g_t) = CS(f_t) + CS(g_t)$.
\end{rmk}

It follows that the set of  $CS$-equivalence classes of maps $M \to U$ inherits two binary operations, and is a monoid under the operation induced by $\oplus$. The following lemma shows that these two operations on $CS$-equivalence classes are in fact equal.

\begin{lem}\label{thm:CSequivUsums1}
Let $f,g: M \to U$ be smooth. There is a smooth homotopy $h_t : M \times I \to U$ satisfying
$h_0 = f \oplus g$, $h_1 = f \boxplus g$ and $CS(h_t) = 0$. In particular,
$f \oplus g$ is $CS$-equivalent to $f \boxplus g$,  so that $[f \oplus g] =[f \boxplus g]$.
\end{lem}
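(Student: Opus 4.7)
My plan is to build $h_t$ as a pointwise conjugation of the $t$-constant path $f \boxplus g$ by a smooth path of finite-dimensional unitaries, and then to deduce $CS(h_t)=0$ from the additivity of $CS$ under pointwise multiplication recorded in Remark \ref{rmk;CHCSUadd}. The first thing to observe is that both $\oplus$ and $\boxplus$ are of the same general form: take the orthogonal direct sum $f \oplus_{\textrm{incl}} g \in U(\C_\cc^{\Z \sqcup \Z})$ and then transfer back to $U(\C_\cc^\Z)$ via a basis-permuting isomorphism. The difference is only in the choice of relabeling: $\rho_{sh}$ in the case of $\boxplus$, and the relabeling implicit in the definition of $\oplus$ (through a chosen $\rho: \C_\cc^{\N_0} \to \C_\cc^\Z$ as in the remark after Definition \ref{defn;U}). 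Consequently there is a basis-permuting unitary $P$ on $\C_\cc^\Z$ with $f \oplus g = P \cdot (f \boxplus g) \cdot P^{-1}$ pointwise on $M$.

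The next step is to promote $P$ to an honest element of $U$. Since $M$ is compact and $U$ carries the final topology along $U = \bigcup U_{-p}^p$, the maps $f, g$ factor through some $U_{-N}^N$, so $f \boxplus g$ factors through some $U_{-M}^M$; only the restriction of $P$ to the finite set of basis vectors supporting the conjugation matters, and replacing the rest of $P$ by the identity gives $P_\sigma \in U_{-L}^L \subset U$ with $f \oplus g = P_\sigma \cdot (f \boxplus g) \cdot P_\sigma^{-1}$. Since $U_{-L}^L$ is a connected finite-dimensional Lie group, I pick any smooth path $t \mapsto Q_t \in U_{-L}^L$ with $Q_0 = P_\sigma$ and $Q_1 = \mathrm{Id}$, and define
\[
h_t(m) := Q_t \cdot (f \boxplus g)(m) \cdot Q_t^{-1}.
\]
Then $h_t: M \times I \to U$ is smooth, $h_0 = f \oplus g$, and $h_1 = f \boxplus g$, as required.

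Finally, to verify $CS(h_t) = 0$, I apply the additivity $CS(A_t \cdot B_t) = CS(A_t) + CS(B_t)$ of Remark \ref{rmk;CHCSUadd} iteratively:
\[
CS(h_t) = CS(Q_t) + CS(f \boxplus g) + CS(Q_t^{-1}).
\]
The middle term vanishes because $f \boxplus g$ is constant in $t$, so $(f \boxplus g)^* Ch$ has no $dt$-component and its fiber integral over $I$ is zero. For the outer terms, the constant path $Q_t \cdot Q_t^{-1} \equiv \mathrm{Id}$ has $CS = 0$, and one more application of additivity gives $CS(Q_t) + CS(Q_t^{-1}) = 0$. The genuine obstacle in this plan is making the comparison $f \oplus g = P_\sigma \cdot (f \boxplus g) \cdot P_\sigma^{-1}$ precise: one has to pin down the two basis relabelings of $\C_\cc^\Z \oplus \C_\cc^\Z \cong \C_\cc^\Z$ and justify that the infinite tail of $P$ can be truncated to an element of some $U_{-L}^L$ without changing the conjugation on the finite-dimensional support of $f \boxplus g$. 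Once that combinatorial identification is in hand, everything else is a formal consequence of additivity of $CS$ together with connectedness of finite-dimensional unitary groups.
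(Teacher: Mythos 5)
The key step in your plan—deducing $CS(h_t)=0$ from the claimed additivity $CS(A_t\cdot B_t)=CS(A_t)+CS(B_t)$ under pointwise multiplication—does not actually hold, and this is where the argument breaks. Write $u = f\boxplus g$ and $\omega_{h_t}=h_t^{-1}dh_t$ for $h_t=Q_tuQ_t^{-1}$. A direct computation gives, up to conjugation by $Q_t$,
\[
\omega_{h_t}\ \sim\ u^{-1}d_M u \;+\; \bigl(u^{-1}(Q_t^{-1}\dot Q_t)u - Q_t^{-1}\dot Q_t\bigr)\,dt,
\]
and tracing $\omega_{h_t}^{2n+1}$ and integrating out $dt$ produces terms proportional to
\[
\int_0^1 \Tr\!\Bigl(\bigl[Q_t^{-1}\dot Q_t - u\,Q_t^{-1}\dot Q_t\,u^{-1}\bigr]\,\bigl(d_Mu\cdot u^{-1}\bigr)^{2n}\Bigr)\,dt,
\]
which is a genuine degree-$2n$ form on $M$ that has no reason to vanish for an arbitrary smooth path $Q_t$ from $P_\sigma$ to $\mathrm{Id}$. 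More generally, for $g,h:M\to U$ one has $(gh)^{-1}d(gh)=h^{-1}(g^{-1}dg)h + h^{-1}dh$, so $\Tr\bigl(\omega_{gh}^{2n+1}\bigr)$ contains cross terms for $n\ge 1$ that do not cancel; $Ch(gh)\neq Ch(g)+Ch(h)$ as forms, only in cohomology. Consequently, $CS$ of a product path is not the sum of the $CS$'s on the nose, and the ``formal consequence'' you invoke at the end is not available. (The remark you cite is safely used elsewhere in the paper only for concatenation of paths and for block sums, where the trace is literally additive.)

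The paper avoids this by choosing a very particular path, not an arbitrary one. It writes the permutation $P_\sigma$ as a product of adjacent transpositions and for each transposition uses the rotation family $X(t)=\begin{bmatrix}\cos t & \sin t\\ -\sin t & \cos t\end{bmatrix}$ acting on a $2\times 2$ block, setting $S_t=X(t)\,F\,X(t)^{-1}$. The vanishing $CS(S_t)=0$ is then established by a direct trace computation which exploits two special features: the generator $X(t)^{-1}X'(t)=\begin{bmatrix}0 & \mathrm{Id}\\ -\mathrm{Id}&0\end{bmatrix}$ is purely off-block-diagonal, while $(dF)^{2n+1}$ is block diagonal, so $\Tr\bigl(X(t)^{-1}X'(t)(dF)^{2n+1}\bigr)=0$. (This is Lemma 3.6 of \cite{TWZ3}; the paper reproduces the analogous computation for $\B$ in Lemma~\ref{lem:CSzero}.) Your approach correctly identifies the permutation-conjugation picture, but without this specific choice of rotation path and the accompanying trace calculation, the $CS$-form of the homotopy does not vanish. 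To fix the argument you should replace ``pick any smooth path $Q_t$'' by ``compose the rotation paths associated to a decomposition of $P_\sigma$ into adjacent transpositions'' and then replace the appeal to pointwise-multiplication additivity by the block-structure trace argument (or simply cite Lemma 3.6 of \cite{TWZ3}), together with the genuine additivity of $CS$ under path \emph{concatenation}.
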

\begin{proof}
We may assume $f, g: M \to U_{-n}^n$ for some $n \in \N$, so that $f\oplus g$ and $f \boxplus g$ are maps from $M$ to $U_{-2n}^{2n}$. The maps $f\oplus g$ and $f \boxplus g$ differ only by an automorphism of $U_{-2n}^{2n}$ induced by a permutation of coordinates, and since an arbitrary permutation of the coordinates of $\C_{-2n}^{2n}$ may be obtained as a composition of transpositions of adjacent coordinates, it suffices to show there is a path $S_t$ from $A \oplus B$  to $B \oplus A$, for maps $A,B : M \to U$ such that $CS(S_t) = 0$. This is proved in Lemma 3.6 of \cite{TWZ3}. 
\end{proof}

In \cite{TWZ3} it is shown that $\oplus$ in fact induces an abelian group structure on the set of $CS$-equivalence classes of maps $g:M\to U$, with the inverse of $[g]$  given by $[g^{-1}]$. So, by the above Lemma, there is also the same abelian group structure induced by $\boxplus$.

This defines a contravariant functor from compact manifolds to  abelian groups, which we denote by $M \mapsto \hat K^{-1}(M)$. In \cite{TWZ3} the remaining data of a differential extension of  $K^{-1}$ are defined. To conclude this section, we review these here as they'll be used later for the construction of $S^1$-integration.

\begin{defn}[\cite{TWZ3}]\label{DEF:odd-diff-K-theory-R-a-maps}
We define $R = Ch : \hat K^{-1} (M) \to \Om^{\odd}_{cl} (M) $ and $I: \hat K^{-1} (M) \to K^{-1} (M)$ to be the forgetful map, sending a $CS$-equivalence class of a map $g:M\to U$ to its underlying homotopy class. This yields a commutative diagram 
\[
\xymatrix{
 & K^{-1}(M) \ar[rd]^{[Ch]} & \\
 \hat K^{-1}(M) \ar[ru]^I \ar [rd]^{R} & &H^{\odd}(M) \\
 & \Om^{\odd}_{cl} (M) \ar[ru]_-{\textrm{deRham}} &
}
\]

The remaining data is given by a map  $a: \Om^{\even}(M;\R) / Im(d) \to \hat K^{-1}(M)$, constructed as follows.
To define the map $a$ we first construct an isomorphism
 \[
 \widehat{CS} : Ker(I) \to \left( \Om^{\textrm{even} } (M) / Im(d) \right) / Im([Ch])
  \]
  where 
  \[
Ker(I) = \{ [g] | \, \textrm{ there is a path $g_t$ such that $g_1 = g$ and $g_0=1$}\}.
\]
 The map $\widehat{CS}$  is defined for $[g] \in  Ker(I) \subset \hat K^{-1}(M)$ by choosing a (non-unique) map $g_t: M \times I \to U$ such that $g_1 = g$ and $g_0=1$ is the constant map  $M \to \U$ to the identity of $\U$,  and letting
\[
\widehat{CS}([g]) = CS(g_t)   \quad  \in \left( \Om^{\textrm{even} } (M) / Im(d) \right) / Im([Ch]).
\]
 According to \cite{TWZ3}, this map is well defined independently of choice of representative since, modulo exact forms, every even degree $CS$-form on $M$ of a loop $M \to \Om(U)$ can be written as an even degree Chern form of some connection; see \cite[Theorem 3.5]{TWZ3}. Moreover this map is surjective since, modulo exact forms, every even form on $M$ is a the $CS$-form of some path; see \cite[Corollary 5.8]{TWZ3}.

Finally, the map $a= \widehat{CS}^{-1} \circ \pi$ is defined to be the composition of the projection $\pi$ with $\widehat{CS}^{-1}$,
 \[
\xymatrix{
\Om^{\textrm{even} } (M) /  Im(d) \ar[r]^-{\pi} & \left( \Om^{\textrm{even} } (M) /  Im(d) \right) / Im([Ch]) \ar[r]^-{\widehat{CS}^{-1}} & Ker(I) \subset \hat{K}^{-1}(M)
}
\]
and this map satisfies $Ch \circ a = d$, $Ker(a) = Im([Ch])$, and $Im(a) = Ker(I)$, yielding the exact sequence 
\[
\xymatrix{
K^{*-1}(M) \ar[r]^-{[Ch]} &  \Om^{*-1}(M;\R) / Im(d) \ar[r]^-{a} & \hat K^{*}(M) \ar[r]^{I} & K^*(M) \ar[r]^{\quad 0} & 0.
} 
\]
\end{defn}

\subsection{Model for $BU \times \Z$ and the even differential extension}\label{SEC:BUxZ-model}

We now recall the construction of the model of $BU\times \Z$  from McDuff \cite{McD}. We will use this model of $BU \times \Z$ to defined a differential extension $\hat K^0$ of $K^0$.

Let $U$ be the unitary group of operators on $\C_{-\infty}^{\infty}$, as in Definition \ref{defn;U}.
We denote by $I_k:\C_{-\infty}^{\infty}\to \C_{-\infty}^{\infty}$ the orthogonal projection onto $\C^k_{-\infty}$. Of particular interest to us will be the orthogonal projection $I_0$ onto $\C^0_{-\infty}$. 

\begin{defn}
A Hermitian operator on $\C_{p}^q$ is a linear map $h:\C_{p}^q\to \C_{p}^q$ such that $<h(x),y>=<x,h(y)>$. We denote by $H_{p}^q$ the space of Hermitian operators on $\C_{p}^q$ with eigenvalues in $[0,1]$. 
Note, that there are inclusions  $H_0^0\subset H_{-1}^1\subset H_{-2}^2\subset\dots$ given by
\[ h\in H_{-p}^p \quad \mapsto \quad 
Id_\C\oplus h\oplus 0 \in H_{-(p+1)}^{p+1}.
\]
Let  
\[
H=\bigcup_{p\geq 0} H_{-p}^p \label{DEF:H} 
\]
be the union of the spaces under the inclusions. In other words, $H$ is the set of Hermitian operators $h$ on $\C_{-\infty}^{\infty}$ with eigenvalues in the interval $[0,1]$, such that $h- I_0$ has finite rank. Again, we put the final topology on the space $H$; \emph{i.e.} $V\subset H$ is open iff $V\cap H_{-p}^p\subset H_{-p}^p$ is open for all $p$.
\end{defn}

There is an exponential map\label{definition-of-E}
\[
\exp:H\to U,\quad \exp(h):=e^{2\pi i h}=\sum_{n\geq 0}\frac {(2\pi i h)^n} {n!}.
\]
The fiber $\exp^{-1}(Id)$ of the identity $Id=Id_{\C_{-\infty}^\infty} \in U$ is the subset of $H$ of operators with eigenvalues in $\{0,1\}$,
\begin{multline*}
\exp^{-1}(Id) \\ = \{ P \in End( \C_{-\infty}^{\infty} ) | P \textrm{ is Hermitian}, spec(P) \subset \{0,1\}, \, \textrm{and } rank(P- I_0) < \infty \}.
\end{multline*}

The space $H$ is contractible with contracting homotopy $h(t)=th+(1-t)I_0$, providing a path which connects any $h \in H$ to $I_0 \in H$. Therefore, there is an induced map $E$ to the based loop space $\Om U$ of $U$. According to  \cite{D}, \cite{AP}, \cite{B} this map $E$ is a homotopy equivalence

\begin{prop}[\cite{McD}]\label{PROP:McDuff}
The map $E: \exp^{-1}(Id) \to \Om U$ given by
\[
E(P) (t)= e^{2\pi i \left( tP+ (1-t)I_0 \right) }
\]
is a homotopy equivalence.
\end{prop}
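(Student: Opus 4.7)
The plan is to identify $E$ with the connecting map of the (quasi-)fibration $\exp: H \to U$ and then to exploit the contractibility of $H$. First I would verify $E$ is well-defined: if $P \in \exp^{-1}(Id)$ then $P$ is Hermitian with spectrum contained in $\{0,1\}$, as is $I_0$, and the Hermitian operators with spectrum in $[0,1]$ form a convex set, so $tP+(1-t)I_0 \in H$ for every $t \in [0,1]$. At $t=0,1$ the exponent has integer spectrum, so $E(P)(0)=E(P)(1)=Id$, confirming $E(P) \in \Om U$. The whole construction is continuous in $P$ in the final topology, and evidently based.

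Next I would argue that $\exp : H \to U$ is a quasi-fibration in the sense of Dold--Thom with fiber $\exp^{-1}(Id)$ over $Id$. On each finite-dimensional piece $\exp: H_{-p}^p \to U_{-p}^p$ this is the standard picture: the principal branch of the logarithm produces continuous local sections of $\exp$ near the identity, and translation by elements of $\exp^{-1}(Id) \cap H_{-p}^p$ (projections with integer eigenvalues) spreads them over all of $U_{-p}^p$. To pass to the direct limit, one checks Dold's criterion on a distinguished cover of $U$ compatible with the filtration, which is essentially McDuff's argument in the cited paper. The main obstacle in the whole proof lies here: one must ensure that the local trivializations chosen at each finite level are compatible with the filtration inclusions so that the final-topology colimit genuinely inherits the quasi-fibration structure, and that the fiber over $Id$ is correctly identified with $\exp^{-1}(Id)$ rather than just with the $p$-level pieces.

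Given this, the long exact sequence of homotopy groups together with the contractibility of $H$ (via the straight-line deformation $(h,s) \mapsto (1-s)h + sI_0$) yields isomorphisms
\[
\pi_n(\exp^{-1}(Id)) \;\cong\; \pi_{n+1}(U) \;=\; \pi_n(\Om U)
\]
realized by the connecting homomorphism: lift a null-homotopy of $\exp(P) = Id$ in $U$ to a path in $H$ starting at $P$, then record its image under $\exp$. Taking the canonical null-homotopy $t \mapsto tP + (1-t)I_0$ of $P$ in $H$ gives back precisely the loop $E(P)$, so $E$ does realize this connecting map on $\pi_*$. Since $\exp^{-1}(Id)$ is a direct limit of complex Grassmannians and $\Om U$ has the homotopy type of a CW complex, Whitehead's theorem promotes the resulting weak equivalence to an honest homotopy equivalence, completing the argument.
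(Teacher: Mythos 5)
The paper does not prove this proposition; it states it with a citation to McDuff (and the sentence immediately preceding it cites Aguilar--Prieto and Behrens for the fact that $E$ is a homotopy equivalence). Your strategy --- exhibit $\exp : H \to U$ as a quasi-fibration with contractible total space $H$, deduce that the connecting map $\exp^{-1}(Id) \to \Om U$ is a weak equivalence, observe that the straight-line contraction of $H$ makes this connecting map equal to $E$, and upgrade to a genuine homotopy equivalence via Whitehead --- is precisely the framework of those cited references, and the supporting details you fill in (convexity of the set of Hermitian operators with spectrum in $[0,1]$ for well-definedness, CW hypotheses for Whitehead) are correct.

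One imprecision worth flagging concerns where you locate the hard part. You describe the finite-level map $\exp : H_{-p}^p \to U_{-p}^p$ as though it admits local trivializations (logarithmic sections near $Id$ spread around by ``translation''), and place the real obstacle in passing to the colimit and in identifying the fiber over $Id$. In fact, even at a fixed $p$ the map $\exp$ cannot be locally trivial: the fiber over $Id$ is a disjoint union of Grassmannians, while the fiber over a unitary with distinct eigenvalues is a finite set, so the fibers change homotopy type and $\exp$ is genuinely only a quasi-fibration. The substantive content of the Dold--Thom argument in the cited sources is a stratification of $U_{-p}^p$ by eigenvalue multiplicities together with an inductive verification of the distinguished-set criterion over each stratum; compatibility with the colimit is the comparatively routine final step. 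Since you explicitly defer this to the reference, this is a miscalibration of emphasis rather than a gap in the logic, but the sentence about ``local sections plus translation'' should not survive into a final write-up.
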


This justifies the following definition, which will be the model of $BU \times \Z$ used throughout this paper.

\begin{defn} \label{defnBU} 
We will denote by $BU\times \Z$ the space
\begin{multline*}
BU\times \Z:=\exp^{-1}(Id) \\ = \{ P \in End( \C_{-\infty}^{\infty} ) | P \textrm{ is Hermitian}, spec(P) \subset \{0,1\}, \, \textrm{and } rank(P- I_0) < \infty \}.
\end{multline*}
\end{defn}

Note, that there is a one-to-one correspondence between $BU \times \Z$ and the space of linear subspaces of $\C_{-\infty}^\infty$ which contain some $\C_{-\infty}^{p}$ and which are contained in some $\C_{-\infty}^{q}$,
\[
\exp^{-1}(Id) \cong \{ V \subset \C_{-\infty}^\infty\, |  \, \C_{-\infty}^{p} \subset V \subset \C_{-\infty}^{q} \, \textrm{for some $p,q \in \Z$ } \}.
\]
The equivalence is given by $P\mapsto V=Im(P)$ with inverse $V\mapsto proj_V$, where $proj_V$ denotes the orthogonal projection to a subspace $V\subset \C_{-\infty}^\infty$.

Next, we show how to recover the integer $\Z$ factor of $\B$. We define the $rank: \B \to \Z$ as follows. Let $P\in \B$ and let $V=Im(P)$ be the image of $P$, so that $\C_{-\infty}^{p} \subset V \subset \C_{-\infty}^{q}$ for some $p,q\in \Z$. With this, the rank of $P$ is defined to be
\begin{equation}\label{EQU;rank}
rank(P):= p+dim(V/\C_{-\infty}^p).
\end{equation}
Notice that the $rank(P)$ is a well-defined integer independent of the choice of $p$ and $q$.

The topology that $BU\times \Z$ inherits as a closed subspace of $H$ has the property that for any compact subset $K\subset BU\times \Z$, there exist integers $p$ and $q$ such that $K\subset (BU\times \Z)_p^q$, where
\begin{equation}\label{EQU:BUxZ-p-q}
(BU\times \Z)_p^q:=\{ P\in BU\times \Z \,|\,  \C_{-\infty}^{p}\subset Im(P)\subset \C_{-\infty}^{q}\}.
\end{equation}

Let $M$ be a  smooth compact manifold. A map $P: M \to BU\times \Z$ determines a vector bundle with connection over $M$, which is well defined up to the addition of the trivial line bundle with the trivial connection $d$, and a realization of this bundle as a sub-bundle of a trivial bundle, in the following way.  

\begin{defn}\label{DEF:P-assoc-bundle}
 Let  $P:M\to BU\times \Z$ be smooth. Choose $p, q \in \Z$ such that $\C_{-\infty}^{p}\subset Im(P)\subset \C_{-\infty}^{q}$, \emph{i.e.},
\[
Im(P(x))=\C_{-\infty}^{p}\oplus V(x), \text{ with }  V(x)\subset  \C_{p}^{q}
\]
for all $x \in M$. Let $E_P =\sqcup_{x\in M} V(x)$, which is a sub-bundle of the trivial bundle $M\times \C^{q}_{p}$. This vector bundle inherits a fiber-wise metric by restriction of the metric on $H$. The projection operator onto $V(x)$ defines a connection on $E_P$,  given by $\nabla_P (s) = P \circ d(s)$, which is compatible with the metric. Note that changing the  integer $p$, say by subtracting one, adds on the trivial line bundle with the trivial connection.  So we have an assignment
\[
P \mapsto \left( E_P , \nabla_P \right) 
\]
where the right hand side is well defined up to addition of $(\C^n, d)$, for some $n \in \N$.
\end{defn}

The curvature of the connection $\nabla_P$ is given by $R = P(dP)^2 = (dP)^2 P$. 
This justifies the following definition, in which we use the notion of plots and forms given by plots, where $BU \times \Z$ is filtered by the spaces $(BU\times \Z)_{-p}^p$ from equation \eqref{EQU:BUxZ-p-q}. In particular, a $k$-form $\alpha$ on $BU\times \Z$ is given by a sequence of forms  $\{\alpha_p\in\Om^k((BU\times \Z)_{-p}^p)\}_{p\geq 0}$ such that
\[  incl_p^*(\alpha_{p+1})=\alpha_{p}, \]
where  $incl_p:(BU\times \Z)_{-p}^p\subset (BU\times \Z)_{-(p+1)}^{p+1}$, $P\mapsto Id_\C\oplus P\oplus 0$, is the inclusion.

\begin{defn} \label{defnCh}
The universal even Chern form $Ch \in \Om^{\textrm{even}}_{cl}(BU\times \Z)$ is defined by 
\begin{equation}\label{EQ-Def:even-Ch}
Ch(P) := \Tr \sum_{n\geq 0} \frac{1}{(2 \pi i)^{n}} \frac{1}{n!} P(dP)^{2n}
\end{equation}
where, by definition, $\Tr(P) = rank(P)$, \emph{c.f.} equation \eqref{EQU;rank}.
\end{defn}

Note this is well defined since $PdP^2$ is invariant under pullback along the maps $incl_p$, since $d( Id_\C\oplus P\oplus 0) = 0 \oplus dP \oplus 0$.
We also have the associated Chern-Simons form.

\begin{defn} \label{defn;CS}
Denote by $P(\B)$ the path space of $\B$. The universal Chern-Simons form $CS \in \Om^{\textrm{odd}}(P(BU \times \Z) )$ is given using the evaluation map at time $t$, $ev_t:P(BU\times \Z)\to BU\times \Z$, by 
\begin{equation}\label{EQ-Def:odd-CS}
CS:= \int_{t \in I} ev_t^*(Ch).
\end{equation}
\end{defn}

It is straightforward to check that for a map $P_t: M \times I \to BU\times \Z$, \emph{i.e.} $P_t: M \to P(BU\times \Z)$, we have that the pullback $P_t^*(CS) \in \Om^{\odd}(M)$
is given by (\emph{c.f.} \cite[(2.2)]{TWZ3})
\begin{multline} \label{eq:evenCS}
CS(P_t) :=P_t^*(CS)\\
= \int_0^1 \sum_{n\geq 0} \frac{1}{(2 \pi i)^{n+1}} \frac{1}{n!}\Tr \left(  (Id-2P_t) \left( \frac{\partial}{\partial t}  P_t  \right) \overbrace{ dP_t \wedge \dots\wedge dP_t}^{2n+1 \text{ factors}} \right)dt 
\end{multline}
where $R_t =P_t(dP_t)^2$ is the curvature of $P_t$. 
Moreover, by Stokes' theorem we have that
\begin{equation}\label{EQ:odd-CS-Stokes}
d CS_\gamma = ev_1^*(Ch) - ev_0^*(Ch).
\end{equation}

Let $K^0(M) = [M,BU \times \Z]$ denote the homotopy classes of maps from $M$ to $BU\times \Z$. It follows from \eqref{EQ:odd-CS-Stokes} that there is an induced Chern homomorphism $[Ch]:K^{0}(M)\to H^{\even}(M)$, given by $P \mapsto [Ch(P)]$.

We now state some fundamental results concerning Chern and Chern-Simons forms  on a manifold.
\begin{thm}\label{THM:3-statements}
Let $M$ be a smooth compact manifold and consider the maps
\begin{align*}
Ch: Map^0(M, U) & \to \Om^{\odd}_{cl}(M;\R)  \\
Ch: Map^0(M, \B) & \to \Om^{\even}_{cl}(M;\R)  
\end{align*}
given by the pullback of forms \eqref{EQ-Def:odd-Ch} and \eqref{EQ-Def:even-Ch}, where $Map^0$ denotes the connected component of the constant map. Furthermore, consider the maps 
\begin{align*}
CS: Map_*(M \times I, U) &\to \Om^{\even}(M;\R)  \\
CS: Map_*(M \times I, \B) &\to \Om^{\odd}(M;\R)  
\end{align*}
given by the pullback of forms \eqref{EQ-Def:even-CS} and \eqref{EQ-Def:odd-CS}, where $Map_*$ indicates based maps whose time zero value is constant at $1 \in \U$, respectively $I_0 \in \B$. Finally, consider the induced maps 
\begin{align*}
CS_{\Om}: Map(M , \Om U) &\to \Om_{cl}^{\even}(M;\R)  \\
CS_{\Om}: Map(M , \Om (\B)) &\to \Om_{cl}^{\odd}(M;\R)  
\end{align*}
obtained by restricting the maps $CS$ above to those based paths that are based loops.
Then, for both even and odd degree cases, we have the following statements.
\begin{enumerate}
\item\label{statement-1} $Im(d) \subset Im(Ch)$, where $d$ is the DeRham differential.
\item\label{statement-2} $CS$ is onto.
\item\label{statement-3} $Im(Ch) \equiv Im(CS_{\Om})$ modulo exact forms.
\end{enumerate}
\end{thm}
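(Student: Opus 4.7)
The statement bundles six assertions: (1), (2), (3) in each of the even and odd parities. The odd-parity assertions are already known: (2) is \cite[Cor.~5.8]{TWZ3}, (3) is \cite[Thm.~3.5]{TWZ3}, and (1) will follow from (2) by the Stokes argument described below. My plan therefore focuses on the even-parity half, reducing all three statements to a single technical construction.

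The heart of the argument is even (2): for any $\omega\in\Omega^{\odd}(M;\R)$, I plan to construct a based smooth path $P_t:M\times I\to\B$ with $P_0=I_0$ and $CS(P_t)=\omega$. The idea is to imitate the odd-case construction of \cite[Cor.~5.8]{TWZ3}, but working with families of Hermitian idempotents in place of unitaries. Locally, for $\omega$ of fixed degree $2k+1$ and in a coordinate frame, I would write down an explicit one-parameter family of finite-rank projectors in some $\C_p^q\subset\C_{-\infty}^\infty$ whose matrix entries are built from the local coefficients of $\omega$, arranged so that the explicit integrand in \eqref{eq:evenCS}---which involves $\Tr\bigl((Id-2P_t)(\partial_t P_t)(dP_t)^{2n+1}\bigr)$---collapses under trace and antisymmetrization to the prescribed local form. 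Global assembly will use a partition of unity on $M$ together with a shuffle block-sum on $\B$, defined exactly as in Definition~\ref{Usum2} for $U$; trace-additivity gives the even analogue of Remark~\ref{rmk;CHCSUadd}, so $CS$ is additive under such sums and the patch-wise constructions combine correctly.

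With (2) in hand, even (1) is immediate from Stokes. Given exact $d\beta$ with $\beta\in\Omega^{\odd}(M;\R)$, apply (2) to obtain $P_t$ with $P_0=I_0$ constant and $CS(P_t)=\beta$; then \eqref{EQ:odd-CS-Stokes} yields $Ch(P_1)-Ch(I_0)=dCS(P_t)=d\beta$, and $Ch(I_0)=\Tr(I_0)=rank(I_0)=0$ finishes the claim. For even (3), the plan is to observe that both $[Ch(P)]$ for $P:M\to\B$ and $[CS_\Omega(f)]$ for $f:M\to\Omega U$ represent, in $H^{\even}(M;\R)$, the even Chern character of the corresponding class in $K^0(M)$: the former is classical, while the latter follows from the transgression formula \eqref{EQ-Def:even-CS} together with the standard identification of the odd Chern form on $U$ as the suspension of the even Chern character. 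Since the Bott map $E:\B\to\Omega U$ of Proposition~\ref{PROP:McDuff} is a homotopy equivalence, both maps hit precisely the image of the Chern character $K^0(M)\to H^{\even}(M;\R)$, which is the required equality of $Im(Ch)$ and $Im(CS_\Omega)$ modulo exact forms.

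The main obstacle is the explicit local construction in even (2): producing a family of idempotents whose $CS$ reproduces a given degree-$(2k+1)$ form on the nose requires a carefully chosen matrix ansatz so that, after the antisymmetrization and trace in \eqref{eq:evenCS}, exactly the desired monomial survives and all cross-terms cancel. The odd-case template of \cite[Cor.~5.8]{TWZ3} is a guide but not a formal reduction: the Maurer--Cartan form is replaced by $Id-2P_t$, idempotency imposes algebraic constraints that unitaries do not, and the degree shift changes which summand of \eqref{EQ-Def:even-Ch}--\eqref{eq:evenCS} must be targeted. Once this local construction is carried out, the remaining claims are formal consequences.
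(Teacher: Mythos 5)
Your plan is organized by whether the relevant map involves $U$ (which you call the ``odd-parity'' case, since it comes from the odd $K$-theory paper \cite{TWZ3}) or $\B$ (your ``even-parity'' case), but that labeling drifts out of sync with the parities of the differential forms, and the drift hides a genuine gap. Note that $CS$ of paths in $U$ lands in $\Om^{\even}$, while $CS$ of paths in $\B$ lands in $\Om^{\odd}$, and statement~\eqref{statement-3} pairs $Ch$ on $\B$ with $CS_\Om$ on $\Om U$ (both $\Om^{\even}$) and $Ch$ on $U$ with $CS_\Om$ on $\Om(\B)$ (both $\Om^{\odd}$). The reference \cite[Theorem 3.5]{TWZ3} addresses precisely the \emph{even}-form case of~\eqref{statement-3}: it writes the even $CS$-form of a loop $M\to\Om U$ as the even Chern form of a connection, modulo exact. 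Your separate cohomological argument for ``even (3)'' (that $[Ch(P)]$ and $[CS_\Om(f)]$ both hit the image of the Chern character $K^0(M)\to H^{\even}(M;\R)$) re-derives that same even-form case. Nowhere in your proposal is the \emph{odd}-form case of~\eqref{statement-3}---comparing $Ch\colon Map^0(M,U)\to\Om^{\odd}_{cl}$ with $CS_\Om\colon Map(M,\Om(\B))\to\Om^{\odd}_{cl}$---addressed. The paper handles it with a new argument that is not in \cite{TWZ3}: Lemma~\ref{lem;deta2} constructs a form $\eta$ so that the holonomy map $h\colon\Om(\B)\to U$ satisfies $CS_\Om(f)\equiv Ch(h\circ f)$ modulo exact, and then invokes the fact that $h$ is a homotopy equivalence for the reverse containment. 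This step is missing from your plan.

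Two further remarks. First, your route of deducing~\eqref{statement-1} from~\eqref{statement-2} by Stokes is a legitimate and clean shortcut (it works even if~\eqref{statement-2} is only known modulo exact, since one only needs $dCS(g_t)=d\beta$), but be aware that the paper's logic runs in the \emph{opposite} direction in the $U$ case: \cite[Cor.~5.8]{TWZ3} only gives $CS$ onto $\Om^{\even}/\mathrm{Im}(d)$, and the paper \emph{uses} the $\B$-case of~\eqref{statement-1} together with \cite[Thm.~3.5]{TWZ3} and a block-sum to correct the exact error and promote this to genuine surjectivity. Your plan simply cites Cor.~5.8 as if it settled surjectivity, so if you keep it you must supply the upgrade. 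Second, the technical heart of your proposal---a from-scratch local matrix ansatz producing a family of idempotents with a prescribed odd $CS$-form---is a proposed alternative to what the paper cites as \cite[Cor.~2.2]{PT} (sharpening \cite[Prop.~2.10]{SS}); it is a reasonable route but remains a sketch, and the paper avoids the work entirely by citation.
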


\begin{proof}
The first statement \eqref{statement-1} for odd forms is given by Corollary 2.7 of \cite{TWZ3}. For positive degree even forms we know from Proposition 2.1 of \cite{PT} such exact even real forms can be written as the Chern form of connection on a trivial bundle, so the result follows from Narissman-Ramanan \cite{NR} and the fact that trivial bundles are represented by nulhomotopic maps.

The second statement \eqref{statement-2} for odd forms was proved in 
Corollary 2.2 of \cite{PT} (sharpening Proposition 2.10 of \cite{SS}),  and the second statement for even forms is proved as follows. Corollary 5.8 of \cite{TWZ3} states that the map $CS$ is onto $\Om^{\even}(M) / Im(d)$. By the previous case, we may write an exact error as an even Chern form, which by Theorem 3.5 of \cite{TWZ3}  may be written as the even $CS$-form of a based loop. By taking block sum of (or concatenating) the path with the loop, we obtain the desired result.

For the third statement \eqref{statement-3}, first note that the map $CS_{\Om}$ indeed lands in the space of closed forms, due to equations \eqref{EQ:even-CS-Stokes} and \eqref{EQ:odd-CS-Stokes}.

For the case of even forms, statement \eqref{statement-3} is Theorem 3.5 of \cite{TWZ3}. (Alternatively this also follows from Lemma \ref{lem;dbeta} below using the map $E$, by an argument very similar to the one that we give next for the case of odd forms.)  For the case of odd forms, we use Lemma \ref{lem;deta2} below, which shows that the map $h: \Om (\B) \to U$ induced by holonomy satisfies $CS_\Om(f)\equiv Ch(h\circ f) \in \Om^{\odd}_{cl}( M; \R )$ modulo exact forms. But the map $h$ is a homotopy equivalence, so by choosing any homotopy inverse we see that the other containment holds modulo exact forms as well.
\end{proof}

\begin{defn} \label{defn:CSequiveven}
Two maps $P_0, P_1 : M \to BU\times \Z$ are $CS$-equivalent if there is a smooth homotopy $P_t : M \times I \to BU\times \Z$  from $P_0$ to $P_1$ such that $P_t^*(CS) = CS(P_t) \in \Om^{\odd}(M)$ is exact.
\end{defn}

\begin{rmk}
It follows from Theorem \ref{THM:3-statements} that the previously introduced equivalence relation is the same as the 
equivalence relation: two maps $P_0, P_1 : M \to BU\times \Z$ are $CS$-equivalent if there is a smooth homotopy $P_t : M \times I \to BU\times \Z$  from $P_0$ to $P_1$ such that $P_t^*(CS) = CS(P_t) \in \Om^{\odd}(M)$ is an odd degree \emph{Chern form} on $M$.
\end{rmk}

We denote the set of $CS$-equivalence classes of maps from $M$ to $\B$ by the suggestive notation $\hat K^0 (M)$. This is a contravariant functor from the category of smooth manifolds and maps to sets. We wish to introduce an abelian group structure and show this is indeed a differential extension of $K$-theory. The naive definition of sum is given as follows, but suffers the same issue of not being continuous as does Definition \ref{Usum}, since it depends on non-canonical choices. Nevertheless, on plots it is well defined and associative. 

\begin{defn} \label{rmk;BUsum}
We define $\oplus: (\B) \times (\B) \to \B$ as follows. For $P,Q \in \B$ choose
maximal $p,m \in \Z$, and minimal $q,n \in \Z$, such that $Im(P)=\C_{-\infty}^{p}\oplus V$ and $Im(Q)=\C_{-\infty}^{m}\oplus W$,
where $V$ and $W$ are finite dimensional subspaces satisfying  $V\subset \C_{p}^q$ and $W\subset \C_{m}^n$. Then, define
\[
P \oplus Q = proj_{\C_{-\infty}^{p+m}\oplus s_m(V)\oplus s_{q}(W)}
\]
where $proj_Z$ denotes the orthogonal projection to a subspace $Z\subset \C_{-\infty}^\infty$, and $s_k$ denotes the operator given by  $s_k(e_i)=e_{i+k}$. Note that the above definition depends on the integers $p,q,m,n$, which, in general, may vary discontinuously.\end{defn}

We now  define a second sum operation, which is a continuous operation on $\B$, but is not associative. 

\begin{defn} \label{defn;BUsum}
We define $\boxplus: (\B)\times(\B)\to \B$ by defining a shuffle block sum $\boxplus:H\times H\to H$ and then by showing that this factors through $\B\subset H$. Recall that $H=H(\C^\Z_\cc)$ is the set of hermitian operators $h$ on $\C^\Z_\cc$ with eigenvalues in $[0,1]$ such that $h-I_0$ has finite rank, and, similarly, denote by $H(\C^\Z_\cc\oplus \C^\Z_\cc)$ the set of hermitian operators $\tilde h$ on $\C^\Z_\cc\oplus \C^\Z_\cc$ with eigenvalues in $[0,1]$, such that $\tilde h-(I_0\oplus I_0)$ has finite rank. In analogy with Definition \ref{Usum2}, we define the shuffle block sum to be the composition
\[
\boxplus: H(\C^\Z_\cc)\times H(\C^\Z_\cc)\stackrel{incl}{\hookrightarrow} H(\C^\Z_\cc\oplus \C^\Z_\cc)=H(\C^{\Z\sqcup\Z}_\cc)\stackrel{\tilde\rho_{sh}}{\to} H(\C^\Z_\cc),
\]
where the first map is the inclusion, and the second map is the map induced by the shuffle map $\rho_{sh}:\C_\cc^\Z\to \C_\cc^{\Z\sqcup \Z}$ from Definition \ref{Usum2} via $\tilde\rho_{sh}:H(\C^{\Z\sqcup\Z}_c)\to H(\C^\Z_c)$, $\tilde\rho_{sh}(\tilde h)= \rho_{sh}^{-1}\circ \tilde h\circ  \rho_{sh}$. Note that $\tilde\rho_{sh}$ indeed lands in $H(\C^\Z_\cc)$. Further, since the eigenvalues are preserved by $incl$ and $\tilde\rho_{sh}$, we see that $\B$ is preserved by $\boxplus$, inducing the desired map $\boxplus: (\B)\times(\B)\to \B$.
\end{defn}

\begin{rmk}\label{RMK:CS-additive}
With respect to the two operations we have $Ch(f \oplus g) = Ch(f) + Ch(g) = Ch(f \boxplus g) $ 
and $CS(f_t \oplus g_t) = CS(f_t) + CS(g_t) = CS(f_t \boxplus g_t) $, since in either case trace is additive. Moreover, for the path composition $g_t * h_t$, we have $CS(f_t * g_t) = CS(f_t) + CS(g_t)$.
\end{rmk}

We will first show in Lemma \ref{lem:CSzero} that restricted to any plot, the operation $\oplus$ is abelian up to a path that has vanishing $CS$-form, and also that each plot $P: M \to \B$ has an additive inverse $P^\perp: M \to \B$, again up to a path that has vanishing $CS$-form. Then, in Lemma \ref{thm:CSequivUsums}, we use similar techniques to show the operations $\oplus$ and $\boxplus$ are in fact homotopic, and even more, CS-equivalent, and so introduce the same operation on $\hat K^0(M)$. It will be useful for calculations and propositions that follows to have both operations. 
 
\begin{lem} \label{lem:CSzero}
For any $P, Q \in Map(M, BU \times \Z )$, there exists a path $\Gamma_t \in Map(M \times I, \B )$ such that 
\[ 
\Gamma_0 = P \oplus Q, \quad \quad \Gamma_1 = Q \oplus P, \quad \textrm{and} \quad  CS(\Gamma_t) =0.
\]
Also, for any $P \in Map(M, BU \times \Z)$, there exists a path $\Gamma_t \in Map(M \times I, \B  )$ such that 
\[ 
\Gamma_0 = P \oplus P^{\perp}, \quad \quad \Gamma_1 = I_0, \quad \textrm{and} \quad  CS(\Gamma_t) =0.
\]
Here $P^{\perp}$ is defined by 
\[
P^{\perp} = proj_{\C_{-\infty}^{-q} \oplus s_{-p-q} (V^\perp)}
\]
where $Im(P) = \C_{-\infty}^p \oplus V$ with $V \subset \C_p^q$, and $V^\perp\subset \C_p^q$ is the orthogonal complement of $V$ in $\C_p^q$, and $s_\ell$ denotes a ``shift'' by $\ell$ as in Definition \ref{rmk;BUsum}.
 \end{lem}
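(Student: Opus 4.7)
The plan is to produce, in both cases, an explicit path $\Gamma_t = U_t\,\pi_0\,U_t^{-1}$ obtained by conjugating the initial projection $\pi_0$ by a smooth family of unitaries $U_t$, verify the endpoints, and then show the Chern-Simons integrand vanishes pointwise. First I would establish a preparatory algebraic identity: for any such family, with $X = (\partial_t U_t)\,U_t^{-1}$, repeated use of cyclic trace, the anti-commutation $(d\Gamma_t)(I-2\Gamma_t) = -(I-2\Gamma_t)(d\Gamma_t)$, and the relation $\Gamma_t (d\Gamma_t)^{2n+1} + (d\Gamma_t)^{2n+1}\Gamma_t = (d\Gamma_t)^{2n+1}$ (itself a consequence of $\{\Gamma_t,\,d\Gamma_t\} = d\Gamma_t$) combine to give
\[
\Tr\bigl((I-2\Gamma_t)[X,\Gamma_t](d\Gamma_t)^{2n+1}\bigr) \;=\; \Tr\bigl(X\,(d\Gamma_t)^{2n+1}\bigr).
\]
Since $\partial_t \Gamma_t = [X,\Gamma_t]$, this reduces the vanishing of $CS(\Gamma_t)$ to showing $\Tr(X\,(d\Gamma_t)^{2n+1}) = 0$ for every $n \geq 0$.

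For the commutativity claim, I may assume $P,Q \colon M \to (BU\times\Z)_{-r}^r$, so that inside $P\oplus Q$ the non-trivial parts of $P$ and $Q$ occupy two adjacent blocks of width $2r$ in $\C_{-2r}^{2r}$, making $P\oplus Q$ block-diagonal there. Take $U_t$ to be the two-block rotation by angle $t\pi/2$ between these blocks, extended by the identity and independent of $x \in M$; a direct computation gives $\Gamma_0 = P\oplus Q$ and $\Gamma_1 = Q\oplus P$. The generator $X = \frac{\pi}{2}J$ (with $J$ the rotation generator on the two blocks) is off-block-diagonal and commutes with $U_t$, while $d\Gamma_t = U_t\,d(P\oplus Q)\,U_t^{-1}$ remains block-diagonal. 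The identity above then collapses the integrand to $\Tr\bigl(X\,(d(P\oplus Q))^{2n+1}\bigr)$, the trace of an off-block-diagonal matrix, hence zero.

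For the inverse claim, again assume $P\colon M\to (BU\times\Z)_{-r}^r$ and write $Im(P(x)) = \C_{-\infty}^{-r}\oplus V(x)$. Then $P\oplus P^\perp$ has image $\C_{-\infty}^{-2r}\oplus s_{-r}(V)\oplus s_r(V^\perp)$, while $I_0$ has image $\C_{-\infty}^{0} = \C_{-\infty}^{-2r}\oplus s_{-r}(V)\oplus s_{-r}(V^\perp)$. Relative to the $x$-dependent decomposition $\C_{-2r}^{2r} = s_{-r}(V)\oplus s_{-r}(V^\perp)\oplus s_r(V)\oplus s_r(V^\perp)$, define $\tilde\psi_t$ as the identity on the two $V$-summands and as a rotation by angle $t\pi/2$ within the plane $s_{-r}(V^\perp)\oplus s_r(V^\perp)$, using the shift $S = s_{2r}$ to identify $s_{-r}(V^\perp)\cong s_r(V^\perp)$. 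A direct verification yields $\Gamma_0 = P\oplus P^\perp$ and $\Gamma_1 = I_0$.

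To conclude $CS(\Gamma_t)=0$, equip the four summands with a $\{V,V^\perp\}$-coloring (the two shifted copies of $V$ are $V$-colored; the two shifted copies of $V^\perp$ are $V^\perp$-colored). Then $X$, whose only non-zero block is $V^\perp\!\leftrightarrow\!V^\perp$, is color-diagonal; $\Gamma_t$ is color-diagonal, since its only non-zero blocks are $V$-$V$ and $V^\perp$-$V^\perp$; and the connection $1$-form $\alpha = d\Phi\,\Phi^{-1}$ for the adapted frame has a duplicated block structure, because the $s_{-r}$- and $s_r$-shifted copies of a chosen frame for $V$ (and for $V^\perp$) vary identically with $x$. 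A block-by-block computation using these features shows $d\Gamma_t = [\alpha,\Gamma_t]$ has non-zero entries only in $V$-$V^\perp$ positions, i.e., $d\Gamma_t$ is color-\emph{off}-diagonal. Since odd powers of a color-off-diagonal matrix remain color-off-diagonal, the product $X\cdot(d\Gamma_t)^{2n+1}$ is color-off-diagonal, so its trace vanishes. The main obstacle is this last step: the color-off-diagonality of $d\Gamma_t$ is somewhat delicate bookkeeping, since both $\alpha$ and $\Gamma_t$ have mixing components, and it is the specific duplicated structure of $\alpha$ interacting with the mixing terms of $\Gamma_t$ that produces the full cancellation.
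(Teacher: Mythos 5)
Your proposal is correct, and for the first claim it is essentially the paper's argument: conjugation by the $x$-independent rotation, with the observation that the integrand collapses to the trace of the off-block-diagonal constant $X=\tfrac{\pi}{2}J$ times the block-diagonal $(d(P\oplus Q))^{2n+1}$.

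For the second claim the route is presented differently, though the resulting path coincides with the paper's up to the sign of the rotation. The paper writes $S_t = X(t)\,G\,X(t)^{-1}H$ (\emph{not} a conjugation of $S_0$) and directly computes $\mathrm{Tr}\bigl((I-2S_t)S_t'(dS_t)^{2n+1}\bigr)$ using the explicit matrix form $dS_t = \begin{psmallmatrix}\cos^2 & -\cos\sin\\ -\cos\sin & -\cos^2\end{psmallmatrix}dA$, which is a $2\times 2$ coefficient tensored with $dA$. You instead realize the same family as a genuine conjugation $\Gamma_t = U_t(x)\,\pi_0(x)\,U_t(x)^{-1}$ with an $x$-dependent $U_t$ built from the frame $\Phi(x)$ for $V(x)\oplus V^\perp(x)$, push everything through a clean preparatory trace identity reducing the integrand to $\mathrm{Tr}(X\,(d\Gamma_t)^{2n+1})$, and finish by a $V/V^\perp$-coloring (eigenspace-parity) argument. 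The crux, as you note, is that $d\Gamma_t = [\alpha,\Gamma_t]$ is color-off-diagonal: the point is that under the identification $s_{2r}\colon s_{-r}(V^\perp)\cong s_{r}(V^\perp)$ the diagonal part $\alpha_D$ has the ``duplicated'' form $\beta\otimes I_2$ while $\Gamma_t$ restricted to the $V^\perp$-$V^\perp$ sector has the form $I\otimes M_t$, so $[\alpha_D,\Gamma_t]=0$ and only the color-mixing commutator survives. This is correct, and it is really the invariant content behind the paper's explicit formula for $dS_t$. What the repackaging buys: the preparatory identity and the coloring argument are reusable and conceptually transparent (vanishing of $CS$ for conjugation families reduces to an eigenspace-parity check); what the paper's direct computation buys: it is self-contained, makes no appeal to a local adapted frame, and handles the path by a single short chain of matrix identities. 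One small point worth making explicit in your write-up: the adapted frame $\Phi(x)$ generally exists only locally on $M$ (the bundle $V$ need not be trivial), but since $CS(\Gamma_t)$ is a pointwise differential form, local vanishing suffices.
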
 
 
 \begin{proof}
For the first statement, we assume that for some integers $m, n, r, s \in \Z$, the maps $P$ and $Q$ have images $Im(P) = \C_{-\infty}^{m} \oplus V$, where $V \subset \C_{m}^n$, and  $Im(Q)  = \C_{-\infty}^{r} \oplus W$, where $W \subset \C_{r}^s$. By picking $p>0$ large enough, we may assume without loss of generality, that $m=r=-p$ and $n=s=p$. We  construct a path of the form $\Gamma_t = id_{\C_{-\infty}^{-2p}} \oplus S_t \oplus 0|_{\C_{2p}^\infty}$, such that $\Gamma_t$ has vanishing $CS$-form and the path $\Gamma_t$ equals $P\oplus Q$ and $Q\oplus P$ at the two endpoints.

To simplify the notation we'll define our paths on the interval $[0,\pi/2]$, which can always be reparametrized to be a path on $I = [0,1]$, with the same properties as stated above.  We will use $X(t) :  \C_{-2p}^{2p} \to \C_{-2p}^{2p}$, defined by
\[
X(t) = 
\begin{bmatrix}
\cos t & \sin t  \\
-\sin t   & \cos t 
\end{bmatrix},
\]
where this block matrix acts on $ \C_{-2p}^{2p} =  \C_{-2p}^0 \oplus  \C_{0}^{2p}$. Furthermore, we denote by $A$ the $2p\times 2p$-matrix representing the map $P|_{\C_{-p}^p}:\C_{-p}^p\to \C_{-p}^p$, and by $B$ the $2p\times 2p$-matrix representing $Q|_{\C_{-p}^p}:\C_{-p}^p\to \C_{-p}^p$, and we use these to define the map $F:\C_{-2p}^{2p}\to \C_{-2p}^{2p}$,
\[ F = s_{-p}(P|_{\C_{-p}^p})\oplus s_{p}(Q|_{\C_{-p}^p})=\begin{bmatrix} A & 0 \\ 0 & B \end{bmatrix}. 
\]
With this notation, consider the path $S_t\in Map(M\times [0,\pi/2], End( \C_{-2p}^{2p}))$, 
\[
S_t  = 
X(t)FX(t)^{-1}
\]
so that $S_0 = \begin{bmatrix} A & 0 \\ 0 & B \end{bmatrix}$ and $S_{\pi/2} = \begin{bmatrix} B & 0 \\ 0 & A \end{bmatrix}$. Using the fact that $\frac{ \partial}{ \partial t} (X(t)^{-1}) = -X(t)^{-1}X'(t) X(t)^{-1}$, we get
\begin{eqnarray*}
S_t' &=&  X'(t)FX(t)^{-1} - X(t)FX(t)^{-1}X'(t) X(t)^{-1}
\end{eqnarray*}
and we have that
\[
 (dS_t)^{2n+1} =\left( X(t)dFX(t)^{-1} \right)^{2n+1} = X(t)(dF)^{2n+1}X(t)^{-1}
\]
Using the explicit formula for $CS(\Gamma_t)=CS(S_t)$ from \eqref{eq:evenCS}, it suffices to show
\[
Tr\Big( (Id-2S_t)  S_t'  (d S_t)^{2n+1}\Big)= 0.
 \]
 First, we have
\begin{multline*}
Tr\Big(  S_t'  (d S_t)^{2n+1}\Big)\\ = Tr \Big(  X(t)' F(dF)^{2n+1} X(t)^{-1} - X(t)FX(t)^{-1} X'(t)(dF)^{2n+1}X(t)^{-1} \Big)  \\
= Tr \Big(  X(t)^{-1} X(t)' F(dF)^{2n+1}  - X(t)^{-1} X'(t)F^{\perp}(dF)^{2n+1} \Big),
 \end{multline*}
 where we used  $F(dF)^2=(dF) F^\perp (dF) = (dF)^2 F$ and the fact that trace is cyclic. Next, we compute
 \begin{multline*}
 Tr\Big((S_t)  S_t'  (d S_t)^{2n+1}\Big)= Tr\Big( FX(t)^{-1} X'(t) F (dF)^{2n+1} - F X(t)^{-1} X'(t) (dF)^{2n+1}\Big) \\
 =  Tr\Big( X(t)^{-1} X'(t) (-F^\perp) (dF)^{2n+1} \Big)
  \end{multline*}
  since 
   \begin{multline*}
  Tr \Big( F X(t)^{-1} X'(t) F (dF)^{2n+1}  \Big) = Tr\Big( X(t)^{-1} X'(t) F F^\perp (dF)^{2n+1} \Big) =0. 
 \end{multline*}
 Putting this together and using $F - F^\perp -2( -F^\perp) = Id$ we have 
\begin{multline*}
Tr\Big( (Id-2S_t)  S_t'  (d S_t)^{2n+1}\Big)= Tr \Big( X(t)^{-1} X'(t) (dF)^{2n+1} \Big) = 0,
 \end{multline*}
 since $ X(t)^{-1} X'(t) = \begin{bmatrix} 0 & Id \\ -Id & 0 \end{bmatrix} $, and $(dF)^{2n+1} $ is block diagonal.

For the second statement, write $Im(P) = \C_{-\infty}^{p} \oplus V$, where $V \subset \C_{p}^q$. Denote by $V^\perp\subset \C_p^q$ the orthogonal projection of $V$ in $\C_p^q$, and let $P^\perp$ be the orthogonal projection onto $\C_{-\infty}^{-q}\oplus s_{-p-q}(V^\perp)$. By Definition \ref{rmk;BUsum}, $P \oplus P^\perp$ is the projection onto $\C_{-\infty}^{p-q} \oplus s_{-q}(V) \oplus s_{-p}(V^\perp)$. Notice that $s_{-q}(V) \subset \C_{p-q}^0$ and $s_{-p}(V^\perp) \subset \C_0^{q-p}$. Similarly to the previous case, let $X(t):\C^{q-p}_{p-q}\to \C^{q-p}_{p-q}$ be the map
\[ X(t) = \begin{bmatrix}\cos t & \sin t  \\-\sin t   & \cos t \end{bmatrix}\]
regarded as an endomorphism of $\C_{p-q}^0\oplus \C_0^{q-p}$. We denote by $A$ the $(q-p)\times (q-p)$-matrix representing the operation $P|_{\C_p^q}:\C_p^q\to \C_p^q$, and by $A^\perp$ the $(q-p)\times (q-p)$-matrix representing $P^\perp |_{\C_{-q}^{-p}}:\C_{-q}^{-p}\to \C_{-q}^{-p}$. Denote by $G:\C_{p-q}^{q-p}\to \C_{p-q}^{q-p}$ and $H:\C_{p-q}^{q-p}\to \C_{p-q}^{q-p}$ the maps
\[
G = s_{-q}(P|_{\C_p^q})\oplus Id|_{\C_0^{q-p}}= \begin{bmatrix}
A & 0 \\
0 & Id
\end{bmatrix},
\quad \quad
H = Id|_{\C_{p-q}^0}\oplus s_{q}(P^\perp |_{\C_{-q}^{-p}}) =\begin{bmatrix}
Id & 0 \\
0 & A^{\perp}
\end{bmatrix}.
\]
Consider the path $S_t\in Map(M\times [0,\pi/2], End( \C_{p-q}^{q-p}))$, 
\[
S_t  = X(t) G X(t)^{-1} H
=\begin{bmatrix}
A + \sin^2(t)A^\perp & \cos (t) \sin (t) A^\perp \\
\cos(t) \sin (t) A^\perp & \cos^2(t) A^\perp
\end{bmatrix}.
\]
Then $S_0 = \begin{bmatrix}
A & 0 \\
0 & A^{\perp}
\end{bmatrix}$, and $S_{\pi/2} =  \begin{bmatrix}
Id & 0 \\
0 & 0
\end{bmatrix}$.
Also, $S_t$ is symmetric and $S_t^2 = S_t$, for each $t$,
so $\Gamma_t  :=  id_{\C_{-\infty}^{-p}} \oplus S_t \oplus 0$ satisfies $\Gamma_t :  M\times [0,\pi/2]\to \B$, and  
$\Gamma_0 = P \oplus P^\perp$ and $\Gamma_1 = I_0$. We then calculate
\[
dS_t =
\begin{bmatrix}
\cos^2(t)  & -\cos (t) \sin (t)  \\
- \cos(t) \sin (t)  & -\cos^2(t)
\end{bmatrix}
dA,
\quad \quad
(dS_t)^2 =
\begin{bmatrix}
\cos^2(t)  &  0 \\
0  & \cos^2(t) 
\end{bmatrix}
(dA)^2,
\]
and
\[
\frac{\partial}{ \partial t} S_t =
\begin{bmatrix}
2 \cos (t) \sin (t) A^\perp  &  (\cos^2 (t)- \sin^2 (t)) A^\perp \\
 (\cos^2(t)-  \sin^2 (t)) A^\perp   & -2 \cos(t) \sin (t) A^\perp
\end{bmatrix},
\]
and with this,
\begin{multline*}
(Id-2S_t)  S_t'  (d S_t)^{2n+1}
=(Id-2S_t)  S_t'  dS_t\cdot (d S_t)^{2n}\\
=
\begin{bmatrix}
-\sin(t)\cos(t)\cdot A^\perp & -\cos^2(t)  \cdot A^\perp \\
-\cos^2(t) \cdot A^\perp & \sin(t)\cos(t)  \cdot A^\perp
\end{bmatrix} \cdot \cos^{2n}(t) (dA)^{2n+1}.
\end{multline*}
Thus, we see that we have a vanishing trace,
\[
Tr((Id-2S_t)  S_t'  (d S_t)^{2n+1})=0.
\]
It therefore follows from equation \eqref{eq:evenCS} that $CS(\Gamma_t) = CS(S_t)=0$, which is the claim.
\end{proof}

We now show the operations $\oplus$ and $\boxplus$ are CS-equivalent.

\begin{lem}\label{thm:CSequivUsums}
Let $M$ be a smooth compact manifold.
\begin{enumerate} \item\label{transpositions-CS-equiv}
Let $k\neq \ell\in\Z$ and let $\tilde\tau: \B \to \B$ be the map induced by the isomorphism $\tau:\C_{-\infty}^\infty\to \C_{-\infty}^\infty$ which exchanges the $k^{th}$ and the $\ell^{th}$ basis vectors of $\C_{-\infty}^\infty$.
Then there is a map $\Gamma_t:\B\times I\to \B$, \emph{i.e.} $\Gamma:\B\to P(\B)$, such that $\Gamma(P)_0=P$ and $\Gamma(P)_1=\tilde\tau\circ P$, and $CS(\Gamma(P)_t)=0$. \\ In particular, any plot $g:M\to \B$ is CS-equivalent to $\tau\circ g$.
\item\label{item:oplus-vs-boxplus}
Let $P,Q: M \to \B$. There is a smooth homotopy $\Gamma_t : M \times I \to \B$ satisfying $\Gamma_0 = P \oplus Q$, $\Gamma_1 = P \boxplus Q$ and $CS(\Gamma_t) = 0$. \\ In particular, $P \oplus Q$ is $CS$-equivalent to $P \boxplus Q$,  so that 
\[
[P \oplus Q] =[P \boxplus Q] \in \hat K^0(M)
\]
\end{enumerate}
\end{lem}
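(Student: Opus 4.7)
The plan is to prove part \eqref{transpositions-CS-equiv} by constructing an explicit path of conjugations in $\B$ and computing the $CS$-form of that path, then to deduce part \eqref{item:oplus-vs-boxplus} by decomposing the permutation relating $\oplus$ and $\boxplus$ into elementary transpositions handled by part \eqref{transpositions-CS-equiv}.

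For part \eqref{transpositions-CS-equiv}, I would take a one-parameter family of unitaries $X(t) \in U$ with $X(0) = Id$ and such that conjugation by $X(1)$ implements $\tilde\tau$, and set $\Gamma(P)_t = X(t)^{-1} P X(t)$; this is a path in $\B$ from $P$ to $\tilde\tau(P)$, since conjugation by a unitary preserves self-adjointness, the spectrum $\{0,1\}$, and the finite-rank-difference-from-$I_0$ condition. A concrete candidate is the complex planar rotation $X(t) = \exp\bigl(i\tfrac{\pi t}{2} H\bigr)$ in the $(e_k, e_\ell)$-plane, where $H$ is the off-diagonal matrix with entries $1$ at the $(k,\ell)$ and $(\ell,k)$ positions, which satisfies $X(1) = i\tau$, so that the phase cancels in the conjugation. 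The task is then to show $CS(\Gamma(P)_t) = 0$ via formula \eqref{eq:evenCS}. Using $d\Gamma_t = X^{-1} dP\, X$ and $(d\Gamma_t)^m = X^{-1}(dP)^m X$, together with $P^2 = P$ and the derived identity $P\,dP\,P = 0$, along with cyclicity of the trace, the integrand $\mathrm{Tr}\bigl((Id - 2\Gamma_t)\,\partial_t\Gamma_t\,(d\Gamma_t)^{2n+1}\bigr)$ reduces, for each $n \geq 0$, to a single residual trace involving $X^{-1}X'$ paired against $(dP)^{2n+1}$. The main obstacle will be the pointwise vanishing of this residual: for a naive planar rotation it need not vanish, because $P$ is not block diagonal with respect to the $(e_k, e_\ell)$-plane. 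Overcoming this will require a cleverer construction --- for instance, a rotation that temporarily routes through an auxiliary basis vector so as to produce the requisite block-diagonal structure for $(dP)^{2n+1}$ against the off-block-diagonal generator $X^{-1}X'$, in direct analogy with the block-swap cancellation in the proof of the first part of Lemma \ref{lem:CSzero}.

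For part \eqref{item:oplus-vs-boxplus}, the key observation is that $P \oplus Q$ and $P \boxplus Q$ are projections onto subspaces of $\C_{-\infty}^\infty$ that agree up to a relabeling of finitely many basis vectors: choosing $p$ large enough that both $P$ and $Q$ are trivial outside $\C_{-p}^p$, the ambiguity between the two prescriptions collapses to a finite permutation $\sigma$ of the basis of the relevant finite-dimensional summand, satisfying $P \boxplus Q = \tilde\sigma(P \oplus Q)$. Writing $\sigma = \tau_1 \tau_2 \cdots \tau_N$ as a product of transpositions of adjacent basis vectors, I would apply part \eqref{transpositions-CS-equiv} to each $\tau_i$ to obtain $CS$-trivial homotopies $\Gamma^i$, and concatenate them in sequence. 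Since $CS$ is additive under concatenation of paths by Remark \ref{RMK:CS-additive}, the concatenated path $\Gamma^1 * \Gamma^2 * \cdots * \Gamma^N$ is itself $CS$-trivial, yielding the desired smooth homotopy from $P \oplus Q$ to $P \boxplus Q$ with vanishing $CS$-form. The equivalence $[P \oplus Q] = [P \boxplus Q] \in \hat K^0(M)$ then follows directly from Definition \ref{defn:CSequiveven}.
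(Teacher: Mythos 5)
Your suspicion about part \eqref{transpositions-CS-equiv} is well founded. The calculation you sketch reduces the integrand to $\Tr\bigl(X^{-1}X'\,(dP)^{2n+1}\bigr)$, and the analogous vanishing in Lemma \ref{lem:CSzero} came precisely from $(dF)^{2n+1}$ being block-diagonal against the off-block-diagonal generator $X^{-1}X'$. Here $P$ is arbitrary and the residual trace is in general nonzero: for instance a rank-one plot $\theta\mapsto\tfrac12\bigl(\begin{smallmatrix}1 & e^{-i\theta}\\ e^{i\theta}&1\end{smallmatrix}\bigr)$ on the $\C_k^{k+2}$-block gives $\Tr(X^{-1}X'\,dP)=i\cos\theta\,d\theta$. (The residual trace \emph{is} always exact, equal to $d\,\Tr\bigl(X^{-1}X'\,P\,(dP)^{2n}\bigr)$, so the conclusion that $P$ and $\tilde\tau(P)$ are $CS$-equivalent survives, but the stated identity $CS(\Gamma(P)_t)=0$ does not, and the paper's appeal to ``the same argument and calculation'' as Lemma \ref{lem:CSzero} is not valid as written.)

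Your auxiliary-vector fix is the correct way to restore literal vanishing, and you can carry it out as follows. For a plot $P:M\to\B$ with $M$ compact, pick $m$ so that the $m^{\mathrm{th}}$ row and column of $P(x)$ vanish identically (possible since $P$ lands in some $(\B)_{-N}^{N}$). When $X(t)$ is supported on $\mathrm{span}(e_i,e_j)$ and the $j^{\mathrm{th}}$ row and column of $P$ are zero, $\Tr(X^{-1}X'(dP)^{2n+1})$ involves only the $(i,j)$- and $(j,i)$-entries of $(dP)^{2n+1}$, and both vanish because every term of $(dP)^{2n+1}_{\,\cdot\,,j}$ ends in a factor from the zero $j^{\mathrm{th}}$ column of $dP$. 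Concatenating the three conjugation paths implementing $e_k\leftrightarrow e_m$, then $e_\ell\leftrightarrow e_k$, then $e_m\leftrightarrow e_\ell$ --- each leg keeps the emptied index empty for the next --- gives the transposition $\tau_{k,\ell}$ with literally zero $CS$-form. Note this produces a per-plot homotopy, not a single universal $\Gamma:\B\to P(\B)$ as the lemma's statement literally asks (the choice of $m$ depends on the plot), but the per-plot version is all that the ``in particular'' conclusion and part \eqref{item:oplus-vs-boxplus} use. Finally, your choice $X(t)=\exp(i\tfrac{\pi t}{2}H)$ is also needed for the endpoint: the paper's real $SO(2)$ rotation ends at $D\,\tilde\tau(P)\,D$ with $D=\mathrm{diag}(1,-1)$ on the swapped coordinates rather than at $\tilde\tau(P)$ itself, whereas with the complex rotation the phases cancel. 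With these refinements, part \eqref{item:oplus-vs-boxplus} goes through exactly as you and the paper both describe, via Remark \ref{RMK:CS-additive}.
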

\begin{proof}

First the first statement we may assume $\ell = k+1$ since any permutation can obtained by a composition of such transposition and $CS$ is additive with respect to compositions, \emph{c.f.} Remark \ref{RMK:CS-additive}. For this case we use the rotation family $X(t)$ given by $\begin{bmatrix} \cos(t)& \sin(t)\\ -\sin(t) & \cos(t) \end{bmatrix}$ on $\C_k^{k+2}$ and by the identity everywhere else (compare this with the proof of the first part of the previous Lemma \ref{lem:CSzero}). Setting $\Gamma(P)_t=X(t) P X(t)^{-1}$, the same argument and calculation as in the first part of the previous Lemma \ref{lem:CSzero} shows that $\Gamma(P)_t$ swaps the $k^{th}$ and $(k+1)^{st}$ basis vectors and has a vanishing $CS$-form.

Now, for the second claim, for fixed plots $P,Q$, we may find numbers $p, q, m, n$, such that $Im(P(x))=\C_{-\infty}^{p}\oplus V(x)$ with  $V(x)\subset \C_{p}^q$ for all $x\in M$, and $Im(Q(x))=\C_{-\infty}^{m}\oplus W(x)$ with $W(x)\subset \C_{m}^n$ for all $x\in M$. Without loss of generality, we may assume that $p=m$ (by taking the minimum of $p$ and $m$) and $q=n$ (by taking the maximum of $q$ and $n$). Then $P(x) \oplus Q(x) = proj_{\C_{-\infty}^{2p}}+proj_{s_p(V(x))}+proj_{s_{q}(W(x))}$ has an image $Im(P(x) \oplus Q(x))=\C_{-\infty}^{2p}\oplus s_p(V(x))\oplus s_q(W(x))$. Thus, the direct sum $\oplus$ (for these plots) can be expressed as the composition
\[
\quad\quad\oplus: H(\C^\Z_\cc)\times H(\C^\Z_\cc)\stackrel{incl}{\hookrightarrow} H(\C^\Z_\cc\oplus \C^\Z_\cc)=H(\C^{\Z\sqcup\Z}_\cc)\stackrel{\tilde\rho}{\to} H(\C^\Z_\cc),
\]
where $\rho:\C_\cc^\Z\to \C_\cc^{\Z\sqcup\Z}$ is given by
\[
\hspace{1.3cm}\rho(e_k)=\left\{
\begin{array}{ll}
 \rho_{sh}(e_k), & \text{for }k<2p, \\
 e_{k-p}, \text{ in the first $\Z$ component} & \text{for } 2p\leq k< p+q, \\
 e'_{k-(p+q)}, \text{ in the second $\Z$ component} & \text{for } p+q\leq k< 2q, \\ 
 \rho_{sh}(e_k), & \text{for }  2q\leq k.
\end{array}\right.
\]
Therefore, $P(x)\oplus Q(x)$ and $P(x)\boxplus Q(x)\in H(\C^\Z_\cc)$ differ only  on $\C_{2p}^{2q}$, and there $P(x)\oplus Q(x)=\tilde\alpha (P(x)\boxplus Q(x))$ for some isomorphism $\alpha:\C_{2p}^{2q}\to \C_{2p}^{2q}$ which relabels the basis elements of $\C_{2p}^{2q}$. Each such isomorphism is a composition of transpositions as in part \eqref{transpositions-CS-equiv}, so that the result follows from the claim in \eqref{transpositions-CS-equiv}.
\end{proof}

\begin{cor}
The set of $CS$-equivalence classes of maps of $M$ into $BU \times \Z$ has an abelian group structure induced by $\oplus$ or $\boxplus$, with identity $[I_0]$, and so $M \to \hat K^0(M)$ defines a contravariant functor from compact manifolds to 
abelian groups.
\end{cor}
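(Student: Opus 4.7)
The plan is to verify well-definedness of $\oplus$ on $CS$-equivalence classes and then check the abelian group axioms, using three tools already in place: additivity of $CS$ under $\oplus$ (Remark \ref{RMK:CS-additive}), the explicit $CS$-null homotopies of Lemma \ref{lem:CSzero}, and the identification of the induced operations $[\oplus]$ and $[\boxplus]$ from Lemma \ref{thm:CSequivUsums}.

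First I would check that $\oplus$ descends to equivalence classes. Suppose $P_0$ and $P_1$ are $CS$-equivalent via a smooth homotopy $P_t$ with $CS(P_t)$ exact, and let $Q \colon M \to \B$ be another plot. By the compactness remark following Definition \ref{defnBU}, the integer choices $p, q, m, n$ of Definition \ref{rmk;BUsum} can be made uniformly over the compact parameter space $M \times I$, so that $P_t \oplus Q$ is a bona fide smooth homotopy in $\B$. Remark \ref{RMK:CS-additive} applied to the constant homotopy at $Q$ gives $CS(P_t \oplus Q) = CS(P_t) + 0 = CS(P_t)$, which is exact, so $P_0 \oplus Q \sim_{CS} P_1 \oplus Q$. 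A symmetric argument in the second slot completes the proof that $\oplus$ induces a well-defined binary operation on $\hat K^0(M)$.

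With this in hand, the group axioms are quick. Associativity of $[\oplus]$ is inherited from associativity of $\oplus$ on plots, noted in Definition \ref{rmk;BUsum}. For the identity, specializing Definition \ref{rmk;BUsum} to $Q = I_0$ (so $m = n = 0$ and $W = 0$) yields $P \oplus I_0 = P$ on the nose, and likewise $I_0 \oplus P = P$, so $[I_0]$ is a two-sided identity. Commutativity and inverses are precisely the content of Lemma \ref{lem:CSzero}: its first part supplies a homotopy from $P \oplus Q$ to $Q \oplus P$ with $CS \equiv 0$, hence $[P \oplus Q] = [Q \oplus P]$; its second part supplies a homotopy from $P \oplus P^\perp$ to $I_0$ with $CS \equiv 0$, exhibiting $[P^\perp]$ as the inverse of $[P]$. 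That $\boxplus$ induces the same abelian group structure follows from Lemma \ref{thm:CSequivUsums}\eqref{item:oplus-vs-boxplus}. Functoriality is then immediate from naturality: a smooth map $\varphi \colon M' \to M$ pulls $CS$-equivalent plots to $CS$-equivalent plots (since $\varphi^*$ commutes with $d$ and with pullback of the universal form $CS$) and is compatible with $\oplus$ pointwise, so the induced map $\varphi^* \colon \hat K^0(M) \to \hat K^0(M')$ is a group homomorphism.

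The one step requiring care is the first: although $\oplus$ as defined in \ref{rmk;BUsum} is discontinuous on $\B$ because the maximal $p$ and minimal $q$ jump, the filtration of $\B$ by the subspaces $(\B)_{-p}^p$ lets us choose $p, q, m, n$ uniformly on any compact parameter space, which is what allows $P_t \oplus Q$ to be a smooth homotopy in $\B$ and reduces well-definedness to the additive formula for $CS$.
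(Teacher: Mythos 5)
Your proof is correct and fills in exactly the implicit reasoning the paper relies on: the corollary is stated without proof, following directly from Remark \ref{RMK:CS-additive}, Lemma \ref{lem:CSzero}, and Lemma \ref{thm:CSequivUsums}, which you invoke in the same way. The verification that $P \oplus I_0 = P$ on the nose (with $m = n = 0$, $W = 0$) and the observation that uniform integer choices over the compact parameter space $M \times I$ make $P_t \oplus Q$ a genuine smooth homotopy are both right and worth spelling out as you do.

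One small imprecision: in the functoriality step you say $\varphi^*$ is ``compatible with $\oplus$ pointwise,'' but $\oplus$ is precisely \emph{not} a pointwise operation --- it depends on a global choice of integers $p,q,m,n$ adapted to the whole plot, which is the very reason the paper introduces $\boxplus$. The cleaner route, using machinery you already cite, is to note that $\boxplus$ is defined fiberwise via the fixed shuffle $\rho_{sh}$ and hence strictly natural, so $\varphi^*(P \boxplus Q) = \varphi^*(P) \boxplus \varphi^*(Q)$ on the nose; then Lemma \ref{thm:CSequivUsums}\eqref{item:oplus-vs-boxplus} converts this to the statement for $[\oplus]$. (Alternatively, one can observe that two different integer choices for $\oplus$ on the pulled-back plots differ by a basis permutation, which is $CS$-null by Lemma \ref{thm:CSequivUsums}\eqref{transpositions-CS-equiv}.) Either repair gives the claimed group homomorphism $\varphi^*\colon \hat K^0(M) \to \hat K^0(M')$.
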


We now show that the functor $\hat K^0$ admits the data of a differential extension.
By Theorem \ref{thm:uniqueness-of-diff-k-theory}, \emph{i.e.} \cite[Theorem 3.3]{BS3} of Bunke and Schick, it then follows that this model is isomorphic to any other model of even differential $K$-theory, via a unique natural isomorphism. 

\begin{defn}\label{DEF:even-diff-K-theory-R-a-maps}
Let $I:\hat K^0(M)\to K^0(M)$ denote the forgetful map which sends a $CS$-equivalence class of maps to its homotopy class. By equation \eqref{EQ:odd-CS-Stokes} we have a well defined map $R=Ch:\hat K^0(M) \to \Om^{\even}_{cl}(M)$, and by definition of the Chern character \eqref{EQ-Def:even-Ch} we have a commutative diagram
\[
\xymatrix{
 & K^{0}(M) \ar[rd]^{[Ch]} & \\
 \hat K^{0}(M) \ar[ru]^I \ar [rd]^{Ch} & &H^{\even}(M) \\
 & \Om^{\even}_{cl} (M) \ar[ru]_-{\textrm{deRham}} &
}
\]

The remaining data is given by a map  $a: \Om^{\odd}(M;\R) / Im(d) \to \hat K^{0}(M)$, constructed as using the ideas from \cite{SS}. To define the map $a$ we first construct an isomorphism
 \[
 \widehat{CS} : Ker(I) \to \left( \Om^{\textrm{odd} } (M) / Im(d) \right) / Im([Ch])
  \]
  where 
  \begin{multline*}
Ker(I) = \{ [P] | \, \textrm{ there is a path $P_t:M\times I \to \B$ such that $P_1 = P$} \\ \text{and $P_0= I_0:M\to \B$ is the constant map to $I_0$}\}.
\end{multline*}
 The map $\widehat{CS}$  is defined for $[P] \in  Ker(I) \subset \hat K^{0}(M)$ by choosing a (non-unique) $P_t: M \times I \to U$ with $P_1 = P$ and $P_0=I_0$, and letting
\[
\widehat{CS}([P]) = CS(P_t)   \quad  \in \left( \Om^\odd (M) / Im(d) \right) / Im([Ch]).
\]
We first show this map is well defined. For two different choices $P_t$ and $Q_t$ satisfying $P_1 = Q_1 = P$ and $P_0 = Q_0 = I_0$,  consider the composition $F_t = Q_{-t} * P_t : M \to \Om (\B)$. But $CS(P_t)-CS(Q_t)=CS(F_t)\in Im(Ch)$ mod exact forms by Theorem \ref{THM:3-statements}\eqref{statement-3}, and $Im(d)\subset Im(Ch)$ by Theorem \ref{THM:3-statements}\eqref{statement-1}. Thus, $CS(P_t)$ and $CS(Q_t)$ coincide modulo $Im(Ch)$, which shows that $CS$ is well defined. Next, the map $\widehat{CS}$ is onto, since $CS$ is onto by Theorem \ref{THM:3-statements}\eqref{statement-2}. 

Finally, we show the map $\widehat{CS}$ is one to one. Suppose $[P] \in Ker(I)$, and that  so that $\widehat{CS}([P])=0$, \emph{i.e.} that $CS(P_t) \in Im(Ch)$ for some choice of $P_t$ as above. Then $CS(P_t) = Ch(g)$ for some $g: M \to U$. 
By Theorem \ref{THM:3-statements}\eqref{statement-3} we have $Ch(g) = CS(Q_t)$,  modulo exact, for some
$Q_t : M \to \Om B U \times \Z$. Let $K_t = Q_{1-t} * P_t$ be the concatenation, which is a new homotopy from $P$ to the constant $I_0$, and satisfies $CS(K_t) = CS(P_t) - CS(Q_t) = Ch(g)- Ch(g) = 0$ modulo exact. Thus, $K_t$ is a $CS$-exact path from $P$ to $I_0$, so that $[P]=[I_0]=0$.

Using the map $\widehat{CS}$, we define the map $a$ as the composition of the projection $\pi$ with $\widehat{CS}^{-1}$,
 \[
\xymatrix{
a:\Om^{\textrm{odd} } (M) /  Im(d) \ar[r]^-{\pi} & \left( \Om^{\textrm{odd} } (M) /  Im(d) \right) / Im([Ch]) \ar[r]^-{\widehat{CS}^{-1}} & Ker(I) \subset \hat{K}^{0}(M)
}
\]
yielding the exact sequence 
\[
\xymatrix{
K^{-1}(M) \ar[r]^-{[Ch]} &  \Om^{\odd}(M;\R) / Im(d) \ar[r]^-{a} & \hat K^{0}(M) \ar[r]^{I} & K^0(M) \ar[r]^{\quad 0} & 0.
} 
\]

It remains to show this map satisfies $Ch \circ a = d$. This equation follows, since $d\circ \widehat{CS}=Ch$ as maps $Ker(I)\to \Om^{\even}_{cl}(M)$, which can be seen calculating for any $[P] \in Ker(I)$ and $P_t$ connecting $P_1=P$ with $P_0=I_0$,
\[
d (\widehat{CS}([P]))=d(CS(P_t))  \stackrel{\eqref{EQ:odd-CS-Stokes}}{=} Ch(P_1) - Ch(P_0) = Ch(P_1)=Ch(P)=Ch([P]).
\]
\end{defn}

\begin{rmk}
Note that the uniqueness of even differential $K$-theory does not require an $S^1$-integration, see \cite[Theorem 3.3]{BS3}. Since the definition of $\hat K^0(M)$ given here satisfies the axioms in Definition \ref{defn:diffext}, this is already isomorphic to any other model of even differential $K$-theory. For example, $\hat K^0(M)$ coincides with Simons and Sullivan's model $\hat K^0_{SS} (M)$ in \cite{SS}, given by the Grothendieck group of structured vector bundles (i.e. isomorphism classes of $CS$-equivalence classes of bundles with connection). There is a natural map between these two models, given by the ``pullback'': for $P : M \to BU \times \Z$ consider the following difference of structured  vector bundles
\[
P^* = (E_P, \nabla_P) - (\C^k, d)
\]
where the choice $(E_P, \nabla_P)$ is made as in Definition \ref{DEF:P-assoc-bundle}, and  
$k =  rank(E_P) - rank(P)$. Note this is well defined since $(E_P, \nabla_P)$ is well defined up to adding a trivial line bundle with trivial connection. In fact, this map induces an isomorphism, as we now explain.

Given a smooth map $f_t: M \times I \to BU \times Z$ such that $CS(f_t)$ is exact, consider a choice for the total pullback bundle $P^* = (E, \nabla) - (\C^k, d)$ over  $M \times I$ whose slice at time $t \in I$ is $(E_t, \nabla_t) - (\C^k, d)$.
Parallel transport $P_t: E_0 \to E_t$ provides an isomorphism between $(E, \nabla)$ and the
product bundle $E_0 \times I$, and a path of connections $P_t^* \nabla_t$ on $E_0$ such that 
 $CS(P_t^* \nabla_t) = CS(f_t)$ is exact. So, $(E_0, \nabla_0)$ and $(E_0, P_1^* \nabla_1) \cong (E_1,\nabla_1)$ are $CS$-equivalent, and the integer $k$ is time independent, which shows the pullback map is well defined.

Since any element of $\hat K^0_{SS} (M)$ has a representative of the form $\left( (E, \nabla), (\C^k, d) \right)$ for some $k$ (c.f. \cite{SS} (3.1)), the map is a surjective  by the the Narasimhan-Ramanan theorem \cite{NR}, and a group homomorphism by Lemma \ref{thm:CSequivUsums}.

Finally, the map is one to one, as we now sketch. 
Given two maps $f_0, f_1: M \to BU \times \Z$ whose pullbacks are equal in $\hat K^0_{SS}(M)$, we may as well  assume the second (trivial) summands
$\C^k$ of the pullbacks are equal, and even that they are zero, since $f_0 \cong f_0 \boxplus \C^k \boxplus (\C^k)^\perp$, and similarly for $f_1$. 
Then it suffices to consider $(E_0,\nabla_0)$ and $(E_1, \nabla_1)$ that represent the same isomorphism class of CS-equivalence class of bundles with connection. Using the isomorphism to transport both connections to $E_0$,  we have two connections, say $\nabla_0$ and $\nabla_1$ on $E_0$, and a path of connections $\nabla_t$ from  $\nabla_0$ to $\nabla_1$ such that $CS(\nabla_t)$ is exact. We can regard this as a bundle with connection on $M \times I$, and represent this as a map 
$G_t: M \times I \to BU \times \Z$ using the Narasimhan Ramanan theorem \cite{NR}, so that $CS(G_t)$ is exact, by assumption.
The pullback bundles via $G_0$ and $G_1$ may not equal $E_0$ and $E_1$ with their connections (respectively), but
since they are isomorphic, there are homotopies $F_t: M \times I \to BU \times \Z$ from $f_0$ to $G_0$,
and $H_t: M \times I \to BU \times \Z$ from $G_1$ to $f_1$ defined explicitly by families of rotations of coordinates, by the main theorem of \cite{Sc}. By an argument similar to Lemma \ref{lem:CSzero} above, for such homtopies given by rotations we have $CS(F_t) = CS( H_t) =0$. Therefore $F_t * G_t * H_t$ is a homotopy from $ E_0$ to $E_1$
and
\[
CS(F_t * G_t * H_t) = CS(F_t) +  CS(G_t) + CS( H_t) = CS(G_t)
\]
is exact, as desired. The details of this argument will be exploited in \cite{TWZ4}, to provide a means for classifying structured vector bundles.
\end{rmk}

\section{Constructing $S^1$-Integration}\label{SEC:S1-integration}

Our goal in this section is to construct an $S^1$-integration map (Definition \ref{defn:S^1int}) for the differential extension of $K$-theory defined in Section \ref{SEC:Diff-K-Theory}. By Theorem \ref{thm:uniqueness-of-diff-k-theory} (\emph{i.e.} \cite[Theorem 3.3]{BS3}) this  shows we have defined a model for differential $K$-theory. In Section \ref{SEC:odd-to-even} we discuss the even to odd part, while in Section \ref{SEC:even-to-odd} we discuss the odd to even part of the $S^1$ integration map.

Constructing the $S^1$ integration in these models is quite non-trivial, so we begin with a motivating discussion, focusing for concreteness on the integration map from the even to the odd part. Suppose that, for all compact manifolds with corners $M$, we had a natural map 
\[
i: Map( M \times S^1, BU \times \Z) \to Map (M, U)
\]
 that induces  the $S^1$-integration map 
$\int_{S^1} : K^{0}(M \times S^1) \to K^{-1}(M)$ in $K$-theory and makes the following diagram commute
\[
\xymatrix{
 Map( M \times S^1, BU \times \Z) \ar[r]^-{Ch}  \ar[d]^{i}&  \Om^{\textrm{even}}(M \times S^1)  \ar[d]^{\int_{S^1}}  \\
 Map(M,U)   \ar[r]^{Ch} &    \Om^{\textrm{odd}}(M).
 }
\]
By applying this diagram to the manifold $M \times I$ and integrating out the interval $I$ we obtain the commutative diagram
\[
\xymatrix{
 Map( M \times I \times  S^1, BU \times \Z) \ar[r]^-{Ch}  \ar[d]^{i}&  \Om^{\textrm{even}}(M \times I \times S^1)  \ar[d]^{\int_{S^1}}
   \ar[r]^{\int_{I}}
 &  \Om^{\textrm{odd}}(M \times S^1)  \ar[d]^{\int_{S^1}} \\
 Map(M \times I ,U)   \ar[r]^{Ch} &    \Om^{\textrm{odd}}(M \times I )  \ar[r]^{\int_{I}} & \Om^{\textrm{even}}(M) .
 }
\]
The composition of the top and bottom horizontal rows define the even and odd Chern-Simons forms, respectively.
The integration over $S^1$ is a chain map, since $S^1$ is closed.  So, if the Chern-Simons form for an element of 
$Map( M \times I \times  S^1, BU \times \Z)$ is exact, then so is the Chern-Simons form for the element
of $Map(M \times I ,U)$ induced by $i$. This shows there is an induced map $i : \hat K^{0}(M \times S^1) \to \hat K^{-1}(M)$ making the following diagram 
commute
\[
\xymatrix{
 \hat K^{0}(M \times S^1) \ar[r]^-{Ch}  \ar[d]^{i}&  \Om^{\textrm{even}}(M \times S^1)  \ar[d]^{\int_{S^1}}  \\
  \hat K^{-1}(M)   \ar[r]^{Ch} &    \Om^{\textrm{odd}}(M).
 }
\]

The maps $a: \Om^*(M) / Im(d) \to \hat K^{*+1}(M)$ induce maps $\Om^*(M) / Im(Ch) \to Ker(I) \subset \hat K^{*+1}(M)$ which are isomorphisms. In fact, in both the even and odd parts of the differential extensions given here, the map $a$ is defined to be the inverse of the map $\widehat{CS}:  Ker(I)  \to \Om^*(M) / Im(Ch) $  which is simply given by taking the Chern-Simons form. 
By the remarks above, the map induced by $i$ commutes with the Chern-Simons forms, so it follows that the diagram 
\[
\xymatrix{
\Om^{*+1}(M \times S^1) \ar[r]^{a} \ar[d]^{\int_{S^1}} & \hat K^{*}(M \times S^1) \ar[d]^{i}\\
\Om^{*}(M) \ar[r]^{a} & \hat K^{*+1}(M ) 
}
\]
commutes, and so we would have produced an $S^1$-integration map according to Definition \ref{defn:S^1int}.

Unfortunately, it is difficult to give natural maps which commute on the nose with these geometric representative for the Chern Character. The natural candidates for these two maps from Bott periodicity and homotopy theory do not make the diagrams commute on the nose, but rather commute only up to an exact differential form, compare \cite{FL}.
This exact error makes the previous argument fail at the first step, since integration over the interval is not a chain map.

The resolution is to study this exact error and use it to define a new integration map. We will show using methods of homotopies and associated transgression forms how to  correct such a situation, in the end yielding a bona fide $S^1$-integration map.

\subsection{Odd to Even}\label{SEC:odd-to-even}
Recall from section \ref{SEC:BUxZ-model} that $H$ is the subspace of hermitian operators $h$ on $\C_{-\infty}^{\infty}$ with eigenvalues in $[0,1]$ such that $h-I_0$ has finite rank (see page \pageref{DEF:H}). Recall furthermore from page \pageref{definition-of-E}, that we have the exponential map $\exp:H\to U$, $\exp(h)=e^{2\pi i h}=\sum_{n\geq 0}\frac {(2\pi i h)^n} {n!}$, and from Proposition \ref{PROP:McDuff}, that we denote by $E:BU\times\Z\to \Om U$ the exponential of the straight line to $I_0$, 
\[
E(P) :S^1\to U \quad \quad \textrm{given by} \quad \quad E(P)(t)=e^{2 \pi i (tP +(1-t)I_0)}.
\]
Recall from \eqref{EQ-Def:even-Ch} that $Ch\in \Om^{\even}(\B)$, and from \eqref{EQ-Def:even-CS} that $CS\in \Om^{\even}(PU)$, which we may restrict from the path space on $U$ to the based loop space of $U$, also denoted by $CS\in \Om^{\even}(\Om U)$.
The next lemma shows that $E^*CS$ equals $Ch$ on $\B$ modulo exact.
\begin{lem} \label{lem;dbeta}
There exists a form $\beta\in \Omega^{\odd}(BU\times \Z)$ such that
\begin{equation}\label{EQU:dbeta}
 d\beta=\E^*CS-Ch. 
\end{equation}
In other words, the following diagram commutes modulo $d\beta$ for all compact manifolds with corners.
\[
\xymatrix{
 Map(M \times S^1,U) \ar[r]^{Ch}  &  \Om^{\textrm{odd}}(M \times S^1) \ar[d]^{\int_{S^1}} \\
 Map(M, BU \times \Z)  \ar[u]^{\E^\sharp}  \ar[r]^-{Ch} &    \Om^{\textrm{even}}(M)
}
\]
\end{lem}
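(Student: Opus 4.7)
My approach is cohomological: I show that $\omega := E^*CS - Ch$ is a closed form on $\B$ representing the trivial de Rham class, and then extract a primitive by induction along the filtration $\{(\B)_{-p}^p\}_{p \ge 0}$. Closedness of $Ch$ is the standard Chern--Weil/Bianchi computation applied to the curvature $R = P(dP)^2$ of the universal connection $\nabla_P$ from Definition \ref{DEF:P-assoc-bundle} (alternatively, a direct manipulation using $P^2 = P$ and cyclicity of $\Tr$). Closedness of $E^*CS$ follows from \eqref{EQ:even-CS-Stokes}: the formula $d\,CS_\gamma = ev_1^*Ch - ev_0^*Ch$ on $PU$ restricts to zero on $\Om U$, since for a based loop both endpoint evaluations factor through the constant map to $I \in U$.

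\textbf{Cohomological equality.} Both $Ch$ and $E^*CS$ represent the Chern character of the tautological virtual bundle on $\B$. For $Ch$ this is Chern--Weil applied to $\nabla_P$. For $E^*CS$, the map $E$ realizes the Bott-periodicity homotopy equivalence $\B \simeq \Om U$ (Proposition \ref{PROP:McDuff}), which intertwines the transgressed Chern character on $\Om U$ (represented by $CS$) with the Chern character on $\B$. This identification is classical; it can be checked concretely in low degrees using the Duhamel formula for $\partial_t e^{2\pi i u(t)}$ with $u(t) = tP + (1-t)I_0$. An alternative, more explicit route to $\beta$ would be to apply Stokes on $I \times I$ to the two-parameter map $F(P,s,t) = e^{2\pi i(stP + (1-st)I_0)}$ and take $\beta := \int_{I \times I} F^*Ch$: the faces $\{s=0\}$ and $\{t=0\}$ contribute nothing since $F$ is constant at $I$ there, the face $\{s=1\}$ yields $E^*CS$, and the remaining face $\{t=1\}$ produces a form that one expects to match $Ch$ up to a further exact correction. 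I would attempt this explicit construction first, and fall back on the cohomological argument below if the residual comparison becomes unwieldy.

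\textbf{Primitives and the main obstacle.} Each stratum $(\B)_{-p}^p$ from \eqref{EQU:BUxZ-p-q} is a disjoint union of finite complex Grassmannians, whose cohomology is concentrated in even degrees; hence every closed odd form on it is exact, so $\omega|_{(\B)_{-p}^p}$ admits a primitive $\beta_p$. To assemble these into $\beta \in \Om^{\odd}(\B)$ I need a compatible sequence with $incl_p^*\beta_{p+1} = \beta_p$, constructed inductively: given $\beta_p$ and any primitive $\tilde\beta_{p+1}$ on the next stratum, the closed odd form $incl_p^*\tilde\beta_{p+1} - \beta_p$ on $(\B)_{-p}^p$ is exact, say $d\gamma_p$; smoothly extend $\gamma_p$ to $\tilde\gamma_p$ on $(\B)_{-(p+1)}^{p+1}$ via a tubular neighborhood of the smooth closed embedding $incl_p$ together with a bump function, and set $\beta_{p+1} := \tilde\beta_{p+1} - d\tilde\gamma_p$. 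The compatible extension across the filtration is the step I expect to require the most care, and is the main obstacle; the explicit double-transgression route sketched above would bypass this bookkeeping by producing $\beta$ directly.
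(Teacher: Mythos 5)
Your cohomological argument is correct and takes a genuinely different route from the paper's. The paper proves the lemma constructively: it builds $\beta$ as the locally finite telescoping sum $\beta = \sum_{k\le 0}(\exp\circ h_k)^* H$, where $H$ is the two-parameter transgression form from \cite[Propositions 3.2, 3.4]{TWZ3} and $h_k(P)(s,t)=tP+(1-t)(sI_k+(1-s)I_{k-1})$; the identity $d\beta = E^*CS - Ch$ then follows by telescoping together with the computation $(\exp\circ\gamma_{-j})^*CS = Ch + j$ (a constant) from \cite[Theorem 3.5]{TWZ3}. Your argument instead shows $E^*CS-Ch$ is closed, observes that each $(\B)_{-p}^p$ is a disjoint union of compact complex Grassmannians so that $H^{\odd}((\B)_{-p}^p;\R)=0$ and every closed odd form there is exact, and then patches the primitives into a compatible sequence by induction using tubular-neighborhood extensions. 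Both are valid; the paper's $\beta$ is an explicit universal form while yours is only an existence result, but every downstream use of $\beta$ in the paper (Lemmas \ref{LEM:odd-well-def}, \ref{LEM:I-group-homo}, \ref{lem;Chcommutes}, \ref{LEM:odd-I-commute-with-a}) invokes only the defining identity $d\beta=E^*CS-Ch$, so either choice suffices. One small redundancy in your write-up: once you invoke the vanishing of $H^{\odd}$ on the Grassmannian strata, exactness of the closed form $E^*CS-Ch$ on each stratum is automatic, so the preliminary paragraph comparing $[E^*CS]$ and $[Ch]$ via Bott periodicity is not needed.

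Your proposed explicit alternative does, however, fail as stated. For $F(P,s,t)=e^{2\pi i(stP+(1-st)I_0)}$ the face $\{t=1\}$ is $F(P,s,1)=e^{2\pi i(sP+(1-s)I_0)}=E(P)(s)$, identical (up to relabeling the free parameter) to the face $\{s=1\}$; the faces $\{s=0\}$ and $\{t=0\}$ are constant. Stokes on $I\times I$ therefore gives $d\bigl(\int_{I\times I}F^*Ch\bigr)=\pm\bigl(E^*CS-E^*CS\bigr)=0$, yielding no relation to $Ch$ at all. This is precisely the difficulty the paper's construction is designed to overcome: the single straight-line homotopy from $I_0$ is insufficient, and one needs the infinite family of squares $h_k$ interpolating between successive basepoints $I_{k-1}$ and $I_k$, whose telescoping sum replaces the transgression back to $Ch$. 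You flagged that you would abandon this route if it became unwieldy; it does, because the issue is not a ``residual exact correction'' but that the chosen square degenerates.
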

\begin{proof}
For $k\in \Z$, consider the maps $\gamma_k:BU\times \Z\to Map([0,1],H)$ given by taking the straight line path $\gamma_k(P)(t)=tP+(1-t)I_k$ from $I_k$ to $P$, where $I_k$ is the projection to $\C_{-\infty}^k$ as before. Also denote by $\rho_k:BU\times \Z\to Map([0,1],H)$ the constant map to the straight line path $\rho_k(P)(t)=tI_k+(1-t)I_{k-1}$ from $I_{k-1}$ to $I_{k}$. Furthermore, there is a map $h_k:BU\times \Z\to Map([0,1]\times [0,1],H)$  given by $h_k(P)(s,t)=tP+(1-t)(sI_k+(1-s)I_{k-1})$ such that the boundary consists of $\partial h_k(P)=\gamma_{k}(P)-\gamma_{k-1}(P)-const_P+\rho_k(P)$. Note that $\gamma_k$, $\rho_k$, and $h_k$ can be composed with $\exp:H\to U$, giving rise (by slight abuse of notation) to maps $\exp\circ \gamma_k:BU\times \Z\to Map([0,1],U)$, etc.

According to \cite[Propositions 3.4 and 3.2]{TWZ3}, one can construct a form $H\in \Omega^{\odd}(Map([0,1]\times [0,1],U))$ such that $dH=\partial_1^* CS-\partial_2^* CS-\partial_3^* CS+\partial_4^* CS$, where $\partial_i:Map([0,1]\times [0,1],U)\to Map([0,1],U)$ is induced by the $i$th boundary component of $[0,1]\times [0,1]$, and $CS\in \Omega^{\even}(Map([0,1],U))$ is the universal Chern-Simons form. The form $H$ is determined by its pullback under any map $g:M\to Maps([0,1]\times[0,1], U)$ to a manifold with corners $M$,  and given by 
\begin{equation}\label{EQU:g*(H)}
g^*(H)=\Tr\sum_{n\geq 1}\sum_{i\neq j} c_{n,i,j} \int_{0}^1 \int_0^1 \overbrace{(g^{-1}dg) \dots \underbrace{(g^{-1} \frac{\partial}{\partial t}g)}_{i^{th}}\dots \underbrace{(g^{-1}\frac{\partial}{\partial s}g)}_{j^{th}}\dots (g^{-1} dg)}^{2n+1\text{ terms}} dt ds,
\end{equation}
where $c_{n,i,j}$ are some constants. Note, that in the above situation, we have that 
\begin{eqnarray*}
(\exp\circ h_k)^*(dH) &=&(\exp\circ h_k)^*(\partial_1^*CS-\partial_2^*CS-\partial_3^*CS+\partial_4^*CS) \\
&=&(\exp\circ \gamma_k)^*CS-(\exp\circ\gamma_{k-1})^*CS+(\exp\circ \rho_k)^*CS. 
\end{eqnarray*}

We then define $\beta\in \Omega^{\odd}(BU\times \Z)$ by setting
\[ \beta=\sum_{k\leq 0} (\exp\circ h_k)^*H=(\exp\circ h_0)^*H+(\exp\circ h_{-1})^*H+(\exp\circ h_{-2})^*H+\dots \]
We claim that $\beta$ is a well defined odd form on $BU\times \Z$, that is, the infinite sum reduces to a finite sum whenever applied to tangent vectors $v_1,\dots v_\ell$ at some $P\in BU\times \Z$. Indeed, if we represent $v_1,\dots, v_\ell$ by a map $f:B(0)\to BU\times \Z$ from a compact ball $B(0)\subset \R^\ell$ centered at $0$ with $f(0)=P$ and $f_*(\frac{\partial}{\partial x^j})=v_j$, then the image of $f$ is contained in some subspace $\{P\in BU\times \Z\,|\, P|_{\C_{\infty}^r}=Id_{\C_{\infty}^r}\}$ for some $r$ (see page \pageref{EQU:BUxZ-p-q}). Now, for any $k<r$, we claim that 
\begin{equation}\label{EQU:hkH-finiteness}
(\exp\circ h_k\circ f)^*H=0,
\end{equation}
so that $\beta(v_1,\dots, v_\ell)=f^*\left(\sum_{k\leq 0} (\exp\circ h_k)^*H\right)(\frac{\partial}{\partial x^1},\dots, \frac{\partial}{\partial x^\ell})$ becomes a finite sum. To see \eqref{EQU:hkH-finiteness}, let $g=\exp\circ h_k\circ f:B(0)\to Map([0,1]\times [0,1],U)$, which is given by 
\begin{eqnarray*}
g(x)(s,t)&=&e^{2\pi i(tf(x)+(1-t)(sI_k+(1-s)I_{k-1}))}\\
&=&e^{2\pi itf(x)}\cdot e^{2\pi i(1-t)(sI_k+(1-s)I_{k-1})},
\end{eqnarray*}
where the last equality follows, since $f(x)$ commutes with $(sI_k+(1-s)I_{k-1})$ for $k<r$, as $f$ is the identity on $\C_{-\infty}^r$. If we decompose $\C_{-\infty}^\infty=\C_{-\infty}^k\oplus \C_{k+1}^\infty$, then we can see that $g(x)(s,t)$ preserves these subspaces for each $x\in B(0)$ and $s,t\in [0,1]$; that is, $g(x)(s,t)$ maps $\C_{-\infty}^k$ to $\C_{-\infty}^k$ and $\C_{k+1}^\infty$ to $\C_{k+1}^\infty$. Thus, $g^{-1}$, $dg$, $\frac{\partial}{\partial t}g$, and $\frac{\partial}{\partial s}g$ also preserve this decomposition. Finally note, that $g^{-1}\frac{\partial}{\partial s}g=2\pi i (1-t)(I_k-I_{k-1})$ which vanishes on $\C_{k+1}^\infty$, while $g^{-1}dg=e^{-2\pi itf(x)}d(e^{2\pi itf(x)})$ which vanishes on $\C_{-\infty}^k$. Thus, by \eqref{EQU:g*(H)}, we see that $g^*(H)=0$, which is the claim of \eqref{EQU:hkH-finiteness}.

It remains to check equation \eqref{EQU:dbeta}. We calculate $d\beta$ by evaluating the first $k$ terms in the expansion of $\beta$ as follows,
\begin{eqnarray*}
d\beta 
&=& d((\exp\circ h_0)^*H+(\exp\circ h_{-1})^*H+(\exp\circ h_{-2})^*H+\dots)\\
&=& (\exp\circ h_0)^*dH+(\exp\circ h_{-1})^*dH+(\exp\circ h_{-2})^*dH+\dots\\
&=& (\exp\circ \gamma_0)^*CS-(\exp\circ\gamma_{-1})^*CS+(\exp\circ \rho_0)^*CS\\
&&+(\exp\circ \gamma_{-1})^*CS-(\exp\circ\gamma_{-2})^*CS+(\exp\circ \rho_{-1})^*CS\\
&&+(\exp\circ \gamma_{-2})^*CS-(\exp\circ\gamma_{-3})^*CS+(\exp\circ \rho_{-2})^*CS+\dots\\
&&+(\exp\circ \gamma_{-(k-1)})^*CS-(\exp\circ\gamma_{-k})^*CS+(\exp\circ \rho_{-(k-1)})^*CS\\
&& +\sum_{j\geq k} (\exp\circ h_{-j})^*dH.
\end{eqnarray*}
Now, since $(\exp\circ \rho_j)(P)(t)=e^{2\pi i (tI_j+(1-t)I_{j-1})}$ is a constant map (independent of $P$) with winding number $1$, we see that $(\exp\circ\rho_j)^* CS=1$. Next, note that $\exp\circ \gamma_0(P)(t)=e^{2 \pi i (tP +(1-t)I_0)}=E(P)(t)$ so that the first term is $(\exp\circ \gamma_0)^*CS=E^*CS$, while the terms $(\exp\circ \gamma_{j})^*CS$ for $j<0$ cancel pairwise. Thus, for any $k\geq 0$, we have
\begin{equation*}
d\beta = E^*CS-(\exp\circ\gamma_{-k})^*CS+k +\sum_{j\geq k} (\exp\circ h_{-j})^*dH.
\end{equation*}
Now, the calculation from \cite[Theorem 3.5]{TWZ3} shows that for $k\geq 0$ sufficiently large (\emph{i.e.} when $P$ and $I_{-k}$ commute), we have for all $j\geq k$ that $(\exp\circ\gamma_{-j})^* CS = Ch+j$, where $j$ is a constant function. Thus, the terms $(\exp\circ h_{-j})^*dH=(\exp\circ \gamma_{-j})^*CS-(\exp\circ\gamma_{-(j+1)})^*CS+(\exp\circ \rho_{-j})^*CS$ vanish for all $j\geq k$. Using this together with $-(\exp\circ\gamma_{-k})^*CS+k=-Ch$, we obtain that $d\beta=E^*CS-Ch$, which is equation \eqref{EQU:dbeta}.
\end{proof}

We recall from \cite[Lemma 3.6]{TWZ3} that we may associate to each $g:M\to U^n_{-n}$ a map $\gamma_g:M\times I\to U^{2n}_{-2n}$, such that $\gamma_g(0)=g\oplus g^{-1}$ and $\gamma_g(1)=id$, and $CS(\gamma_g)=0$. The path $\gamma_g$ is essentially given as in Lemma \ref{lem:CSzero} using a $\sin$/$\cos$-matrix, but with $U$ instead of $\B$, but we repeat it here for completeness. For $g:M \to U^n_{-n}$ let $\gamma_g(t): M \times I \to U^{2n}_{-2n}$ be given by 
\[ \gamma_g(t) =  G X(t) H X(t)^{-1}, \] 
where (\emph{cf.} Lemma \ref{lem:CSzero})
\[
G = \begin{bmatrix}
g & 0 \\
0 & 1
\end{bmatrix}
\quad \quad
H = \begin{bmatrix}
1 & 0 \\
0 & g^{-1}
\end{bmatrix}
\quad \quad 
X(t) = 
\begin{bmatrix}
\cos (\pi t /2 ) & \sin (\pi t /2 ) \\
-\sin (\pi t /2 )  & \cos (\pi t /2 ) 
\end{bmatrix}
\]
so that $\gamma_g(0) = g \oplus g^{-1}$ and $\gamma_g(1) = id$\label{path:gamma_g}.
It is straightforward to check that for all $g,h:M \to U^n_{-n}$ mapping into the same components $U^n_{-n}$ of $U$, we obtain:
\begin{equation}\label{EQU:gamma-square-sum}
\gamma_{(g \boxplus h)} = \gamma_g \boxplus \gamma_h,
\end{equation}
where $g\boxplus h$ is given by a map $g\boxplus h:M\to U^{2n}_{-2n}$.

The obvious map from 
from the free loopspace $LU$ to the based loopspace $\Om U$, given by left $U$-action at the base point, does not preserve the $CS$ form. The following map, defined using $\boxplus$, does.

\begin{defn} \label{def;basedgt}
For any $g_t: M \to LU^n_{-n}\subset LU$ we define ${}_*g_t : M \to \Om U^{2n}_{-2n}\subset \Omega U$ by 
conjugating $g_t \oplus g_0^{-1}$ (under path composition ``$*$'') with the path $\gamma_{g_0}(t)$, i.e.
\[
{}_*g_t =  \gamma_{g_0}(t) * (g_t \oplus g_0^{-1})  * \overline{\gamma_{g_0}(t)},
\]
where $\overline\gamma$ denotes reversal of a path $\gamma$. The notation ${}_*g_t$ is meant to denote the \emph{based map} associated to the free loop $g_t$.
\end{defn}

We give some useful properties of this map. In particular, it is a monoid morphism $LU \to \Om U$ that preserves the CS-forms. 

\begin{lem} \label{lem;basedgt}
The based loop map induces a well defined map $LU \to \Om U$, denoted by $g_t \mapsto {}_*g_t$ as above, and satisfies
\[
CS( {}_*g_t ) = CS(g_t) \quad \textrm{and} \quad CS({}_*g_t  \oplus {}_* h_t) = CS({}_*(g_t \oplus h_t)).
\]
Furthermore, the map $g_t\mapsto {}_* g_t$ is a homomorphism with respect to the shuffle block sum, i.e. for $g_t, h_t:M\to LU^n_{-n}$, we have
\[
{}_*(g_t \boxplus h_t) = {}_*g_t \boxplus {}_* h_t.
\]
\end{lem}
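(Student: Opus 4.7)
My plan is to verify each of the four assertions by unpacking the definition of ${}_*g_t$ and repeatedly invoking the additivity of $CS$ under path concatenation, path reversal, block sum, and shuffle sum (Remark \ref{rmk;CHCSUadd}), together with the vanishing $CS(\gamma_g)=0$ from Lemma 3.6 of \cite{TWZ3}.

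For well-definedness, since $g_t$ is a free loop, $g_1=g_0$, so $g_t\oplus g_0^{-1}$ is a loop based at $g_0\oplus g_0^{-1}$, and $\gamma_{g_0}$ connects this basepoint to $id$; hence ${}_*g_t$ is a loop based at $id$, smooth in plots because each of its factors is. For the $CS$-preservation, I would expand by concatenation-additivity
\[
CS({}_*g_t) = CS(\gamma_{g_0}) + CS(g_t \oplus g_0^{-1}) + CS(\overline{\gamma_{g_0}}).
\]
The outer terms vanish ($CS(\gamma_{g_0})=0$ by construction, and $CS(\overline{\gamma_{g_0}})=-CS(\gamma_{g_0})=0$ by reparametrization), while the middle term reduces by $\oplus$-additivity to $CS(g_t)+CS(g_0^{-1})=CS(g_t)$, since the constant path $t\mapsto g_0^{-1}$ has vanishing time-derivative and so contributes nothing to $CS$. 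The $\oplus$-compatibility of $CS$ is then immediate by chaining:
\[
CS({}_*g_t \oplus {}_*h_t) = CS({}_*g_t)+CS({}_*h_t) = CS(g_t)+CS(h_t) = CS(g_t\oplus h_t) = CS({}_*(g_t\oplus h_t)).
\]

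The $\boxplus$-homomorphism ${}_*(g_t \boxplus h_t) = {}_*g_t \boxplus {}_*h_t$, which I expect to be the main obstacle, requires a pointwise equality of based loops in $U$. I would use \eqref{EQU:gamma-square-sum} to rewrite $\gamma_{g_0 \boxplus h_0} = \gamma_{g_0} \boxplus \gamma_{h_0}$, and then distribute $\boxplus$ across concatenation and reversal, which is valid because $\boxplus$ is a pointwise operation on loops. This reduces the identity to the purely algebraic compatibility
\[
(g_t \boxplus h_t) \oplus (g_0 \boxplus h_0)^{-1} = (g_t \oplus g_0^{-1}) \boxplus (h_t \oplus h_0^{-1})
\]
between the block sum and the shuffle sum. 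I would verify this by tracing how the shuffle permutation $\rho_{sh}$ interacts with the basis-shifts used in the block-sum $\oplus$, confirming that the two sides act identically on each basis vector of $\C^\Z_\cc$. This index bookkeeping is the core technical step; once settled, the desired equality of based loops follows on the nose.
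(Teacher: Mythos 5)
Your proposal is correct and follows the paper's own route: decompose $CS({}_*g_t)$ by concatenation-additivity, kill the conjugating legs via $CS(\gamma_{g_0})=0$ and kill $CS(g_0^{-1})$ since it is constant in the loop parameter, chain the $\oplus$-identity formally, and for the $\boxplus$-homomorphism distribute $\boxplus$ over concatenation and reversal, invoke \eqref{EQU:gamma-square-sum}, and reduce to matching the middle factor. The one place where the paper is slightly more structured than your sketch is the middle-factor identity: rather than a raw basis-by-basis index chase, the paper isolates two clean facts — the interchange law $(g_t \oplus h_t) \boxplus (k_t \oplus l_t) = (g_t\boxplus k_t)  \oplus ( h_t \boxplus l_t)$ for maps into $U^n_{-n}$, and the compatibility $(g \boxplus h)^{-1} = g^{-1} \boxplus h^{-1}$ (via conjugate transpose) — and combines them; your ``index bookkeeping'' step would in effect re-derive exactly these two facts, so the arguments are equivalent in content, but naming the interchange law makes the underlying structure (and the role of the even shift by $2n$ preserving parity) more transparent.
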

\begin{proof}
 According to Lemma 3.6 of \cite{TWZ3} we have $CS( \gamma_g(t)) = 0$ so the first result follows since  $CS(g_t \oplus g_0^{-1}) = CS(g_t) + CS(g_0^{-1}) = CS(g_t)$.  The second  statement now follows formally since
 \begin{eqnarray*}
 CS({}_*g_t  \oplus {}_* h_t) &=& CS({}_*g_t) + CS( {}_* h_t)=CS(g_t) + CS(h_t) \\ 
 &=& CS(g_t \oplus h_t))=CS({}_*(g_t \oplus h_t)).
 \end{eqnarray*}
The last statement follows using two facts. First, the operations $\boxplus$ and $\oplus$ satisfy the interchange law for maps $g_t, h_t, k_t, l_t:M\to LU^n_{-n}$, \emph{i.e.}
\[
(g_t \oplus h_t) \boxplus (k_t \oplus l_t) = (g_t\boxplus k_t)  \oplus ( h_t \boxplus l_t)
\]
and, secondly, since the shuffle sum preserves unitary matrices, and the inverse of a unitary matrix is the conjugate transpose, we have $(g \boxplus h)^{-1} = (g \boxplus h)^* = g^* \boxplus h^* = g^{-1} \boxplus h^{-1}$. Therefore, using \eqref{EQU:gamma-square-sum},
\begin{multline*}
{}_*(g_t \boxplus h_t) 
=
 \gamma_{(g \boxplus h)_0}(t) * ((g_t \boxplus h_t)  \oplus ((g \boxplus h)_0)^{-1})  * \overline{\gamma_{(g \boxplus h)_0}(t)}
\\
=
\Big(\gamma_{g_0}(t) \boxplus \gamma_{h_0}(t) \Big) * \Big( (g_t \oplus g_0^{-1})  \boxplus (h_t \oplus h_0^{-1}) \Big) * \Big( \overline{\gamma_{g_0}(t)}  \boxplus \overline{\gamma_{h_0}(t)} \Big)
= {}_*g_t \boxplus {}_* h_t.
\end{multline*}
 \end{proof}

 We are now ready to define the $S^1$-integration map.  We fix once and for all a (continuous) homotopy inverse to the Bott Periodicity map $E$, \emph{i.e.} a map\label{map:i} $i: \Om U \to BU \times \Z$, and a homotopy $F_r: \Om U \times I \to \Om U$ satisfying $F_0 = E \circ i$  and $F_1 = id$, and a homotopy $H_r: BU \times \Z \times I \to BU \times \Z$ such that $H_0 = i \circ E$ and $H_1 = id$. 
 
 It is worth noting that if we could choose $i$, $F_r$ and $H_r$ to be smooth, then the presentation here could be simplified considerably.\footnote{There is a  geometric  map from $\Om U$ to the space of Fredholm operators on a separable Hilbert space, extensively studied in \cite{PS}. It would be interesting and useful if this map can be used in the way mentioned above.} Such a smooth choice is 
 unknown to us at the time of this writing, nevertheless we'll overcome this by replacing our maps by certain smooth maps from $M$, and then showing our constructions are independent of the choices.

\begin{defn} \label{defn;S1int} We now define the map $\II: \hat K^{-1}(M \times S^1) \to \hat K^0 (M)$. Let $g_t:M\times S^1\to U$ be a representative for an element $[g_t]\in K^{-1}(M \times S^1)$, and denote by ${}_*g_t:M\to \Omega U$ the based map from Definition \ref{def;basedgt}. Composing this with the fixed (continuous) homotopy inverse $i:\Omega U\to BU\times \Z$ to the Bott Periodicity map $E:BU\times \Z\to \Omega U$ gives a (continuous) map $i \circ {}_*g_t:M\to  BU\times \Z$.

To define $\II$, we make two choices. First, choose a smooth map $h: M \to BU \times \Z$ which is homotopic to $i \circ {}_*g_t$, via some continuous homotopy $h_r: M \times I \to BU \times \Z$, so
that $h_0 = h$ and $h_1 = i \circ {}_*g_t$. Note that $( E \circ h_r ) * (F_r \circ {}_* g_t)$ is then a continuous homotopy from
$E \circ h$ to ${}_* g_t$.

Second, choose a smooth homotopy $k_r:M\times I\to \Omega U$ between the smooth maps $k_0=E\circ h$ and $k_1={}_*g_t$
\[
\xymatrix{
M  \ar[rr]^{{}_* g_t} \ar@/^2pc/[rrrr]^{h}& & \Om U \ar[rr]^{i} & & \ar@/^2pc/[ll]^{E} BU\times \Z
}
\]
With these choices, denote by $\eta\in \Om^{\textrm{odd}}(M)$ the form
\[
 \eta =  \int_r k_r^* CS + h^* \beta 
 \]
and define $\II(g_t)\in \hat K^0(M)$ by
\[
\II(g_t) = [h] + a(\eta),
\]
where $[h]$ indicates the $CS$-equivalence class of the smooth map $h: M \to BU \times \Z$, and $a: \Om^{\textrm{odd}}(M) / Im(d) \to \hat K^{0}(M)$ is the map from Definition \ref{DEF:even-diff-K-theory-R-a-maps}.
\end{defn}

\begin{rmk} \label{rmk;k_rHasNiceh_r}
We note that the choice of homotopy $h_r$ is not part of the data in the above definition. In fact, for any chosen $h$ homotopic to $i\circ {}_* g_t$, and any smooth homotopy $k_r:M\times I\to \Om U$ between $k_0=E\circ h$ and $k_1={}_* g_t$, there always exists a homotopy $h_r$ from $h_0=h$ to $h_1=i\circ {}_* g_t$ such that $k_r$ is homotopic to $( E \circ h_r ) * (F_r \circ {}_* g_t)$ relative to the endpoints $E\circ h$ and ${}_*g_t$.
To find the homotopy $h_r$ from $h$ to $i \circ {}_* g_t$ with the desired property, we first choose any homotopy $h'_r$ from $h'_0=h$ to $h'_1=i\circ {}_* g_t$. Then, for any homotopy $k_r$ from $k_0 = E \circ h$ to $k_1 = {}_* g_t$, the path composition $k_r * (F_{1-r} \circ {}_* g_t)*(E\circ h'_{1-r})$ is a homotopy from $E\circ h$ to itself, and it thus defines an element in the homotopy group $\pi_1((\Om U)^M,E\circ h)$. Since the induced map $E^M:(BU\times \Z)^M\to (\Om U)^M$ is a homotopy equivalence, there is an element $h''_r\in \pi_1((BU\times \Z)^M, h)$ such that $E^M(h''_r)=E\circ h''_r$ is homotopic to $k_r * (F_{1-r} \circ {}_* g_t)*(E\circ h'_{1-r})$ relative to the endpoints.
\[
\begin{pspicture}(-1,0)(4,3)
%\psgrid(0,0)(3,3)
\pspolygon(.4,.4)(2.6,.4)(2.6,2.6)(.4,2.6)(.4,.4)
\rput[r](0.2,0.2){$E\circ h$}
\rput[r](0.2,1.5){$E\circ h''_r$}
\rput[r](0.2,2.8){$E\circ h$}
\rput(1.5,0.1){$k_r$}
\rput[l](2.8,0.2){${}_*g_t$}
\rput(1.5,2.9){$E\circ h'_r$}
\rput[l](2.8,2.8){$E\circ i \circ {}_*g_t$}
\rput[l](2.8,1.5){$F_r\circ {}_*g_t$}
  \end{pspicture} 
\]
But this means in turn that $k_r$ is homotopic to $(E\circ h''_r)*(E\circ h'_r)*(F_r\circ {}_* g_t)$ relative to the endpoints. Choosing $h_r=h''_r*h'_r$, we get the desired map from $h_0=h$ to $h_1=i\circ {}_* g_t$, such that $k_r$ is homotopic to $( E \circ h_r ) * (F_r \circ {}_* g_t)$ relative to the endpoints.

\end{rmk}

In the following Lemmas \ref{LEM:odd-well-def}-\ref{LEM:odd-I-commute-with-a} we will show that $\II$ is well-defined, and satisfies all the properties of an $S^1$-integration from Definition \ref{defn:S^1int}.

\begin{lem}\label{LEM:odd-well-def}
The map $\II: \hat K^{-1}(M \times S^1) \to \hat K^0 (M)$ is well-defined.
\end{lem}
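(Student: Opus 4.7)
The well-definedness amounts to independence of the three choices in Definition~\ref{defn;S1int}: the smooth path $k_r$, the smooth representative $h$, and the $CS$-class representative of $g_t$. I will handle these in turn.

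\emph{Independence of $k_r$.} Two smooth paths $k_r, k'_r$ from $E\circ h$ to ${}_*g_t$ in $\Om U$ concatenate to a smooth loop $L := k_r * \overline{k'_r}: M\times S^1 \to \Om U$ based at $E\circ h$; unfolding both loop parameters gives an adjoint map $\tilde L: M\times S^1\times S^1 \to U$, and by the definition of $CS$ as a transgression of $Ch$,
\[
\int_r k_r^* CS - \int_r (k'_r)^* CS = \int_{S^1\times S^1} \tilde L^* Ch.
\]
The de~Rham class of this closed odd form on $M$ is the Chern character of the element of $K^{-1}(M)$ obtained by iterated $S^1$-integration of $[\tilde L]$, hence lies in $Im([Ch])$; so modulo $Im(d)$ the difference lies in $Ker(a)$, and since $[h]$ and $h^*\beta$ do not depend on $k_r$, the value $\II(g_t)$ is unchanged.

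\emph{Independence of $h$.} If $h,h':M\to \B$ are smooth and continuously homotopic to $i\circ{}_*g_t$, choose a smooth homotopy $h_s$ between them and a smooth $k_r$ from $E\circ h$ to ${}_*g_t$, and form $k'_r := (E\circ h_{1-s}) * k_r$, which by Remark~\ref{rmk;k_rHasNiceh_r} is admissible for $h'$. Pulling Lemma~\ref{lem;dbeta} ($d\beta = E^* CS - Ch$) back along $h_s$ and applying fiber-integration Stokes on $M\times I$ gives
\[
\int_s (E\circ h_s)^* CS \equiv CS(h_s) + (h')^*\beta - h^*\beta \pmod{Im(d)},
\]
from which $\eta' \equiv \eta - CS(h_s)$ modulo $Im(d)$. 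It therefore remains to verify
\[
[h'] - [h] = a(CS(h_s)) \quad\text{in } \hat K^0(M).
\]
Writing $[h']-[h] = [h'\oplus h^\perp] \in Ker(I)$ and using Lemma~\ref{lem:CSzero} to concatenate a $CS$-vanishing path from $I_0$ to $h\oplus h^\perp$ with $h_s\oplus h^\perp$ yields a path from $I_0$ to $h'\oplus h^\perp$ whose $CS$-form equals $CS(h_s)$; applying $\widehat{CS}^{-1}$ then gives the required identity.

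\emph{Independence of $g_t$.} If $g_t, g'_t$ are $CS$-equivalent via a smooth $g_{t,u}$ with $CS(g_{t,u}) \in \Om^{\even}(M\times S^1)$ exact, then ${}_*g_{t,u}$ is a smooth homotopy in $\Om U$ from ${}_*g_t$ to ${}_*g'_t$, and a common $h$ may be used. Setting $k'_r := k_r * {}_*g_{t,u}$,
\[
\eta - \eta' = -\int_u ({}_*g_{t,u})^* CS \equiv -\int_{S^1} CS(g_{t,u}) \pmod{Im(d)},
\]
where the second step uses the invariance of $CS$-forms under ${}_*(-)$ (Lemma~\ref{lem;basedgt}) applied parametrically in $u$. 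Since $CS(g_{t,u})$ is exact, so is its fiber integral over $S^1$, and hence $a(\eta) = a(\eta')$. The main difficulty is the middle step: the simultaneous changes in $[h]$ and in the correction form $\eta$ must cancel precisely in $\hat K^0(M)$, and this cancellation is achieved exactly because the form $\beta$ from Lemma~\ref{lem;dbeta} mediates the gap between $E^*CS$ and $Ch$ on $\B$.
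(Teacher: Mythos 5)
Your proof is correct, but it organizes the argument quite differently from the paper. The paper establishes independence of $h$ and $k_r$ \emph{simultaneously}: given two choices $(h^0,k^0_r)$ and $(h^1,k^1_r)$, it fills a triangle $T_1$ in $Map(M,\B)$ with vertices $h^0,h^1,i\circ{}_*g_t$ by a smooth homotopy $s_r$, applies $E$, further fills a triangle $T_2$ in $Map(M,\Om U)$ with vertices $E\circ h^0, E\circ h^1, {}_*g_t$, and then computes that the $CS$-form of the concatenated comparison path $s_r\boxplus(\rho_r\boxplus\gamma_{1-r})$ equals $d\iint T_2^*CS$ after using $d\beta = E^*CS-Ch$; the paths $\gamma_r,\rho_r$ from $I_0$ realizing $a(\eta^0)$ and $a(\eta^1)$ enter explicitly. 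You instead modularize into three separate independences. Your $k_r$-step is a genuinely different argument not made explicit in the paper: you glue $k_r*\overline{k'_r}$ into a torus map $\tilde L:M\times S^1\times S^1\to U$ and identify $\int_r k_r^*CS-\int_r (k'_r)^*CS = \int_{S^1\times S^1}\tilde L^*Ch$, then appeal to naturality of $Ch$ under iterated $S^1$-integration to place this in $Im([Ch])=Ker(a)$; this makes the cohomological input visible in a way the paper's Stokes-on-a-triangle argument does not. Your $h$-step reduces (thanks to the already-established $k_r$-independence) to a specific admissible $k'_r$, pulls $d\beta = E^*CS-Ch$ back along $h_s$, and verifies $[h']-[h]=a(CS(h_s))$ directly from the definition of $a=\widehat{CS}^{-1}\circ\pi$ together with the $CS$-vanishing path from Lemma \ref{lem:CSzero}; this is a cleaner, more local version of what the paper's $\boxplus$-path computation achieves. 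Your $g_t$-step is essentially the paper's. One small presentational remark: the citation of Remark \ref{rmk;k_rHasNiceh_r} in the $h$-step is not really needed once you have the $k_r$-step in hand, and your final sentence about ``the middle step'' reads as commentary rather than a step of the proof; but neither affects correctness.
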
 
\begin{proof}

We first show that $\II(g_t) = [h] + a(\eta)$ is independent of the choices of $h$ and the homotopy $k_r$ that are used to define $\II(g_t)$. Given $g_t: M \times S^1 \to U$, suppose we have two choices of smooth maps, $h^0$ and $h^1$, which are both continuously homotopic to $ i {}_*g_t$, via some homotopies $h^0_r$ and $h^1_r$, respectively. Then $h^0$ and $h^1$ are smooth and continuously homotopic via the continuous homotopy $h^0_r * h^1_{1-r}$. So we can choose a smooth homotopy $s_r:M\times I \to BU\times \Z$ between $h^0$ and $h^1$, which is a deformation of the continuous homotopy $h^0_r * h^1_{1-r}$. This means the triangle in Figure \ref{fig;Triangles} can be filled in by a some smooth homotopy $T_1$, and so $E \circ T_1$ is a homotopy between $E \circ s_r$ and $(E \circ h^0_r) * (E \circ h^1_{1-r})$, as indicated in Figure \ref{fig;Triangles}.

\begin{figure}[h]
\[
\begin{pspicture}(0,-.5)(11,3.5)
%\psgrid(0,0)(11,3.5)
\pspolygon(0,0)(3,1.5)(0,3)(0,0)
\rput[l](-0.4,0){$h^1$}
\rput[l](-.4,3){$h^0$}
\rput[l](3,1.5){$i {}_* g_t$}
 \rput[l](-.4,1.5){$s_r$}
 \rput[r](1.7,.4){$h^1_r$}
 \rput[r](1.7,2.6){$h^0_r$}
 \pspolygon(5,0)(11,1.5)(5,3)(5,0)
 \pspolygon(5,0)(8,1.5)(5,3)
  \pspolygon(8,1.5)(11,1.5)(8,1.5)
\rput[l](4,0){$E \circ h^1$}
\rput[l](4,3){$E \circ h^0$}
\rput[r](7.6,1.5){$E \circ i {}_* g_t$}
 \rput[l](4,1.5){$E \circ s_r$}
 \rput[l](5.35, 1){$E \circ h^1_r$}
 \rput[l](5.35, 2){$E \circ h^0_r$}
 \rput[l](8,2.5){$k_r^0$}
 \rput[l](8,0.5){$k_r^1$}
 \rput[l](8.3,1.7){$F_r \circ {}_* g_t$}
 \rput[l](11.1,1.5){${}_* g_t$}
  \end{pspicture} 
\]
\caption{Homotopies between  two possible choices. Triangle $T_1$ with vertices $h^0, h^1$, and $i {}_* g_t$, and 
triangle $T_2$ with vertices $E \circ h^0, E \circ h^1$, and ${}_* g_t$.}
\label{fig;Triangles}
\end{figure}
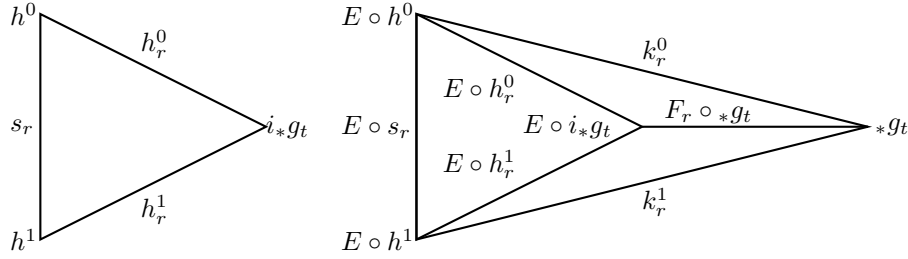

The homotopy $(E \circ h^0_r) * ( F_r \circ g_t)$ is a continuous homotopy from $E \circ h^0$ to ${}_* g_t$. We can deform this to a smooth homotopy $k^0_r$, and  similarly we can deform $(E \circ h^1_r) * ( F_r \circ g_t)$ to a smooth homotopy $k^1_r$.
This shows the second triangle $T_2$ in Figure \ref{fig;Triangles} can be filled in, and we may as well assume it is filled in by a smooth map, which we also denote by $T_2$.

 For $i=0,1$, let 
 \[
 \eta^i =  \int_r (k^i_r)^* CS + (h^i)^* \beta.
 \]
By definition of the ``$a$'' map (see Definition \ref{DEF:even-diff-K-theory-R-a-maps}), there is a path $\gamma_r : M \times I \to  BU \times \Z$ such that $a(\eta^0 ) = [\gamma_1]$,  $CS(\gamma_r) = \eta^0$, and $\gamma_0$ is the constant map to the identity. Similarly, there is a path $\rho_r : M \times I \to BU \times \Z$ such that $a(\eta^1 ) = [\rho_1]$, $CS(\rho_r) = \eta^1$, and $\rho_0$ is the constant map to the identity.

Then $s_r \boxplus (\rho_r \boxplus \gamma_{1-r})$ is a path from a representative of $[h^0 \boxplus (\rho_0 \boxplus \gamma_{1})]=[h^0] + a(\eta^0)$ to a representative of $[h^1 \boxplus (\rho_1 \boxplus \gamma_{0})]=[h^1] + a(\eta^1)$, and it suffices to show that the CS-form of this path is exact. Note that the CS form is given  by 
\begin{multline*}
CS(s_r \boxplus (\rho_r \boxplus \gamma_{1-r})) = CS(s_r) + CS(\rho_r) - CS(\gamma_r) 
=
\int_r s_r^* Ch + \eta^1 -\eta^0
\\
= \int_r s_r^* Ch  + \int_r (k^1_r)^* CS + (h^1)^* \beta -  \int_r (k^0_r)^* CS - (h^0)^* \beta.
\end{multline*}
Using the fact that $d \beta = E^* CS - Ch$ we have
\[
\int_r s_r^* Ch =   \int_r s_r^* E^* CS - \int_r s_r^* d \beta =  \int_r (E \circ s_r)^* CS + d\int_r s_r^*  \beta -  (h^1)^* \beta + (h^0)^* \beta.
\]
So that, modulo exact forms, 
\[
CS(s_r \boxplus (\rho_r \boxplus \gamma_{1-r})) =    \int_r (E \circ s_r)^* CS + \int_r (k^1_r)^* CS -  \int_r (k^0_r)^* CS =  d  \iint T_2^* CS
\]
where $T_2$ is the smooth homotopy filling the second triangle, as in Figure \ref{fig;Triangles}. This shows that $\II$ is independent of the choices made.

It remains to show that if $g_{t,0}$ and $g_{t,1}$ are $CS$-equivalent then $\II(g_{t,0}) = \II (g_{t,1})$. Let $g_{t,r} : M \times S^1 \times I \to U$ for $t \in S^1$, $r \in I$ be a smooth homotopy from $g_{t,0}$ to $g_{t,1}$ such that $CS(g_{t,r})$ is exact. By definition of the map $\II$ we must first choose smooth maps $h^0$ and $h^1$ which are homotopic to $i \circ {}_* g_{t,0}$ and $i \circ {}_* g_{t,1}$, respectively. Since $g_{t,0}$ and $g_{t,1}$ are homotopic via the smooth homotopy $g_{t,r}$ above, it follows that $i \circ {}_* g_{t,0}$ and $i \circ {}_* g_{t,1}$ are (continuously) homotopic via the homotopy $i \circ {}_* g_{t,r}$. Since we have already shown that $\II(g_t)$ is independent of the chosen representative $h$, we may as well choose the same map $h$ for both $g_{t,0}$ and $g_{t,1}$. Next, we must choose a smooth homotopy $k_{r,0}:M\times I\to \Om U$ from $E \circ h$ to $ {}_*  g_{t,0}$, and a smooth homotopy $k_{r,1}:M\times I\to \Om U$ from $E \circ h$ to $ {}_*  g_{t,1}$. If we choose any homotopy $k_{r,0}$ from $E \circ h$ to $ {}_*  g_{t,0}$, then we may pick $k_{r,1}$ to be the path composition $k_{r,1}=k_{r,0} * ({}_*g_{t,r})$, since we have already shown that $\II(g_{t,1})$ is independent of the choice for $k_{r,1}$.

Now, to show that $\II(g_{t,0}) = [h] + a(\eta^0)$ and $\II(g_{t,1}) =  [h] + a(\eta^1)$ are  equal, it suffices to show $\eta^0 = \eta^1$ mod exact, since the map $a$ vanishes on exact forms (\emph{cf.} Definition \ref{DEF:even-diff-K-theory-R-a-maps}). But for the chosen $k_{r,1}$, we get
\[
\eta^1 = \int_r (k_{r,1})^* CS + h^* \beta = \int_r (k_{r,0})^*CS + \int_r ({}_* g_{t,r})^* CS + h^* \beta  = \eta^0 + \int_r  ({}_* g_{t,r})^* CS.
\]
By Lemma \ref{lem;basedgt} we have 
\[
\int_r ({}_* g_{t,r})^* CS = \int_r g_{t,r}^* CS = \int_{ r\in I}   \int_{t \in S^1}  g_{t,r}^* Ch =  \int_{t \in S^1}  \int_{ r\in I} g_{t,r}^* Ch  = \int_{t \in S^1} CS(g_{t,r}).
\]
But $CS(g_{t,r})$ is exact by assumption and $d$ commutes with the integral over $S^1$ since $S^1$ is closed, showing that $\eta^1$ and $\eta^0$ only differ by an exact form.
 
This completes the proof that $\II$ is well defined.
\end{proof}

\begin{lem}\label{LEM:I-group-homo}
The map  $\II : \hat K^{-1}(M \times S^1) \to \hat K^0(M)$ is a group homomorphism.
\end{lem}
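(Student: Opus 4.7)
The plan is to exploit the well-definedness of $\II$ (Lemma \ref{LEM:odd-well-def}) by making compatible choices of representatives for $\II([g_t \boxplus h_t])$, $\II([g_t])$, and $\II([h_t])$.

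First I would observe that the Bott periodicity map $E: \B \to \Om U$ is in fact a \emph{strict} homomorphism with respect to $\boxplus$: since $I_0 \boxplus I_0 = I_0$ (the shuffle permutation sends negative-index basis vectors back to negative-index basis vectors) and since the shuffle conjugation commutes with the matrix exponential, a direct computation yields $E(P \boxplus Q) = E(P) \boxplus E(Q)$. It follows that the homotopy inverse $i: \Om U \to \B$ is a $\boxplus$-homomorphism up to homotopy. Combining this with Lemma \ref{lem;basedgt}, which gives ${}_*(g_t \boxplus h_t) = {}_*g_t \boxplus {}_*h_t$, I may choose the smooth representative for $\II([g_t \boxplus h_t])$ to be $h^{g\boxplus h} := h^g \boxplus h^h$ (where $h^g, h^h$ are the ones used for $\II([g_t])$ and $\II([h_t])$), and the smooth homotopy from $E\circ h^{g\boxplus h}$ to ${}_*(g_t\boxplus h_t)$ to be $k^{g\boxplus h}_r := k^g_r \boxplus k^h_r$.

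With these choices, the $\boxplus$-additivity of the odd $CS$-form on $\Om U$ (Remark \ref{rmk;CHCSUadd}) gives $\int_r (k^{g\boxplus h}_r)^* CS = \int_r (k^g_r)^* CS + \int_r (k^h_r)^* CS$, so the entire discrepancy between the correction forms reduces to the $\beta$-contribution:
\[
\eta^{g\boxplus h} - \eta^g - \eta^h \;=\; (h^g \boxplus h^h)^*\beta - (h^g)^*\beta - (h^h)^*\beta \;\in\; \Om^{\odd}(M).
\]
Since $d\beta = E^*CS - Ch$ with both $E^*CS$ and $Ch$ being $\boxplus$-additive (Remarks \ref{rmk;CHCSUadd} and \ref{RMK:CS-additive}), this form is \emph{closed}.

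The main obstacle is to show that this closed odd discrepancy lies in $Im(d) + Im([Ch])$, where $[Ch]: K^{-1}(M) \to \Om^{\odd}(M)/Im(d)$. Once that is done, $a$ annihilates it (since $Ker(a) = Im([Ch])$ modulo $Im(d)$ by Definition \ref{DEF:even-diff-K-theory-R-a-maps}), giving $a(\eta^{g\boxplus h}) = a(\eta^g) + a(\eta^h)$; together with the tautology $[h^g \boxplus h^h] = [h^g] + [h^h]$ in $\hat K^0(M)$ this yields $\II([g_t \boxplus h_t]) = \II([g_t]) + \II([h_t])$. To establish this last claim I would unpack the explicit definition $\beta = \sum_{k\leq 0}(\exp\circ h_k)^* H$ from Lemma \ref{lem;dbeta}: the $\boxplus$-discrepancy arises term by term from the fact that $I_k \boxplus I_k \neq I_k$ when $k \neq 0$, and each such mismatch can be written as the Chern--Simons transgression of a rotation homotopy in $U$ (analogous to the ones used in Lemmas \ref{lem:CSzero} and \ref{thm:CSequivUsums}), which is then identified with an odd-degree Chern form on $M$ pulled back from $U$ modulo exact.
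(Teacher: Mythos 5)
Your plan is essentially sound and takes a genuinely cleaner route than the paper's. The paper does \emph{not} make compatible choices of representatives: it picks $u$, $s$, $t$ independently (homotopic to $i\circ({}_*g_t\boxplus{}_*f_t)$, $i\circ{}_*g_t$, $i\circ{}_*f_t$ respectively), constructs an explicit homotopy $K(r)$ on $\Om U\times\Om U$ between $i\circ\boxplus$ and $\boxplus\circ(i\times i)$, uses it to build a homotopy $C(r)$ from $u$ to $s\boxplus t$, and then passes to a cubical cochain model via the deRham map so as to factor the full error cochain as $({}_*g_t\times{}_*f_t)^*(Y)$ for a universal cochain $Y$ on $\Om U\times\Om U$, which is shown to be closed and hence exact by $H^{\odd}(\Om U\times\Om U)=0$ (the paper credits this strategy to Upmeier). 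By instead invoking the well-definedness lemma to set $h^{g\boxplus h}:=h^g\boxplus h^h$ and $k^{g\boxplus h}_r:=k^g_r\boxplus k^h_r$, and using the strict identity $E(P\boxplus Q)=E(P)\boxplus E(Q)$ (which the paper itself records in the proof) together with ${}_*(g_t\boxplus h_t)={}_*g_t\boxplus{}_*h_t$, you collapse the entire discrepancy to the single term
\[
(h^g\boxplus h^h)^*\beta-(h^g)^*\beta-(h^h)^*\beta = (h^g,h^h)^*\bigl(\boxplus^*\beta - pr_1^*\beta - pr_2^*\beta\bigr),
\]
avoiding the cochain-level bookkeeping.

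Where your sketch is underpowered is the final step. The proposed term-by-term unpacking of $\beta=\sum_{k\le 0}(\exp\circ h_k)^*H$ and the claim that the $I_k\boxplus I_k\ne I_k$ mismatches assemble into odd Chern forms modulo exact is far from clear and would take real work to justify. You do not need it: you have already shown that the universal form $\boxplus^*\beta - pr_1^*\beta - pr_2^*\beta\in\Om^{\odd}(\B\times\B)$ is closed (since $d\beta=E^*CS-Ch$ and both $E^*CS$ and $Ch$ are $\boxplus$-additive). Since $H^{\odd}(\B\times\B)=0$ (as $H^*(BU)$ is concentrated in even degree), this universal form is exact, so its pullback under the smooth map $(h^g,h^h):M\to\B\times\B$ is exact, hence killed by $a$. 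This is precisely the same kind of odd-cohomology-vanishing argument the paper uses, merely applied on $\B\times\B$ rather than $\Om U\times\Om U$, and it makes your proposal complete without the speculative analysis of $\beta$. One small thing worth stating explicitly: the identity ${}_*(g_t\boxplus h_t)={}_*g_t\boxplus{}_*h_t$ from Lemma~\ref{lem;basedgt} requires $g_t,h_t$ to land in a common $LU^n_{-n}$, which is harmless since $M\times S^1$ is compact; and you should note that $\boxplus$ is smooth on plots so that $k^g_r\boxplus k^h_r$ is a legitimate smooth homotopy.
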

\begin{proof}
Recall that the group structure ``$+$'' on $\hat K^*(M)$ is induced on representatives by ``$\boxplus$'' from Definitions \ref{Usum2} and \ref{defn;BUsum}. Let $g_t$ and $f_t$ be representatives for elements in $ \hat K^{-1}(M \times S^1)$, and denote with the induced based maps ${}_* g_t, {}_* f_t : M \to \Om U$. We must show that 
\[
\II(g_t \boxplus f_t) = [u] + a( \eta^{l_r})
\]
is $CS$-equivalent to 
\[
\II(  g_t ) + \II(f_t) = [s] + [t] + a( \eta^{k_r} )+ a( \eta^{h_r} ).
\]
where $u$ is smooth and homotopic to $i \circ ({}_* g_t  \boxplus {}_* f_t)$ via some homotopy $u_r$, 
$s$ is smooth and homotopic to $i \circ {}_* g_t$  via some homotopy $s_r$, 
and $t$ is smooth and homotopic to $i \circ {}_* f_t$ via some homotopy $t_r$. Recall that
\begin{eqnarray*} 
\eta^{l_r} &=&  \int_r (l_r)^* CS + u^* \beta \\
\eta^{k_r} &=&  \int_r (k_r)^* CS + s^* \beta  \\
\eta^{h_r} &=&  \int_r (h_r)^* CS + t^* \beta 
\end{eqnarray*}
where $l_r, k_r, h_r$ are smooth and, by Remark \ref{rmk;k_rHasNiceh_r}, we may assume are homotopic to $(E \circ u_r) * (F_r \circ ({}_* g_t \boxplus {}_*f_t))$, 
$(E \circ s_r) * (F_r \circ {}_* g_t )$, and
$(E \circ t_r) * (F_r \circ {}_* f_t)$, respectively.

We choose representatives for $ a(\eta^{l_r})$, $a(\eta^{k_r})$ and $a(\eta^{h_r})
\in \hat K^0(M)$ and denote these by $\gamma_1$ , $\kappa _1$, and $\rho _1:M\to \B$, respectively. By definition of the ``$a$'' map (as the inverse of the CS form obtained by a path to the identity),  there are maps 
$\gamma_r , \kappa _r, \rho _r : M \times I \to BU \times \Z$ 
satisfying $\gamma_0 = \kappa_0 = \rho_0 = id$ and 
$CS(\gamma_r) = \eta^{l_r}$, $CS(\kappa_r) =  \eta^{k_r}$, and 
$CS(\rho_r) =  \eta^{h_r}$.

The proof will be an adaptation of an argument by Upmeier in \cite{U}.

Consider the following diagram
\[
\xymatrix{
M \ar[rd]_-{{}_* g_t \times  {}_*f_t}  \ar `r[rrr]^{u} `[dd][rrdd] \ar `d[rdd][rdd]_-{s \times t\hspace{.7in}} 
&&& \\
 & \Om U \times \Om U \ar[d]_-{i \times i} \ar[r]^-{\boxplus} & \Om U \ar[d]^i & \\  & (BU \times \Z) \times (BU \times \Z) \ar[r]^-\boxplus & BU \times \Z  &
}
\]
There is a homotopy $K:  \Om U \times \Om U  \times I \to BU \times \Z$ given by 
\[
K(r) = 
 \left\{ 
\begin{array}{ll} 
i( F_r \boxplus F_r ),   & \text{if } r \in [0,1/2]\\ 
 H_r ( i  \boxplus  i  ),  & \text{if } r \in[1/2,1] 
\end{array} \right.
\]
where $F_r: \Om U \times [0,1/2] \to \Om U$ is as before (suitably reparametrized for better readability), satisfying $F_0 = id$ and $F_{1/2}= E \circ i$, and $H : BU \times \Z \times [1/2,1] \to BU \times \Z$ satisfies $H_{1/2} = i \circ E$ and $H_1 = id$ so that $K(0) = i \circ  \boxplus  $ and $K(1) = i \boxplus i$. Note this is well defined at $r=1/2$ since $(A-cI_0)\boxplus(B-cI_0)=(A\boxplus B)-cI_0$ and $\exp(A\boxplus B)=\exp(A)\boxplus \exp(B)$, so that $E(P\boxplus Q)=E(P)\boxplus E(Q)$.

This implies there is a smooth homotopy between $u$ and $s \boxplus t$, since  $u$ and $s \boxplus t$ are homotopic via the continuous
homotopy 
\[
C(r)= u_{r} * (K(r) \circ ({}_* g_t \times {}_* f_t) ) * (s_{1-r} \boxplus t_{1-r}).
\]

For a choice of a smooth homotopy $\alpha_r$ from $u$ to $s \boxplus t$, we must show that the smooth homotopy $\alpha_r \boxplus (\gamma_{1-r}  \boxplus (\kappa_{r}  \boxplus \rho_{r}))$, from the  representative $u \boxplus  (\gamma_1\boxplus id)$ of $[u]+a(\eta^{l_r})$ to the representative $(s \boxplus t) \boxplus (id\boxplus(\kappa_1 \boxplus \rho_1))$ of $[s]+[t]+a(\eta^{k_r})+a(\eta^{h_r})$,  has an exact CS-form.
This CS-form is in fact equal to 
\[
CS(\alpha_r) - \eta^{l_r} + \eta^{k_r} + \eta^{h_r} = CS(\alpha_r)  -\int_r l_r^* CS - u^* \beta + \int_r k_r^* CS + s^* \beta  + \int_r h_r^* CS + t^* \beta 
\]
In order to show that this form is exact, it is enough to show that the form is exact thought of as a cocycle, via the deRham Theorem.

Choose a nice cochain model with a representative for integration along the interval $I$. For example we may take the cubical singular model, and then for a cochain $c$ we can represent integration along the interval by the slant product $c \mapsto c \setminus [I]$, where $[I]$ is the fundamental chain of the interval. Notice that the deRham map $\omega \mapsto  dR(\omega) = \int \omega$ commutes with the slant product and integration along the interval $I$, \emph{i.e.} $dR( \int_I  \omega) = dR(\omega) \setminus [I]$.

Now, for any choice of smooth $\alpha_r$ continuously homotopic to $C(r)$ (relative to the boundary smooth maps $u$ and $s \boxplus t$) we have that the cochains $dR(CS(\alpha_r))$ and $C(r)^*(dR(CS))$ differ by a coboundary. In fact, if $\alpha_{r,s}$ is a continuous relative homotopy from $\alpha_r$ to $C(r)$ fixing the maps at the boundary,  then integrating over $s \in I$ we have 
\[
\delta ( \alpha_{r,s}^* dR(CS) \setminus [I]) =  \alpha_{r,1}^* dR(CS) - \alpha_{r,0}^* dR(CS)=\alpha_{r}^* dR(CS) - C(r)^* dR(CS)
\]
since $\delta dR(CS) = 0$.
So, working modulo exact cochains, we can replace $CS(\alpha_r)$ by
\[
C(r)^*(dR(CS)) = dR(CS(u_r)) + (K(r) \circ ({}_* g_t \times {}_* f_t) )^*(dR(CS)) - dR(CS(s_{r} \boxplus t_{r}))
\]
and it suffices to show that 
\begin{multline*}
X =
u_r^*(dR(CS)) + (K(r) \circ ({}_* g_t \times {}_* f_t) )^*(dR(CS)) - s_{r} ^*(dR(CS))-t_{r}^*(dR(CS))
\\
+dR\left(-\int_r l_r^* CS - u^* \beta
+  \int_r k_r^* CS + s^* \beta  + \int_r h_r^* CS + t^* \beta \right)
\end{multline*}
is an exact cochain, where we used that $CS(s_r\boxplus t_r)=CS(s_r)+CS(t_r)$.
Now, adding to $X$ the exact cochains
\begin{eqnarray*}
-\delta \Big((u_r^*dR( \beta))\setminus[I]\Big) & = &  - (i \circ ({}_* g_t \boxplus {}_* f_t))^* dR(\beta) + u^*dR(\beta)  + u_r^*dR(E^* CS - Ch)\setminus [I]\\
\delta \Big((s_r^*dR( \beta))\setminus[I]\Big) & = &   (i \circ {}_* g_t )^* dR(\beta) - s^*dR(\beta)  -  s_r^*dR(E^* CS - Ch)\setminus[I]\\
\delta \Big((t_r^*dR( \beta))\setminus[I]\Big) & = &   (i \circ {}_* f_t )^* dR(\beta) - t^*dR(\beta)  -  t_r^*dR(E^* CS - Ch)\setminus[I]
\end{eqnarray*}
and using the relations 
\begin{eqnarray*}
-dR(\int_r l_r^* CS) &=& - \Big((E \circ u_r)^* dR(CS) \Big)\setminus[I]- \Big( (F_r \circ ( {}_* g_t \boxplus {}_*f_t) )^* dR(CS)\Big) \setminus[I] \\
 dR(\int_r k_r^* CS) &=& \Big( (E \circ s_r)^* dR(CS) \Big)\setminus[I] + \Big( (F_r \circ  {}_* g_t)  ^* dR(CS) \setminus[I] \Big)\\
dR (\int_r h_r^* CS) &=&\Big( (E \circ t_r)^* dR(CS) \Big) \setminus[I] + \Big((F_r \circ  {}_*f_t) ^* dR(CS)\Big) \setminus[I]
\end{eqnarray*}
which follow from our assumptions on  $l_r, k_r$ and  $h_r$, we obtain, modulo exact, that 
\begin{eqnarray*}
X 
&=& (K(r) \circ ({}_* g_t \times {}_* f_t) )^*(dR(CS))\\
&& - (i \circ ({}_* g_t \boxplus {}_* f_t))^* dR(\beta)
  + (i \circ {}_* g_t)^* dR( \beta) 
  + (i \circ {}_* f_t)^* dR(\beta) \\
  &&
  -\Big( (F_r \circ ( {}_* g_t \boxplus {}_*f_t) )^* dR(CS)\Big) \setminus[I]\\
  &&
  +\Big( (F_r \circ  {}_* g_t)  ^* dR(CS) \setminus[I] \Big)
  +\Big((F_r \circ  {}_*f_t) ^* dR(CS)\Big) \setminus[I].
\end{eqnarray*}
Notice that this can we written as $X= ({}_* g_t \times {}_* f_t)^* (Y)$ for $Y\in C^{\odd}(\Om U\times \Om U)$ defined to be
\begin{multline*}
Y = K(r)^*(dR(CS))  - (i \circ \boxplus ) ^* dR(\beta)    +
 (i \circ pr_1) ^* dR(\beta)  +  (i \circ pr_2)^* dR(\beta) \\
 - \Big((F_r \circ \boxplus)^* dR(CS) \Big)  \setminus[I]
 +\Big( (F_r \circ pr_1) ^* dR(CS) \Big)  \setminus[I]
 + \Big( (F_r \circ  pr_2) ^* dR(CS)\Big)  \setminus[I],
\end{multline*}
where $pr_1, pr_2: \Om U \times \Om U \to \Om U$ are the projections onto the first and second factors, respectively. To check that $X$ is exact it suffices to show that $Y$ is exact, since the pullback $({}_* g_t \times {}_* f_t)^*$ preserves exactness. Now, since $H^{\odd}(\Om U \times \Om U) = 0$, it suffices to show that $Y$ is closed. We check this using the facts that $\delta dR( \beta) = E^* dR(CS) - dR(Ch)$, $F_r$ is a homotopy from $F_0= E \circ i$ to $F_1 = id$, $CS$ is closed, and $K(r)$ is as above, so that 
\begin{eqnarray*}
\delta \Big(K(r)^*dR(CS)\Big) &=& \delta \Big(\big(K(r)^*dR(Ch)\big)\setminus[I]\Big)
=K(1)^* dR(Ch)-K(0)^* dR(Ch) \\
&=& (i \times i)^* \circ \boxplus^* (dR(Ch))-  \boxplus^*  \circ i^* (dR(Ch)).
\end{eqnarray*}
With this we obtain
\begin{eqnarray*}
\delta Y&=& (i \times i)^* \circ \boxplus^* (dR(Ch))- \boxplus^*  \circ i^* (dR(Ch)) -(i\circ \boxplus)^* (E^* dR(CS)-dR(Ch))\\
&& +(i\circ pr_1)^*(E^* dR(CS)-dR(Ch))+(i\circ pr_2)^*(E^* dR(CS)-dR(Ch)) \\
&& -(\boxplus-E\circ i\circ \boxplus)^*dR(CS)
+(pr_1-E\circ i\circ pr_1)^*dR(CS)\\
&&+(pr_2-E\circ i\circ pr_2)^*dR(CS)\\
&=&
(\boxplus\circ (i \times i)-i\circ pr_1-i\circ pr_2)^*dR(Ch)  -(\boxplus-pr_1-pr_2)^*dR(CS)\\
&=&
(i\times i)^*\circ (\boxplus- pr_1- pr_2)^*dR(Ch)  -(\boxplus-pr_1-pr_2)^*dR(CS)\\
&=&0,
\end{eqnarray*}
where in the last equality we used that, in general, $Ch(x\boxplus y)=Ch(x)+Ch(y)$ and $CS(x_t\boxplus y_t)=CS(x_t)+CS(y_t)$.

This completes the proof that $\II$ is a group homomorphism.
\end{proof}

\begin{lem}
The map $\II$ satisfies $\II \circ ( id \times r)^* = - \II$ where $r :S^1 \to S^1$ is given by $r(z) = \bar z = z^{-1}$. 
\end{lem}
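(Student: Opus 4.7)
The plan is to use that $\II$ is a group homomorphism (Lemma \ref{LEM:I-group-homo}) to reduce the identity $\II\circ(id\times r)^* = -\II$ to the single statement $\II(g_t\boxplus g_{-t})=0$ for every smooth $g_t\colon M\times S^1\to \U$. Setting $L:={}_*g_t$ and writing $\overline L$ for its pointwise loop-reversal, I first note that ${}_*g_{-t}=\overline L$: unwinding $\gamma_{g_0}(t)*(g_t\oplus g_0^{-1})*\overline{\gamma_{g_0}(t)}$ under $t\mapsto -t$ reverses only the middle factor, which is the same as reversing the entire based loop. Hence Lemma \ref{lem;basedgt} gives ${}_*(g_t\boxplus g_{-t})=L\boxplus\overline L$, and this is null-homotopic in $\Om U^M$ because loop reversal realizes the additive inverse in the H-space $[M,\Om U]=K^0(M)$, so $[L]+[\overline L]=0$.

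This null-homotopy lets me choose $h=I_0\colon M\to \B$, the constant map to $I_0$, as the smooth representative of $i\circ{}_*(g_t\boxplus g_{-t})$; then $h^*\beta=0$ and $E\circ h$ is the constant loop at $1\in\U$. For any smooth null-homotopy $k_r\colon M\times I\to \Om U$ of $L\boxplus \overline L$ one obtains $\II(g_t\boxplus g_{-t}) = a(\eta)$ with $\eta=\int_r k_r^*CS$. A direct computation shows $\eta$ is closed: by additivity of $CS$ under $\boxplus$ and the identity $CS(\overline L)=-CS(L)$ (immediate from \eqref{eq:evenCS}, since the integrand is linear in the loop-parameter derivative $\partial_s$ and the substitution $s\mapsto 1-s$ negates it after reversing limits of integration), we find
\[
d\eta = k_1^*CS - k_0^*CS = CS(L\boxplus \overline L)-0 = CS(L)+CS(\overline L)=0.
\]

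The main obstacle is to verify that $[\eta]\in H^{\odd}(M)$ lies in $Im([Ch])$, for then $a(\eta)=0$ by the exact sequence of Definition \ref{DEF:even-diff-K-theory-R-a-maps}. A priori, two choices of null-homotopy $k_r,k'_r$ give forms $\eta,\eta'$ whose difference is the transgression $\int_r(k_r*\overline{k'_r})^*CS$ along the loop $k_r*\overline{k'_r}$ in $\Om U^M$; such a loop is a map $M\to\Om(\Om U)\simeq\Om \B$ (via Bott), so by Theorem \ref{THM:3-statements}\eqref{statement-3} the difference $\eta-\eta'$ lies in $Im([Ch])+Im(d)$. Thus the class $[\eta]\in H^{\odd}(M)/Im(Ch)$ is independent of the chosen null-homotopy. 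It suffices to compute it for a convenient $k_r$: build it as the composite of (i) an explicit homotopy from $L\boxplus \overline L$ to the loop-concatenation $L*\overline L$ coming from the H-space coherence between $\boxplus$ and $*$ on $\Om U$ (which can be realized via transpositions as in Lemma \ref{lem:CSzero} applied loop-wise), followed by (ii) the standard ``roll-up'' contraction of $L*\overline L$, which has vanishing $CS$-form: the roll-up traces $L$ out and back, and the same reversal symmetry that gave $CS(\overline L)=-CS(L)$ kills the integrand pointwise in $r$. Stage (ii) therefore contributes zero, and stage (i), being a $CS_\Omega$-form for a map into $\Om U$, falls under Theorem \ref{THM:3-statements}\eqref{statement-3} and contributes an element of $Im([Ch])$. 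Combining these identifies $[\eta]\in Im([Ch])$, yielding $a(\eta)=0$ and finishing the proof.
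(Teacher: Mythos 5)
The opening reductions match the paper: you use that $\II$ is a homomorphism to reduce to $\II(g_t\boxplus \bar g_t)=0$, correctly identify ${}_*g_{-t}=\overline{{}_*g_t}$, observe that the loop $L\boxplus\overline L$ is null-homotopic so that $h$ may be taken constant, and prove $d\eta=0$. From there, however, the argument has several genuine gaps.

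\textbf{The main gap is the claim that stage (i) ``falls under Theorem~\ref{THM:3-statements}\eqref{statement-3}.''} The quantity $\int_r(\text{stage i})^*CS$ transgresses $CS\in\Om^{\even}(\Om U)$ along a \emph{path} in $(\Om U)^M$ (from $L\boxplus\overline L$ to $L*\overline L$), whereas $CS_\Omega$ in Theorem~\ref{THM:3-statements} is the transgression of $Ch\in\Om^{\even}(\B)$ along a \emph{loop}, i.e.\ a map $M\to\Om(\B)$. These are different forms: comparing a transgression of $CS$ on $\Om U$ with one of $Ch$ on $\B$ is precisely what requires the form $\beta$ of Lemma~\ref{lem;dbeta}, which you never invoke. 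Moreover, a path is not a loop, so even with the Bott identification the hypotheses of the theorem are not met. The same conflation undermines your preliminary ``independence of $k_r$'' step: the loop $k_r*\overline{k'_r}$ gives a map $M\to\Om(\Om U)$, but $\int_r(k_r*\overline{k'_r})^*CS$ is not $CS_\Omega$ of a map to $\Om\B$, so Theorem~\ref{THM:3-statements}\eqref{statement-3} does not apply without further mediation by $\beta$.

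\textbf{A secondary gap} is the assertion that the H-space coherence homotopy between $\boxplus$ and path-concatenation $*$ on $\Om U$ ``can be realized via transpositions as in Lemma~\ref{lem:CSzero} applied loop-wise.'' The Eckmann--Hilton interchange between two \emph{different} multiplications is not a permutation of coordinates; Lemma~\ref{lem:CSzero} only handles conjugation by rotation families exchanging blocks, which gives commutativity of a single $\oplus$-type sum, not a homotopy from the shuffle block sum to loop concatenation. So stage (i) is not produced by the cited machinery, and its CS-form is not controlled.

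The paper's own proof reaches a \emph{stronger} conclusion — that $\eta$ is \emph{exact}, not merely in $Im([Ch])$ modulo exact — and does so by a different device that sidesteps both issues: it replaces $\eta$ by a cohomologous cochain $({}_*g_t)^*\bigl(f_r^*dR(CS)\setminus[I]\bigr)$ via a relative smoothing homotopy, observes that $f_r^*dR(CS)\setminus[I]$ is a closed odd cochain on $\Om U$, and concludes exactness from $H^{\odd}(\Om U)=0$. No comparison of transgressions on $\Om U$ versus $\B$, and no explicit interchange homotopy, is needed. If you want to pursue your route, you would at minimum need to (a) insert the $\beta$-comparison from Lemma~\ref{lem;dbeta} to translate $\Om U$-transgressions into $\B$-transgressions, and (b) construct and control the CS-form of an explicit $\boxplus$-vs-$*$ coherence homotopy, neither of which is sketched.
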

\begin{proof}
Note that for a map $g_t : M \times S^1 \to U$ we have that
$ ( id \times r)^* g_t = \bar g_t$, where $\bar g_t$ is the reversed loop. 
The desired condition holds if and only if $\II(g_t \boxplus \bar g_t ) = 0$ since 
\[
\II(g_t) + \II(\bar g_t) = \II( g_t \boxplus \bar g_t ).
\]

Let $\phi: \Om U \to \Om U$ be given by $\phi(k_t) =  k_t \boxplus \bar k_t$ and let
$f_r: \Om U \times I \to \Om U$ be any (continuous) homotopy satisfying
\[
f_0 (k_t) =  1, \quad \text{ and } \quad \quad f_1(k_t)  = \phi(k_t).
\]
For example we may choose the composition of homotopies whose value on some $k_t$ is given by 
\[
k_t \boxplus \bar k_t = (k_t \boxplus 1) \cdot (1 \boxplus \bar k_t) \sim (k_t \boxplus 1) \cdot (\bar k_t \boxplus 1)  = (k_t \cdot \bar k_t \boxplus 1)  
\sim (k_t \circ \bar k_t \boxplus 1) \sim (1 \boxplus 1).
\]
For later use, we note that $\phi^*(CS)=0$, since for any plot $k_t:M\to \Om U$, we have
\begin{eqnarray*}
k_t^*(\phi^*(CS))
&=&
CS(\phi(k_t))
=CS(k_t\boxplus \bar{k_t})
\\
&=&
CS(k_t)+CS(\bar{k_t})
=\int_{S^1}Ch(k_t)+\int_{S^1}Ch(\bar{k_t})
=0.
\end{eqnarray*}

Since $\phi$ is homotopic via $f_r$ to the constant map, $i ({}_* g_t \boxplus \bar{ {}_*g_t} ) = i (\phi( {}_* g_t)) : M \to BU \times \Z$ is also homotopic to constant map. So, we may choose the map $h$ as in the definition of $\II$ to be the constant map, which is smooth. Let $k_r: M \times I \to \Om U$ be given by $k_r = f_r \circ ({}_* g_t\times id):M\times I\stackrel{{}_* g_t\times id}{\longrightarrow} \Om U\times I\stackrel{f_r}{\longrightarrow} \Om U$, so that $k_r$ is a homotopy from $k_0 = 1$ to $k_1=\phi( {}_* g_t)$.
This map may not be smooth, but we can choose relative homotopy to a map $k^s_r: M \times I \times I \to \Om U$
which extends the map $k^0_r = k_r$, fixing the smooth maps at the endpoints $k^0_0 = k^s_0 = 1$, and $k^0_1= k^s_1 =\phi({}_* g_t)$,  so that the map $k^1_r$ is smooth. 
\[
\begin{pspicture}(-1,0.3)(6,4.8)
%\psgrid(-1,0)(6,5)
\pspolygon(0,1)(5,1)(5,4)(0,4)
 \rput[r](-.1,1){$k_0^1=1$}
 \rput[r](-.1,2.5){$k_0^s=1$}
 \rput[r](-.2,4){$k_0^0=1$}
 \rput[l](5.1,1){$k_1^1=\phi({}_*g_t)$}
 \rput[l](5.1,4){$k_1^0=\phi({}_*g_t)$}
 \rput[l](5.1,2.5){$k_1^s=\phi({}_*g_t)$}
 \rput(2.5,.7){$k^1_r$ (smooth)}
 \rput(2.5,2.5){$k^s_r$}
 \rput(2.5,4.3){$k_r^0=k_r=f_r\circ({}_*g_t\times id)$}
  \end{pspicture} 
\]
Then we let
\begin{equation}\label{eta-in-reverse-proof}
\eta = \int_r (k^1_r)^* CS + h^* \beta =  \int_r (k^1_r)^* CS\quad\quad\in \Omega^{\textrm {odd}}(M)
\end{equation}
and by definition we have $\II(g_t \boxplus \bar g_t ) = [h] + a(\eta) = a(\eta)$, where the last equality follows since $h$ is constant. In order to show that $\II(g_t \boxplus \bar g_t ) =0$, it now suffices to show that $\eta$ is an exact form, since $a$ vanished on exact forms. (Note that $\eta$ is closed, since  $d \eta = (k^1_1)^* CS -  (k^1_0)^* CS = CS(\phi({}_* g_t)) - CS(const) = CS(\phi({}_* g_t))=0$.) We will show that $\eta$ is indeed exact by showing that the deRham form $\eta$, thought of as a cocycle via the deRham Theorem, is an exact cocycle.

As in the previous lemma, we choose a nice cochain model with a representative for integration along the interval $I$, so that for a cochain $c$ we can represent integration along the interval by the slant product $c \mapsto c \setminus [I]$, where $[I]$ is the fundamental chain of the interval. Recall that the deRham map $\omega \mapsto  dR(\omega) = \int \omega$ commutes with the slant product and integration along the interval $I$, \emph{i.e.} $dR( \int_I  \omega) = dR(\omega) \setminus [I]$.

Consider the cochain 
\[
X = \left( (k^s_r)^* dR(CS)  \right)  \setminus [I \times I]
\]
where $ \setminus [I \times I] =  \setminus [I]  \, ( \, \setminus [I] )$.
Then
\[
\delta X =  \left( (k^1_r)^* dR(CS)  \right)  \setminus [I] -  \left( (k^0_r)^* dR(CS)  \right)  \setminus [I] -  \left( (k^s_1)^* dR(CS)  \right)  \setminus [I] -  \left( (k^s_0)^* dR(CS)  \right)  \setminus [I]
\]
Since $k^s_0 = 1$, the last term vanishes. Also, the second to last term vanishes since $k^s_1 =\phi({}_* g_t)$ and $CS(\phi({}_* g_t)) =0$, as shown above.
This shows that
\begin{eqnarray*}
\delta X &=&  \left( (k^1_r)^* dR(CS)  \right)  \setminus [I] -  \left( (k^0_r)^* dR(CS)  \right)  \setminus [I]
\\
&=& dR \Big(\int_r (k^1_r)^* CS\Big) -   \left( (f_r (\circ {}_* g_t\times id))^* dR(CS)  \right)  \setminus [I],
\end{eqnarray*}
since $k^1_r$ is smooth, and for cochains in the image of the deRham map, the slant product equals the integration along $I$. By equation \eqref{eta-in-reverse-proof}, it thus suffices to show that the last term is exact. But
\begin{eqnarray*}
 \left( (f_r \circ ({}_* g_t\times id))^* dR(CS)  \right)  \setminus [I] &=&  \left( ({}_* g_t\times id)^* \circ f_r^*dR(CS)  \right)  \setminus [I]
 \\
& = & ({}_* g_t)^*  \left( f_r^*dR(CS)  \setminus [I]  \right) 
\end{eqnarray*}
and $ f_r^*dR(CS)  \setminus [I]$ is a closed odd degree cochain on $\Om U$, since
\[
\delta \left( f_r^*dR(CS)  \setminus [I]  \right)  = f_1^* dR(CS) - f_0^* dR(CS) = dR(\phi^* CS) - dR(const^* CS) = 0.
\]
Since $H^\textrm{odd}(\Om U) = 0$, the term $ f_r^*dR(CS)  \setminus [I]$ must also be exact, showing that $({}_* g_t)^*  \left( f_r^*dR(CS)  \setminus [I]  \right)$ is exact as well, and so we are done.
\end{proof}

\begin{lem}
The integration map $\II: \hat K^{-1}(M \times S^1) \to \hat K^0(M)$ satisfies $\II \circ p^* = 0$, where $p : M \times S^1 \to M$ is projection.
\end{lem}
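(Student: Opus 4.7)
The plan is to work with a representative $g: M \to U$ of a class in $\hat K^{-1}(M)$ and compute $\II(p^*g)$ directly, exploiting the collapse of the basing construction on loops that are constant in the $S^1$ direction. Setting $g_t := p^*g$, the equality $g_t = g_0 = g$ for all $t$ means that the middle factor $t \mapsto g_t \oplus g_0^{-1}$ appearing in Definition \ref{def;basedgt} is the constant loop at $g(x) \oplus g(x)^{-1}$. Consequently the based loop ${}_*(p^*g)(x) \in \Omega U$ is (up to a constant middle segment) the path $\gamma_{g(x)}$ traversed together with its reverse, and in particular is null-homotopic.

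I would then construct an explicit smooth null-homotopy $k_r: M \times I \to \Omega U$ from $k_0 = c_I$ (the constant loop at $I \in U$, which coincides with $E(I_0)$) to $k_1 = {}_*(p^*g)$ by ``unfolding'' $\gamma_{g(x)}$ linearly in $r$: concretely, $k_r(x)(s) = \gamma_{g(x)}(1 - r\phi(s))$ for a suitable smooth tent function $\phi: S^1 \to [0,1]$ with $\phi(0)=0$ and a plateau at value $1$, so that at $r=0$ one gets the constant loop and at $r=1$ one recovers ${}_*(p^*g)$ on the nose. I would choose $h: M \to BU \times \Z$ to be the constant map at $I_0$, which is smooth and continuously homotopic to $i \circ {}_*(p^*g)$ (combine the null-homotopy above with the homotopy $H_r$ from $i \circ E$ to the identity). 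Since $h$ is constant, $h^*\beta = 0$, reducing $\II(p^*g) = [h] + a(\eta)$ to showing $\eta = \int_r k_r^* CS$ is exact (or more weakly in $\mathrm{Im}(d) + \mathrm{Im}(Ch)$).

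The heart of the argument is the pointwise vanishing $k_r^*CS = 0$ on $M$ for every $r$. For each fixed $r$, the loop $k_r(x)$ has the form $\alpha_r(x) * (\text{constant}) * \overline{\alpha_r(x)}$ for some path $\alpha_r(x)$ in $U$. Using Remark \ref{rmk;CHCSUadd} (additivity of $CS$ under path composition), the vanishing of $CS$ on constant paths, and the identity $CS(\overline{\alpha}) = -CS(\alpha)$ (which follows from Definition \ref{EQ-Def:even-CS} by the orientation-reversing change of variables $t \mapsto 1-t$ in $\int_I ev_t^* Ch$), I obtain
\[
CS(k_r(x)) = CS(\alpha_r(x)) + 0 - CS(\alpha_r(x)) = 0.
\]
Hence $\eta = \int_0^1 k_r^*CS\, dr = 0$, so $a(\eta) = 0$, and $\II(p^*g) = [I_0] + a(0) = 0 \in \hat K^0(M)$. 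Since every class in $\hat K^{-1}(M)$ is represented by such a smooth $g$, this proves $\II \circ p^* = 0$.

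The main obstacle is a technical smoothness point: the formula defining ${}_*(p^*g)$ and my unfolding $k_r$ involve piecewise path composition, so both maps are \emph{a priori} only piecewise smooth at finitely many times. I would handle this by a standard smoothing near the concatenation points, invoking the independence of $\II$ from the particular choice of $k_r$ (Lemma \ref{LEM:odd-well-def}) to ensure the smoothed homotopy yields the same value. No changes to the folded-loop computation above are needed, since it relies only on the additivity and reversal properties of $CS$, which are insensitive to such reparametrizations.
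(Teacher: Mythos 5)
Your proposal is correct and follows essentially the same route as the paper's own proof: take $h$ to be the constant map so $h^*\beta = 0$, observe that $(g_t\oplus g_0^{-1})$ collapses to a constant path when $g_t = p^*g$, build the null-homotopy of ${}_*(p^*g)$ by sliding along $\gamma_g$, and conclude $\eta = 0$ from $CS(\alpha * \mathrm{const} * \overline{\alpha}) = 0$. The paper's contracting family $h_{t,s} = \gamma_g^s * \mathrm{const}_{\gamma_g(1-s)} * \overline{\gamma_g^s}$ with $\gamma_g^s(t) = \gamma_g(1-s(1-t))$ is your $k_r$ with a particular choice of the tent profile; otherwise the two arguments coincide.

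One small point of precision: what needs to vanish is $k_r^*CS$ as a form on $M\times I_r$, including the $dr$-components (these are exactly what survive $\int_r$), not merely the restriction of $k_r^*CS$ to each slice $M\times\{r\}$. Your phrasing (``pointwise vanishing \dots on $M$ for every $r$'') reads like the weaker slice statement, but the additivity/reversal argument you invoke does give the full vanishing because the decomposition $k_r = \alpha_r * \mathrm{const} * \overline{\alpha_r}$ is a decomposition of plots $M\times I_r\to PU$ that varies smoothly in $r$, so Remark~\ref{rmk;CHCSUadd} applies at the level of forms on $M\times I_r$. The paper makes the same implicit identification. Your care about smoothing the concatenation and appealing to Lemma~\ref{LEM:odd-well-def} is also consistent with, if slightly more explicit than, the paper's treatment.
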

\begin{proof}
Let $g: M \to U$ be a representative for $[g] \in \hat K^{-1}(M)$. Then $g_t : M\times S^1 \to U$ 
defined by $g_t = p^*g$ satisfies $g_t = g$ for all $t \in S^1$. Recall from page \pageref{path:gamma_g}, that $\gamma_g:M\times I\to U$ is a path with $\gamma_g(0)=g\oplus g^{-1}$ and $\gamma_g(1)=id$. It follows that the induced based map ${}_* g_t : M \to \Om U$ 
defined by 
\[
{}_*g_t = \gamma_g (t) * (g_t \oplus g_0^{-1}) * \overline{\gamma_g (t) }
\] 
is the path from $id$ to   $g \oplus g^{-1}$ and back to $id$ (\emph{cf.} Lemma \ref{lem;basedgt}).
We must show $\II(g_t) =0$.

Denote by $\gamma_g^s(t)=\gamma_g(1-s(1-t))$ the path from $\gamma_g(1-s)$ to $\gamma_g(1)=id$, and consider the homotopy $h_{t,s} : M \times I \to \Om U $ which contracts ${}_* g_t$ to the identity, which is defined by concatenating the following paths,
\[
h_{t,s} =  \gamma_g^s * const_{\gamma_g(1-s)} * \overline{\gamma_g^s}, \quad \text{for }s\in I.
\]
Then $h_{t,1} =\gamma_g* (g\oplus g^{-1}) *\overline{\gamma_g}=  {}_* g_t$, and $h_{t,0}  : M \to  \Om U$ is the constant map to the identity in $U$.

Since ${}_* g_t$ is homtopic to a constant, we can choose (in the definition of $\II$) $h: M \to BU \times \Z$ to be the constant map.
Then $E \circ h$ is also constant and we can let $k_s = h_{t,s}$ be the chosen homotopy from $k_1 = {}_* g_t$ to $k_0 = E \circ h$, the constant map.
Then
\begin{eqnarray*}
\eta &=& \int_s  (k_s)^* CS + h^* \beta = \int_s  (h_{t,s})^* CS \\ &=& CS(\gamma_g^s ) +CS(const)+ CS(\overline{\gamma_g^s} ) = CS(\gamma_g^s )- CS(\gamma_g^s )=0.
\end{eqnarray*}
So that
\[
\II (g_t) = [h] + a(\eta) = 0
\]
\end{proof}

\begin{lem} \label{lem;Chcommutes}
The map $\II$ makes the following diagram commute
\[
\xymatrix{
\hat K^{-1}(M \times S^1) \ar[r]^{Ch} \ar[d]^{\II} &  \Om^{\textrm{odd}}(M \times S^1) \ar[d]^{\int_{S^1}} \\
 \hat K^0 (M)  \ar[r]^-{Ch} &    \Om^{\textrm{even}}(M)
}
\]
\end{lem}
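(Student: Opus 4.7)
The plan is to unwind the definition of $\II(g_t) = [h] + a(\eta)$ and show directly that applying the Chern form yields $CS({}_*g_t)$, which in turn equals $\int_{S^1} Ch(g_t)$. First, using the axiom $R \circ a = d$ (part of Definition \ref{defn:diffext}) together with the fact that $R = Ch$ on our model (Definition \ref{DEF:even-diff-K-theory-R-a-maps}), I would compute
\[
Ch(\II(g_t)) = Ch(h) + Ch(a(\eta)) = Ch(h) + d\eta.
\]
The entire content of the lemma is thus to verify that $Ch(h) + d\eta = \int_{S^1} Ch(g_t)$.

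Next, I would compute $d\eta$ term by term, where $\eta = \int_r k_r^* CS + h^* \beta$. For the first summand, recall that $CS$, restricted to the based loop space $\Om U$, is \emph{closed} (this is the observation following \eqref{EQ:even-CS-Stokes}). Since the $k_r$ land in $\Om U$, fiber integration Stokes' formula collapses to boundary terms,
\[
d\int_r k_r^*CS = k_1^*CS - k_0^*CS = CS({}_*g_t) - (E \circ h)^* CS,
\]
using $k_0 = E \circ h$ and $k_1 = {}_*g_t$. For the second summand, Lemma \ref{lem;dbeta} gives
\[
d(h^*\beta) = h^*(E^* CS - Ch) = (E \circ h)^* CS - Ch(h).
\]
Adding these, the two $(E \circ h)^* CS$ contributions cancel exactly (which is precisely why $\beta$ was constructed), leaving
\[
Ch(h) + d\eta = CS({}_*g_t).
\]

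Finally, I would identify $CS({}_*g_t)$ with $\int_{S^1} Ch(g_t)$. By Lemma \ref{lem;basedgt} we have $CS({}_*g_t) = CS(g_t)$, and by the definition of $CS$ via fiber integration along the loop parameter (Definition \ref{defn;CS} together with the loop interpretation in Definition \ref{def;basedgt}), $CS(g_t) = \int_{t \in S^1} g_t^* Ch = \int_{S^1} Ch(g_t)$. This completes the identification and proves the diagram commutes.

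I don't expect any serious obstacle: all the geometric content is already encoded in the defining property of $\beta$ (Lemma \ref{lem;dbeta}), the closedness of $CS$ on $\Om U$, and the CS-invariance of the free-to-based loop conversion (Lemma \ref{lem;basedgt}). These were precisely the ingredients assembled to force the diagram to commute on the nose, so the proof reduces to a short three-line calculation once the definitions are unwound.
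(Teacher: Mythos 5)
Your proof is correct and follows essentially the same route as the paper's: unwind $\II(g_t) = [h] + a(\eta)$, use $Ch \circ a = d$, compute $d\eta$ using Stokes (with closedness of $CS$ on $\Om U$) and the defining relation $d\beta = E^*CS - Ch$ from Lemma~\ref{lem;dbeta}, cancel the $(E\circ h)^* CS$ terms, and finish with Lemma~\ref{lem;basedgt} to convert $CS({}_*g_t)$ to $CS(g_t) = \int_{S^1} Ch(g_t)$. The paper condenses the last two steps into a single chain of equalities, but the logic and the ingredients are identical.
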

\begin{proof}
This follows by a direct computation. As in the definition of $\II$, choose $h$ homotopic to $i \circ {}_*g_t$ and choose 
 a homotopy $k_r$  such that $k_0 = E \circ h$ and $k_1=  {}_*  g_t$. Define $\eta  = \int_r k_r^* CS + h^* \beta$,  then by Lemma \ref{EQU:dbeta} we have
\begin{eqnarray*}
d \eta &=& k_1^* CS  - k_0^* CS + h^* (E^* CS - Ch) \\ 
&=&  {}_*  g_t^* CS - (E \circ h)^* CS  + h^* (E^* CS - Ch) =  g_t^* CS  - h^*  Ch 
 \end{eqnarray*}
so that
\begin{eqnarray*}
Ch ( \II( g_t) ) 
 &=& Ch  ([h]) + Ch( a( \eta)) = Ch  ([h]) +   d \eta \\ 
 &=&  Ch  ([h]) +  g_t^* CS  - h^*  Ch 
 = g_t^* CS = \int_t g_t^* Ch.
 \end{eqnarray*}
\end{proof}

\begin{lem}
The following diagram commutes.
\[
\xymatrix{
\hat K^{-1}(M \times S^1) \ar[r]^{I} \ar[d]^{\II}  & K^{-1}(M \times S^1) \ar[d]^{\int_{S^1}} \\
 \hat K^{0}(M ) \ar[r]^{I} & K^{0}(M ) 
}
\]
where the right vertical map is the $S^1$-integration map in $K$-theory.
\end{lem}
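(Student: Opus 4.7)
The plan is to reduce the commutativity to a tautological identification: $I\circ\II$ discards the differential data and produces exactly the map $h$ (up to homotopy), and the $K$-theoretic $S^1$-integration is constructed precisely so as to recover this same homotopy class $[i \circ {}_*g_t]$. In particular, the bulk of the work already happened in Definition \ref{defn;S1int}, where the representative $h$ was chosen to be a smooth approximation of $i \circ {}_*g_t$.

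First I would unwind the top-left composition. Given a representative $g_t : M \times S^1 \to U$ of $[g_t] \in \hat K^{-1}(M \times S^1)$, we have $\II(g_t) = [h] + a(\eta) \in \hat K^0(M)$. Since $\operatorname{Im}(a) = \operatorname{Ker}(I)$, as established in Definition \ref{DEF:even-diff-K-theory-R-a-maps}, the forgetful map kills $a(\eta)$, giving $I(\II(g_t)) = I([h]) \in K^0(M) = [M, BU\times\Z]$. By construction, $h$ was chosen to be a smooth map continuously homotopic to $i \circ {}_*g_t$, so $I([h])$ equals the homotopy class $[i \circ {}_*g_t]$.

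Next I would analyze the bottom-right composition and verify that $\int_{S^1}([g_t]) = [i \circ {}_*g_t] \in K^0(M)$. Recall from Definition \ref{def;S1intKtheory} that $\int_{S^1}$ is built from the splitting $K^{-1}(M\times S^1) \cong \operatorname{Im}(p^*) \oplus \operatorname{Ker}(j^*)$, followed by $(q^*)^{-1}$ and the suspension isomorphism. Under the projection to $\operatorname{Ker}(j^*)$, the class $[g_t]$ is sent to $[g_t] - [g_0] = [g_t \boxplus g_0^{-1}]$, which restricts to the nullhomotopic identity at the basepoint $\star \in S^1$. The construction of the based loop map $g_t \mapsto {}_*g_t$ in Definition \ref{def;basedgt} implements this nullhomotopy explicitly by conjugating with the path $\gamma_{g_0}(t)$ from $g_0 \boxplus g_0^{-1}$ to $id$, producing a based loop ${}_*g_t : M \to \Om U$ whose homotopy class as a free loop agrees with that of $g_t \boxplus g_0^{-1}$. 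Since ${}_*g_t$ is based, it corresponds via the loop-suspension adjunction to a class in $\tilde K^{-1}(\Sigma M_+)$, which under the Bott periodicity isomorphism induced by $E$ is identified with $[i \circ {}_*g_t] \in K^0(M)$. Combining these identifications yields $\int_{S^1} I([g_t]) = [i \circ {}_*g_t] = I(\II(g_t))$, proving the lemma.

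The main (and essentially only) technical point is step two: one must check that the $K$-theoretic $S^1$-integration really does factor through the free-to-based loop map of Definition \ref{def;basedgt} followed by $i$. This is a purely homotopy-theoretic verification that does not involve differential forms or Chern--Simons considerations, and amounts to tracking the homotopy equivalence $E : BU\times \Z \to \Om U$ through the splitting in Definition \ref{def;S1intKtheory}. The key input is that $\gamma_{g_0}(t)$ is a continuous path in $U$ from $g_0 \boxplus g_0^{-1}$ to the identity, so conjugation by $\gamma_{g_0}$ does not change the free-homotopy class, ensuring that $[{}_*g_t] = [g_t \boxplus g_0^{-1}]$ in $K^{-1}(M\times S^1)$.
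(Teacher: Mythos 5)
Your proposal is correct and takes essentially the same approach as the paper: both reduce, via $\operatorname{Im}(a)=\operatorname{Ker}(I)$, to showing $I(i({}_*g_t)) = \int_{S^1} I(g_t)$ in $K$-theory, and then trace that class through the splitting $K^{-1}(M\times S^1)\cong\operatorname{Im}(p^*)\oplus\operatorname{Ker}(j^*)$, the identification $\operatorname{Ker}(j^*)\cong\tilde K^{-1}(\Sigma M_+)$, and the Bott periodicity suspension isomorphism. The only cosmetic difference is that the paper uses the multiplicative transport-to-identity splitting $g_t\mapsto g_0^{-1}g_t$ of $LU\cong U\times\Om U$ (observing it is homotopic to ${}_*g_t$) and assembles an explicit commuting diagram, while you track the $K$-class $[g_t]-[g_0]=[g_t\boxplus g_0^{-1}]$ directly and note that conjugation by $\gamma_{g_0}$ preserves the free homotopy class.
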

\begin{proof} As in the definition of $\II$, choose $h$ homotopic to $i \circ {}_*g_t$ and choose 
 a homotopy $k_r$  such that $k_0 = E \circ h$ and $k_1=  {}_*  g_t$. Define $\eta  = \int_r k_r^* CS + h^* \beta$.
Since $\II(g_t) = [h] + a(\eta)$, where $h$ is homotopic to $i \circ {}_*g_t$ and $Im(a) = Ker(I)$, it suffices to show for any representative $g_t$ that
\[
I ( i( {}_*g_t)  )= \int_{S^1} I ( g_t  ).
\]
Consider the following diagram, where the bottom row defines the $S^1$-integration in $K$-theory as defined in Definition \ref{def;S1intKtheory}.
\[
\xymatrix{
 [M, LU] \ar[r]^-{\pi} \ar[d]^{=} & [M,\Om U] \ar[rr]^{i_*} \ar[d]_{=} \ar[dr] & & [M, BU \times \Z] \ar[d]^{=} \\
K^{-1}(M \times S^1) \ar[r]^-{pr} & Ker(j^*) \ar[r]^-{(q^*)^{-1}} & \tilde K^{-1}(\Sigma M_+) =K^{-1}(M\wedge S^1) \ar[r]^-{\sigma^{-1}} & K^0(M)
}
\]
The map $\pi$ is defined as follows.
The adjoint of a map $M \times S^1\to U$ is a map $M \to LU$. Using the transport to the identity map, $g_t \mapsto g_0^{-1}g_t$, we have
$LU \cong U \times \Om U $, and we let $\pi$ denote projection onto the $\Om U$ factor. Note that since
$g_0^{-1}g_t$ is homotopic to ${}_*g_t$, we have that the induced map on homotopy classes
$\pi: [M, LU]  \to [M, \Om U]$ is the same as the map $g_t \mapsto {}_*g_t$. This shows that on homotopy classes we have
\[
(i_* \circ \pi) (g_t) =  i ( {}_* g_t ),
\]
so the top row of the diagram represents $I ( i ( {}_* g_t ))$.

The left square and middle triangle commute. So, it suffices to show the rightmost quadrilateral commutes. But $i_*$ is an isomorphism with inverse induced by the Bott periodicity map $E: BU \times \Z \to \Om U$, which induces the suspension isomorphism $\sigma$.
\end{proof}

\begin{lem}\label{LEM:odd-I-commute-with-a}
The following diagram commutes for all manifolds $M$.
\[
\xymatrix{
\Om^{\even}(M \times S^1)/Im(d) \ar[r]^-{a} \ar[d]^{\int_{S^1}} & \hat K^{-1}(M \times S^1)\ar[d]^{\II}\\
\Om^{\odd}(M) \ar[r]^-{a} /Im(d)& \hat K^0(M ) 
}
\]
\end{lem}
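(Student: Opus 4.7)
The plan is to exploit that $a(\alpha) \in Ker(I)$, allowing a particularly simple choice in the definition of $\II([g_t])$. Both $\II(a(\alpha))$ and $a(\int_{S^1}\alpha)$ lie in $Ker(I) \subset \hat K^0(M)$---the former by the previous lemma together with $I \circ a = 0$, the latter directly from $I \circ a = 0$---so by the isomorphism $\widehat{CS}: Ker(I) \xrightarrow{\sim} \Om^{\odd}(M)/(Im(d) + Im([Ch]))$ of Definition \ref{DEF:even-diff-K-theory-R-a-maps}, it will suffice to show
\[
\widehat{CS}(\II(a(\alpha))) \equiv \int_{S^1}\alpha \pmod{Im(d) + Im([Ch])}.
\]

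Given $\alpha$, I will choose a smooth $g_{t,u}:M \times S^1 \times I \to U$ with $g_{t,0}=1$, $g_{t,1}=g_t$, and $CS(g_{t,u}) \equiv \alpha$ mod $Im(d) + Im([Ch])$, so that $a(\alpha) = [g_t]$. The based family $\mu_u := {}_*g_{\cdot,u}: M \to \Om U$ is then a smooth path from the constant loop at $u=0$ (since $g_{t,0}=1$) to ${}_*g_t$ at $u=1$. Applying $i$ produces a continuous homotopy from $i(\text{const})$ to $i \circ {}_*g_t$; since the constant loop lies in the index-$0$ component of $\Om U$ (which maps to $BU\times\{0\}$, the path-connected component containing $I_0$), the map $i\circ {}_*g_t$ is continuously homotopic to the smooth constant map at $I_0$. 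Hence we may take $h = I_0$ in the definition of $\II([g_t])$, which gives $[h]=0$ and $h^*\beta = 0$, so $\II([g_t]) = a(\eta)$ with $\eta = \int_r k_r^* CS$.

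Next I will build the smooth homotopy $k_r: M \to \Om U$ from $E(I_0)$ to ${}_*g_t$ as the concatenation (smoothed at the joint) of $\sigma_r(t) = \exp(2\pi i (1-r)tI_0)$---independent of $M$ and contracting $E(I_0)$ to the constant loop---followed by $\mu_u$. The $\sigma$-piece contributes $0$ to $\int_r k_r^*CS$ as it is $M$-constant. For the $\mu$-piece, Lemma \ref{lem;basedgt} applied with source $M\times I$ (its statement holds for any plot source) gives $\mu^*CS_{\Om U} = g^*CS_{LU}$ on $M\times I$, and Fubini then yields
\[
\int_I \mu^*CS_{\Om U} = \int_{S^1} CS(g_{t,u}) \equiv \int_{S^1}\alpha,
\]
modulo $Im(d)$ and modulo $\int_{S^1}$ of even Chern forms on $M\times S^1$. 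The latter is an odd Chern form on $M$ modulo exact, because $\int_{S^1}\circ [Ch] = [Ch]\circ \int_{S^1}$ in $K$-theory (Definition \ref{def;S1intKtheory}) and any two closed forms representing the same cohomology class differ by an exact form. Therefore $\eta \equiv \int_{S^1}\alpha$ modulo $Im(d)+Im([Ch])$, which gives $\II(a(\alpha)) = [I_0] + a(\eta) = a(\int_{S^1}\alpha)$.

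The main subtlety in this plan is the last step: that $\int_{S^1}$ takes even Chern forms on $M\times S^1$ to odd Chern forms on $M$ modulo exact. This relies on the $K$-theoretic compatibility of $\int_{S^1}$ with $[Ch]$, combined with the standard observation that two closed representatives of the same cohomology class differ by an exact form. A secondary technical point is the smoothing of $k_r$ at the concatenation $r = 1/2$, which is handled by the usual reparameterization argument and does not affect the CS values.
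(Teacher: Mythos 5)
Your overall strategy matches the paper's: reduce to showing the two sides agree in $Ker(I)$ via the isomorphism $\widehat{CS}$, choose $h$ to be the constant map so that $[h]=0$ and $h^*\beta=0$, take $k_r$ to be the based family $\mu_u={}_*g_{\cdot,u}$, and use Lemma~\ref{lem;basedgt} plus Fubini to evaluate $\eta=\int_u CS({}_*g_{\cdot,u})=\int_{S^1}CS(g_{t,u})$. Two small remarks.

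First, the $\sigma_r$ piece you prepend to $\mu_u$ is unnecessary and based on a miscomputation of $E(I_0)$. Since $E(P)(t)=e^{2\pi i(tP+(1-t)I_0)}$ and $I_0$ is a projection, one has $E(I_0)(t)=e^{2\pi i I_0}=Id$ for all $t$; that is, $E(I_0)$ is already the constant loop at the identity, which coincides with $\mu_0={}_*g_{\cdot,0}$. Your $\sigma_0(t)=e^{2\pi itI_0}$ is a nonconstant loop (winding number one on the range of $I_0$), so the concatenated $k_r$ as written does not actually start at $E\circ h$. Dropping the $\sigma$ piece and taking $k_r=\mu_r$ fixes this, and since $\sigma_r$ is $M$-constant its contribution to $\int_r k_r^*CS$ vanishes anyway, so the end result is unaffected.

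Second, your justification that $\int_{S^1}$ of an even Chern form on $M\times S^1$ is an odd Chern form on $M$ modulo exact (needed to absorb the $Im(Ch)$ ambiguity in the choice of $g_{t,u}$) invokes the abstract compatibility $\int_{S^1}\circ[Ch]=[Ch]\circ\int_{S^1}$. That statement is true but does not follow from Definition~\ref{def;S1intKtheory} alone, which only sets up the $K$-theoretic $\int_{S^1}$; it requires the standard fact that the Chern character commutes with the suspension isomorphism. The paper instead invokes its Lemma~\ref{lem;deta2}, which produces an explicit form $\eta_{P_t}$ with $d\eta_{P_t}=\int_{S^1}Ch(P_t)-Ch(h_*(P_t))$, giving a self-contained, form-level proof of the same containment. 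Either route is valid, but you should either cite the classical compatibility of $Ch$ with suspension properly or follow the paper and point to Lemma~\ref{lem;deta2}, noting as the paper does that it is logically independent of the present lemma.
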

\begin{proof}
By definition of the maps $a:\Om^{*+1}(M)/Im(d) \to \hat K^*(M)$ we can write the diagram of interest
as 
\[
\xymatrix{
\Om^{\even}(M \times S^1)/Im(d) \ar[r]^-{pr} \ar[d]^{\int_{S^1}} &  \Om^{\even}(M \times S^1)/Im(Ch) 
\ar[d]^{\int_{S^1}} \ar[r]^-{CS^{-1}} \ar[d]^{\int_{S^1}}   &  Ker(I) \subset \hat K^{-1}(M \times S^1)\ar[d]^{\II}\\
\Om^{\odd}(M) \ar[r]^-{pr} /Im(d) & \Om^{\odd}(M) \ar[r]^-{CS^{-1}} /Im(Ch)  &  Ker(I) \subset  \hat K^0(M ) 
}
\]
where $pr$ is the projection, which is well defined since $Im(d) \subset Im(Ch)$ by Theorem \ref{THM:3-statements}\eqref{statement-1}, and the middle vertical map is well defined since $Im(\int_{S^1} \circ Ch) \subset Im(Ch)$, by Lemma \ref{lem;deta2} (which will be proven independently from this lemma) and Theorem \ref{THM:3-statements}\eqref{statement-1}. The left square clearly commutes, so it suffices to show that the right square commutes. Note, that the maps $CS^{-1}$ are isomorphisms onto $Ker(I)$.

For given $g_t \in Ker(I)\subset\hat K^{-1}(M\times S^1)$ there is a path $g_{t,s}: (M \times S^1) \times I \to U$ such that $g_{t,0}= id$ is constant for all $t\in S^1$, and $g_{t,1} = g_t: M \times S^1 \to U$. Note that the parameter for this path $g_{t,s}$ is $s\in I$, so that $CS(g_{t,s})=\int_{s\in I}Ch(g_{t,s})$. It  suffices to show that
\[
CS( \II(g_t) )= \int_{t\in S^1} CS(g_{t,s}) \quad \quad \textrm{mod exact,}
\]
which is the same as showing 
\[
CS([h]) + \eta = \int_{t\in S^1} \int_{s\in I}Ch(g_{t,s}) \quad \quad \textrm{mod exact.}
\]
for some choices (as in the definition of $\II$) of a smooth map $h: M \to BU \times \Z$ that is homotopic to $i \circ {}_* g_t$, and
a smooth map $k_s: M \times I \to \Om U$ such that $k_0 = E \circ h$ and $k_1 = {}_*g_t$, where, as always
\[
\eta = \int_s k_s^* CS + h^* \beta
\]

Since the map $g_t$ is nullhomotopic via $g_{t,s}$ from above, the map $i \circ {}_* g_t$ is also nullhomotopic via $i \circ {}_* g_{t,s}$ , so we may as well choose $h$ to be the constant map, which is smooth. Then $E \circ h$ is also the constant map (without loss of generality, to the identity). So, we may choose the homotopy $k_s = {}_* g_{t,s}$ and then  $k_0 = E \circ h$ is constant, and $k_1 = {}_* g_t$. Then $CS([h]) = 0$ and using Lemma \ref{lem;basedgt}
\begin{multline*}
CS([h])+\eta =
\eta = \int_s k_s^* CS = \int_s ({}_* g_{t,s})^* CS
\\
 = \int_s CS({}_* g_{t,s})  = \int_s CS(g_{t,s}) = \int_{t\in S^1} \int_{s\in I}Ch(g_{t,s}),
\end{multline*}
so we are done.
\end{proof}

\subsection{Even to Odd}\label{SEC:even-to-odd}
We now define an integration map $\II:\hat K^0(M\times S^1)\to \hat K^{-1}(M)$ and check the axioms for it. 

In the spirit of the last subsection, there is at least morally a ``wrong way'' map 
\[
H:Map(M, U) \to Map(M \times S^1,BU \times \Z)
\]
such that the following diagram 
\[
\xymatrix{
 Map(M \times S^1,BU \times \Z) \ar[r]^-{Ch}  &  \Om^{\textrm{even}}(M \times S^1) \ar[d]^{\int_{S^1}} \\
 Map(M, U)  \ar[u]^{H}  \ar[r]^-{Ch} &    \Om^{\textrm{odd}}(M)
}
\]
commutes for all compact manifolds with corners.  Geometrically, for $g: M \to U(n)=U(\C^n)$, consider the $\C^n$-bundle with connection $\nabla$ over $M \times S^1$ constructed by gluing the $\C^n$-bundle over $M \times I$ with connection $t g^{-1}dg$ for $t \in [0,1]$ along the endpoints $0$ and $1 \in [0,1]$. Then the diagram commutes in the sense that (\emph{c.f.} \cite[Equation (2.4)]{TWZ3})
\begin{equation}\label{EQU:int-Ch-nabla=Ch-g}
\int_{S^1} Ch(\nabla) = CS(t g^{-1}dg) = Ch(g)
\end{equation}
We use this idea as a guide in constructing an integration map in the correct direction.

Recall from Definition \ref{DEF:P-assoc-bundle} that a map $P_t: M \times S^1 \to \B$ induces a projection operator $P_t$ on a trivial $\C_p^q$ bundle over $M \times S^1$ (for some integers $p,q$), and an induced sub-bundle $E_{P_t}  \subset (M\times S^1)\times \C_p^q$, with connection $\nabla_{P_t}= P_t dP_t$, where $E_{P_t}$ is well defined up to adding a trivial bundle with trivial connection. To simplify the notation in this subsection, we will use the notation of denoting the Chern character $Ch(P_t)$ by
\[
Ch(\nabla_{P_t})=Ch(P_t).
\]
There is a map 
 \[
h_* : Map(M \times S^1, \B) \to Map(M, U)
\] 
given by
\[
h_*(P_t) = hol_{S^1}(\nabla_{P_t}) \oplus id
\]
where $hol_{S^1}(\nabla_{P_t}) \in End \left(E_{P_t} \big|_{M \times \{0\} }\right)$ is the holonomy of  $\nabla_{P_t}$ along the 
$S^1$ factor of $M \times S^1$, regarded as an endomorphism of the fiber of  $E_{P_t}|_{M\times \{0\}} \subset (M\times \{0\}) \times \C_p^q$ over $M \times \{0\} \subset M \times S^1$, and $id$ is the identity endomorphism on the orthogonal complement of $E_{P_t} \big|_{M \times \{0\} }$ in $(M\times \{0\}) \times \C_p^q$. Note that $h_*(P_t) : M \to  U$ is unchanged under adding a trivial bundle with trivial connection, since 
the holonomy of the trivial connection is the identity.

 The following diagram commutes in degree one, but does \emph{not} commute in general for higher degrees.
  \begin{equation} \label{eq;h_*diag}
\xymatrix{
 Map(M \times S^1, \B) \ar[d]^{h_*} \ar[r]^-{Ch}  &  \Om^{\textrm{even}}(M \times S^1) \ar[d]^{\int_{S^1}} \\
 Map(M, U)    \ar[r]^-{Ch} &    \Om^{\textrm{odd}}(M)
}
\end{equation}
In fact, here is an explicit example showing  the diagram does not commute in higher degrees, compare \cite{FL}.
 We show below (Lemma \ref{lem;deta2}) that the error in the diagram is always given by an exact form on $M$.

\begin{ex} By the Narasimhan Ramanan Theorem,  it suffices to produce an example of a bundle with connection for which the diagram does not commute.
Let $M = S^1 \times S^1 \times S^1$ and consider the trivial line bundle over $M \times S^1$ with connection $\nabla = i( A + B dt)$, using coordinates $(p,q,s,t) \in M \times S^1$. We verify that the diagram does not commute in degree three if $A = f(p) dq$ and $B = g(s)$ where $f$ and $g$ are such that $\frac{ \partial f}{\partial p} \frac{ \partial g}{\partial s}$ is not identically zero. For example, we may let $f(p) = \cos(p)$ and $g(s) = \sin(s)$.

First,  the degree three component of $Ch(hol_{S^1} (\nabla) )$ is zero for line bundles since if $g = hol_{S^1} (\nabla)= \exp( 2 \pi i \int B dt)$, then $(g^{-1}dg)^3 = 0$. On the other hand,  $-R = -\nabla^2 = d_M A + d_M B dt$, so that 
$R^2 = 2 d_M A \wedge d_M B dt$, and in degree three of $\int_{S^1} Ch( \nabla)$ we have
 \[
 \int_{S^1} R^2 = \int_{S^1} 2 d_M A \wedge d_M B dt = 2 \frac{ \partial f}{\partial p} \frac{ \partial g}{\partial s} dp dq ds.
 \]
 For example, $f(p) = \cos(p)$ and $g(s) = \sin(s)$, then in degree three we have $\int_{S^1} Ch( \nabla )  = -2 \sin(p) \cos(s) dp dq ds$.
 Note this is exact on $M$, which is the case in general, by Lemma \ref{lem;deta2} below. 
\end{ex}

Now, let $(E,\nabla)$ be the bundle with connection induced by $P_t \in Map(M\times S^1, \B)$. Let $\nabla_t$ be the connection on the slice of $E_t$ over $M \times \{t\}$ and let $g_t \in U(\C_{p}^q)$ be the parallel transport from $E_0$ to $E_t$. Then $g_0 = id$ and $g_1 = hol_{S^1}(\nabla) $. Note that $\nabla_0 = \nabla_1$. We can pullback the bundle $E \to M \times S^1$ along the projection $M \times I \to M \times S^1$ via the map $I\mapsto S^1$ which identifies the endpoints of $I$. By abuse of notation, we also denote by $(E,\nabla)$ the pullback bundle $E\to M\times I$ from $E\to M\times S^1$. Then we have,
\[
 \int_{S^1} Ch( \nabla)  =   \int_{I} Ch( \nabla).
 \] 
The latter term can be expressed equivalently using the following gauge transformation induced by parallel transport. There is a bundle isomorphism $g_t: E_{0} \times I \to E$ given by $(v,t) \mapsto g_t(v)$, so the bundle $E_{0} \times I$ over $M \times I$, with connection $g_t^* \nabla$, is isomorphic to the bundle $E \to M \times I$ with connection $\nabla$. Therefore
\[
 \int_{S^1} Ch( \nabla)  =   \int_{I} Ch( \nabla) = \int_I Ch(g_t^* \nabla).
 \] 
Finally, since the connection $g_t^* \nabla$ vanishes on $\partial / \partial t$, we can regard this as a path of connections on the 
bundle $E_{0} \to M$, and so 
\begin{equation}\label{EQU:int-ch-nabla=CS-g-nabla}
 \int_{S^1} Ch( \nabla)  =   \int_{I} Ch( \nabla) = \int_I Ch(g_t^* \nabla) = CS(g_t^* \nabla).
 \end{equation}
To see that diagram \eqref{eq;h_*diag} commutes up to an exact form, it now suffices to show that $CS(g_t^* \nabla_t)$ and $Ch( g_1)$ differ by an exact form that depends naturally on the map $P_t: M\times S^1 \to \B$ that determines $\nabla$. This is the form $\eta$ we now define.
\begin{defn} \label{defn;eta2} For a map $P_t: M \times S^1 \to \B$, let $\nabla$ be the induced connection on the sub-bundle $E \subset (M\times S^1)\times \C_p^q$, and let $\nabla^\perp$ be the induced connection on the  complementary bundle $E^\perp$. Define $\eta \in \Om^{\textrm{even}}(M)$ by
\begin{multline*}
\eta=\eta_{P_t}=  \int\int_{(r,t) \in I\times I}  Ch\Big( (1-r) \nabla_0 + r g_t^* \nabla_t \Big)
\\
+ \int\int_{(r,t) \in I\times I}  Ch \Big( (1-r) \left( t\nabla_0 \oplus t\nabla_0^\perp  \right) +  r (g_1 \oplus id)^*  \left( t\nabla_0 \oplus t\nabla_0^\perp  \right)\Big).
\end{multline*}
Note, that $\eta$ is given by integrating the Chern forms of two two-parameter families of connections over a square; see Figure \ref{rectofConns}. Note further, that since $\eta_{P_t}$ is determined by the connection $\nabla = P_tdP_t$ defined by the projection operator $P_t$, it follows that $\eta_{P_t}$ is natural in the sense that for a map $f:N \to M$, we have that $f^*(\eta_{P_t}) = \eta_{P_t\circ (f\times id)}\in \Om^{\even}(N)$.
\end{defn}

The definition of the $\eta$ form and some of its important properties are illuminated by the following lemma and its proof.
\begin{lem} \label{lem;deta2}
For a map $P_t: M \times S^1 \to \B$ we have
\[
d ( \eta_{P_t} ) =  \int_{S^1} Ch(P_t) - Ch(h_*(P_t)) 
\]
This shows that $h_*$ makes the diagram in \eqref{eq;h_*diag} commute modulo exact forms.
\end{lem}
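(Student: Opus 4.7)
The plan is to apply Stokes' theorem for fiber integration over the 2-cube $I\times I$ to each of the two double integrals $\eta_1$ and $\eta_2$ whose sum is $\eta_{P_t}$. Because the Chern character is closed on the universal bundle, for any two-parameter family of connections $A(r,t)$ one has that $d\iint_{I^2} Ch(A(r,t))$ reduces, by Stokes, to a signed sum of four CS-forms, one along each edge of $\partial(I\times I)$.

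First I would treat $\eta_1$, whose family $A(r,t) = (1-r)\nabla_0 + r\, g_t^*\nabla_t$ lives on $E_0$. The edges $r=0$ and $t=0$ carry the constant connection $\nabla_0$ (the latter uses $g_0=\operatorname{id}$), so their CS-contributions vanish. The edge $r=1$ carries $g_t^*\nabla_t$, whose CS-form is $\int_{S^1}Ch(P_t)$ by the identification \eqref{EQU:int-ch-nabla=CS-g-nabla}. The edge $t=1$ (using $\nabla_1=\nabla_0$) carries the straight line from $\nabla_0$ to $g_1^*\nabla_0$, leaving a transgression remainder $CS(\nabla_0,\,g_1^*\nabla_0)$.

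Next I would treat $\eta_2$, whose family $B(r,t) = (1-r)(t\omega) + r\,G^*(t\omega)$, with $\omega = \nabla_0\oplus\nabla_0^\perp$ and $G=g_1\oplus\operatorname{id}$, lives on the trivial bundle $\C_p^q$. On the edges $r=0$ and $r=1$ the families $t\omega$ and $G^*(t\omega)$ are gauge-conjugate, so by invariance $Ch(G^*(t\omega)) = Ch(t\omega)$ pointwise in $t$; the two CS-forms are therefore equal and cancel each other in the Stokes sum. The edge $t=1$ splits as $((1-r)\nabla_0 + r\,g_1^*\nabla_0)\oplus\nabla_0^\perp$, and since $\nabla_0^\perp$ is $r$-independent (hence contributes no $dr$-component upon fiber integration), it reproduces exactly the same remainder $CS(\nabla_0,\,g_1^*\nabla_0)$ as above. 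The crucial edge is $t=0$: because $G^*d = d + G^{-1}dG \neq d$, the family is $B(r,0) = d + r\,G^{-1}dG$, not the trivial connection; expanding its curvature as $R(r) = r(r-1)(G^{-1}dG)^2 + dr\wedge G^{-1}dG$ and applying the beta-integral $\int_0^1 (r(r-1))^{n-1}\,dr = (-1)^{n-1}\frac{((n-1)!)^2}{(2n-1)!}$, exactly as in the derivation of \eqref{EQU:int-Ch-nabla=Ch-g}, identifies this CS-form with $Ch(G) = Ch(h_*(P_t))$.

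Assembling $d\eta_1 + d\eta_2$, the two copies of $CS(\nabla_0,\,g_1^*\nabla_0)$ from the $t=1$ edges cancel against each other with their Stokes signs — and this is precisely what the second summand $\eta_2$ is designed to accomplish, namely to transgress the leftover CS obstruction to the target $Ch(G)$ — leaving $d\eta = \int_{S^1}Ch(P_t) - Ch(h_*(P_t))$ as desired. The main obstacle is the careful sign and orientation bookkeeping in the Stokes formula on the 2-cube, where one must verify that the two residual $CS(\nabla_0,\,g_1^*\nabla_0)$ terms cancel (rather than double) and that $Ch(G)$ enters with the required sign; once the orientations are pinned down, everything reduces to transgression identities and the one explicit expansion in the style of \eqref{EQU:int-Ch-nabla=Ch-g}.
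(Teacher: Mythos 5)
Your proof is correct and follows essentially the same approach as the paper: apply Stokes' theorem over $I\times I$ to each square, identify the boundary $CS$-contributions (constant edges vanish, the $r=1$ edge of the first square gives $\int_{S^1}Ch(P_t)$, the $t=0$ edge of the second square gives $Ch(h_*(P_t))$), observe the cancellation of the two $r$-edges of the second square by gauge invariance, and cancel the residual $CS(\nabla_0, g_1^*\nabla_0)$ transgression terms from the two $t=1$ edges. The only minor difference is that you carry out the curvature expansion and beta-integral explicitly for the $t=0$ edge, where the paper simply cites equation \eqref{EQU:int-Ch-nabla=Ch-g}.
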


\begin{proof} Let $\nabla$ be the connection induced by $P_t$.  The two terms in $\eta_{P_t}$ are induced by squares of connections as in Figure \ref{rectofConns}. We obtain $d(\eta_{P_t})$ from the sum of the boundaries of the two squares.
{\tiny 
\begin{figure}  
\begin{pspicture}(0,0)(12,5)
%\psgrid(0,0)(9,5)
\pspolygon(0,1)(3,1)(3,4)(0,4)
 \rput[r](0,1){$\nabla_0$}
 \rput[r](-.2,4){$\nabla_0$}
 \rput[l](3.1,1){$\nabla_0$}
 \rput[l](3.1,4){$g_1^* \nabla_1$}
 \rput[r](-.1,2.5){$\nabla_0$}
 \rput(1.5,.7){$\nabla_0$}
   \rput[l](3.2,2.5){$(1-r)\nabla_0 + r g_1^* \nabla_1$}
   \rput[l](.4,2.5){$(1-r)\nabla_0 + r g_t^* \nabla_t$}
   \rput[l](1.2,4.3){$g_t^* \nabla_t$}
 \pspolygon(9,1)(12,1)(12,4)(9,4)
 \rput[r](8.8,.8){$\nabla_0 \oplus \nabla_0^\perp$}
  \rput[r](8.8,4){$ (g_1 \oplus id)^* \left(\nabla_0  \oplus \nabla_0^\perp \right) $}
 \rput[l](12.1,1){$0$}
 \rput[l](12.1,4){$(g_1 \oplus id)^* (0)$}
 \rput[r](8.9,2.5){$\begin{matrix} (1-r) \left( \nabla_0 \oplus \nabla_0^\perp  \right)  \\ + \\r(g_1 \oplus id)^*  \left( \nabla_0 \oplus \nabla_0^\perp  \right)\end{matrix}$}
 \rput[r](12,2.5){$\begin{matrix} (1-r) \left( t\nabla_0 \oplus t\nabla_0^\perp  \right)  \\ +  \\r(g_1 \oplus id)^*  \left( t\nabla_0 \oplus t\nabla_0^\perp  \right)\end{matrix}$}
 \rput(10.5,.7){$ t \nabla_0  \oplus t \nabla_0^\perp $}
   \rput[l](12.1,2.5){$ r (g_1 \oplus id)^* (0)$}
   \rput[r](11.8,4.3){$(g_1 \oplus id)^* \left(t \nabla_0  \oplus t \nabla_0^\perp \right)$}
  \end{pspicture} 
\caption{The two parameter squares of connections defining $\eta$}
\label{rectofConns}
\end{figure}
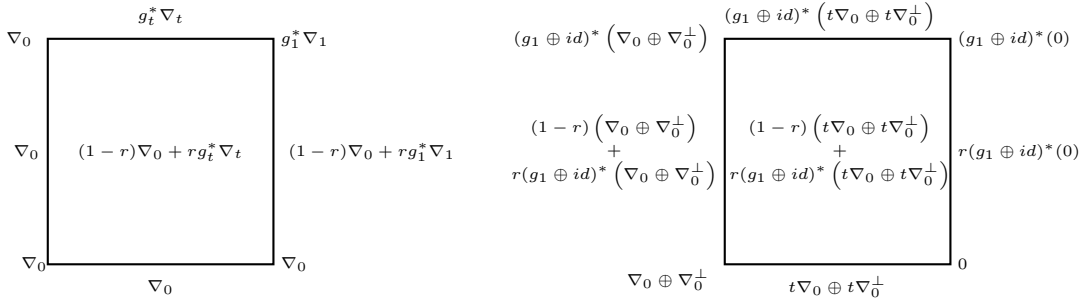
}

For the first term we have 
\[
d \left(\int\int_{(r,t) \in I\times I}   Ch\left( (1-r) \nabla_0 + r g_t^* \nabla_t \right) \right)= CS(g_t^* \nabla_t)  - CS ( (1-r)\nabla_0 + r g_1^* \nabla_1)
\]
since the other two boundary terms are constant paths of connections, so the Chern-Simons form vanishes. By equation \eqref{EQU:int-ch-nabla=CS-g-nabla} above we have
\[
CS(g_t^* \nabla_t) = \int_{I} Ch(g_t^* \nabla_t) =\int_{S^1} Ch( \nabla). 
\]

Calculating the boundary for the second square, the top and bottom edges $(g_1\oplus id)^*(t\nabla_0\oplus t\nabla_0^\perp)$ and $t\nabla_0\oplus t\nabla_0^\perp$ are gauge equivalent, so that they have equal Chern-Simons forms, which cancel, since they appear with opposite signs. Therefore,
\begin{multline*}
d \left(\int\int_{(r,t) \in I\times I}   Ch\left(  (1-r) \left( t\nabla_0 \oplus t\nabla_0^\perp  \right)  + r(g_1 \oplus id)^*  \left( t\nabla_0 \oplus t\nabla_0^\perp  \right) \right)\right)
 \\
=  CS\Big((1-r) \left( \nabla_0 \oplus \nabla_0^\perp  \right)  + r(g_1 \oplus id)^*  \left( \nabla_0 \oplus \nabla_0^\perp  \right)  \Big)- CS \Big(r  (g_1 \oplus id)^*(0)\Big).
\end{multline*}
Note, that the last term does not vanish, since $g_1^*(0)=g_1^{-1}dg_1$, but it is
\[
CS (r  (g_1 \oplus id)^*(0)) = CS( r ( g_1^{-1} dg_1 \oplus 0)) = CS( r  g_1^{-1} dg_1) \stackrel{\eqref{EQU:int-Ch-nabla=Ch-g}}
= Ch(g_1)= Ch(h_*(P_t)).
\] 
The remaining two $CS$ terms cancel, because
\[
(1-r) \left( \nabla_0 \oplus \nabla_0^\perp  \right)  + r(g_1 \oplus id)^*  \left( \nabla_0 \oplus \nabla_0^\perp  \right) =
 \Big( (1-r)\nabla_0 + r g_1^* \nabla_0  \Big)  \oplus \nabla_0^\perp,
\]
and since the right summand $\nabla_0^\perp$ is time independent, it does not contribute to the $CS$ form, so that using $\nabla_0=\nabla_1$, we have that
\[
CS\Big((1-r) \left( \nabla_0 \oplus \nabla_0^\perp  \right)  + r(g_1 \oplus id)^*  \left( \nabla_0 \oplus \nabla_0^\perp  \right)\Big) =
 CS\Big( (1-r)\nabla_0 + r g_1^*\nabla_1  \Big).
\]
This shows that $d\eta=\int_{S^1} Ch(\nabla)-Ch(h_*(P_t))$, which is the claim of the lemma.
\end{proof}
We define the $S^1$-integration map $\II$ using this error of making diagram \eqref{eq;h_*diag} commute, \emph{c.f.} Definition \ref{defn;S1int}.
\begin{defn}
We define $\II:\hat K^{0}(M\times S^1)\to \hat K^{1}(M)$ for a representative $P_t: M \times S^1 \to \B$ by setting
\[
\II(P_t) = [h_*(P_t)] + a(\eta_{P_t})
\]
where $[h_*(P_t)] \in \hat K^{-1}(M)$ is the equivalence class of $h_*(P_t)$.  
\end{defn}

In the following Lemmas \ref{lem;welldefn2}-\ref{LEM:I-versus-a-map-even-to-odd}, we will show that $\II$ is well-defined, and satisfies the axioms for the integration map from Definition \ref{defn:S^1int}.
\begin{lem} \label{lem;welldefn2}
The map $\II : \hat K^{0}(M \times S^1) \to \hat K^{-1}(M)$ is well defined.
\end{lem}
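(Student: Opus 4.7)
The plan is to verify two things: first, that $\II(P_t) = [h_*(P_t)] + a(\eta_{P_t})$ does not depend on the auxiliary embedding $E_{P_t} \subset (M\times S^1)\times \C_p^q$; and second, that if $P_t^0$ and $P_t^1$ are $CS$-equivalent then $\II(P_t^0) = \II(P_t^1) \in \hat K^{-1}(M)$. The first point is essentially automatic, since enlarging $p$ or $q$ only adds trivial bundle summands with the trivial connection: the holonomy around $S^1$ of a trivial connection is the identity, so $h_*(P_t)$ changes only by an identity summand (which does not affect the $CS$-equivalence class in $\hat K^{-1}(M)$), and the extra connection terms contribute $0$ to every Chern form appearing in Definition \ref{defn;eta2}, so $\eta_{P_t}$ is unchanged.

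The main task is the homotopy invariance. Suppose $P_t^s : M \times S^1 \times I \to \B$ is a smooth homotopy with $CS(P_t^s) \in \Om^{\odd}(M \times S^1)$ exact. Since $M \times I$ is compact, we may work with a uniform bundle embedding in $\C_p^q$ for all $s$. View $P_t^s$ as a single map $(M \times I) \times S^1 \to \B$, so that $\eta_{P_t^s} \in \Om^{\even}(M \times I)$ and $h_*(P_t^s) : M \times I \to U$ is a smooth homotopy from $h_*(P_t^0)$ to $h_*(P_t^1)$. Apply Lemma \ref{lem;deta2} to this family to obtain
\[
d_{M \times I}(\eta_{P_t^s}) = \int_{S^1} Ch(P_t^s) - Ch(h_*(P_t^s)) \in \Om^{\even}(M \times I).
\]
Integrating over $I$ using the standard Stokes identity $\int_I d_{M\times I}\omega = i_1^*\omega - i_0^*\omega - d_M \int_I \omega$, together with Fubini and the defining relations $\int_I Ch(h_*(P_t^s)) = CS(h_*(P_t^s))$ and $\int_I \int_{S^1} Ch(P_t^s) = \int_{S^1} CS(P_t^s)$, yields
\[
\eta_{P_t^1} - \eta_{P_t^0} + CS(h_*(P_t^s)) \equiv \int_{S^1} CS(P_t^s) \pmod{\mathrm{Im}(d)}.
\]

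Finally, I invoke the homotopy formula (Lemma \ref{lem:hmptyformula}) for $\hat K^{-1}$ applied to the smooth homotopy $h_*(P_t^s)$, giving $[h_*(P_t^1)] - [h_*(P_t^0)] = a\bigl(CS(h_*(P_t^s))\bigr)$ in $\hat K^{-1}(M)$. Combined with the displayed identity, and using that $a$ vanishes on exact forms, we conclude
\[
\II(P_t^1) - \II(P_t^0) = a\!\left( CS(h_*(P_t^s)) + \eta_{P_t^1} - \eta_{P_t^0} \right) = a\!\left( \int_{S^1} CS(P_t^s) \right).
\]
By assumption $CS(P_t^s)$ is exact on $M \times S^1$, and $\int_{S^1}$ commutes with $d$ because $S^1$ is closed, so $\int_{S^1} CS(P_t^s)$ is exact on $M$ and therefore $a$ of it vanishes. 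Hence $\II(P_t^0) = \II(P_t^1)$, as required.

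The only nontrivial obstacle is the bookkeeping in going from Lemma \ref{lem;deta2} on the enlarged manifold $M \times I$ to a statement that collapses correctly after integration over $I$; this is controlled by naturality of $\eta$ under the inclusions $i_0,i_1: M \hookrightarrow M \times I$ (guaranteed by the remark in Definition \ref{defn;eta2}) and the compactness of $M \times I$, which lets us fix a single embedding for the whole family. Once these are in hand, the rest is a formal application of the homotopy formula and the axioms of $a$.
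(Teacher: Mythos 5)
Your argument is correct and shares the paper's central idea: view the homotopy $P_t^s$ as a single map out of $(M\times I)\times S^1$, apply Lemma~\ref{lem;deta2} on the enlarged base $M\times I$, and integrate over $I$ using the Stokes identity for fiber integration to relate $\eta_{P_t^1}-\eta_{P_t^0}$, $CS(h_*(P_t^s))$, and $\int_{S^1}CS(P_t^s)$. This is exactly the role that the form $\omega=\int_{s\in I}\eta_{P_{t,s}}$ plays in the paper's computation, so the first half of your proof is the same argument, phrased slightly more explicitly.

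Where you genuinely diverge is in converting the relation $[h_*(P_t^1)]-[h_*(P_t^0)]\in Ker(I)$ into $a(CS(h_*(P_t^s)))$. You invoke the abstract homotopy formula of Lemma~\ref{lem:hmptyformula}, which holds for any differential extension, whereas the paper proves the needed identity by hand: it constructs an explicit path $(h_*(P_{t,s})\oplus id)*\gamma_s$ from $h_*(P_{t,1})\oplus h_*(P_{t,0})^{-1}$ to $0$ with $CS$-form equal to $CS(h_*(P_{t,s}))$, using the vanishing-$CS$ inverse path $\gamma_s$. Your route is slicker (it trades a concrete path construction for an appeal to the axioms), but it requires a small point you gloss over: Lemma~\ref{lem:hmptyformula} is stated for $\hat K$ as a functor on \emph{compact} manifolds and an element $x\in\hat K^*(N)$, and $U$ is not compact, so there is no ``identity class'' in $\hat K^{-1}(U)$ in the sense of the paper. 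To make the citation precise, one must observe that, since $M\times I$ is compact, $h_*(P_t^s)$ factors through some finite $U_{-n}^n$, take $N=U_{-n}^n$ and $x=[\mathrm{incl}_n]\in\hat K^{-1}(U_{-n}^n)$, and then compute $R(x)=\mathrm{incl}_n^*Ch$ so that $\int_I (h_*(P_t^s))^*R(x)=CS(h_*(P_t^s))$. With that adjustment your application of the homotopy lemma is correct, and the conclusion $a\!\left(\int_{S^1}CS(P_t^s)\right)=0$ follows exactly as you say, since $\int_{S^1}$ commutes with $d$. Also note the paper's line ``suffices to show $CS(h_*(P_{t,s}))=\eta_{P_{t,1}}-\eta_{P_{t,0}}$ mod exact'' has a sign typo (it should be $\eta_{P_{t,0}}-\eta_{P_{t,1}}$), so your bookkeeping is actually cleaner on this point; the well-definedness-of-embedding remark you include is also sound, though the paper delegates it to Definitions~\ref{DEF:P-assoc-bundle} and~\ref{defn;eta2} rather than addressing it inside the proof.
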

\begin{proof}
Let $P_{t,0}, P_{t,1} : M \times S^1 \to BU \times \Z $ be given and $CS$-equivalent, so there is a path $P_{t,s} : M \times S^1 \times I \to BU \times \Z $ such that
\[
CS(P_{t,s})=\int_{s \in I} Ch(P_{t,s}) \in \Om^{\odd}(M \times S^1)
\]
is exact.  Note this assumption implies that 
\[
\int_{t \in S^1} \int_{s \in I} Ch(P_{t,s}) = \int_{s \in I}  \int_{t \in S^1}  Ch(P_{t,s}) 
\]
is also exact. 

We show that 
\[
[h_*(P_{t,1})]- [h_*( P_{t,0})]  = a( \eta_{P_{t,0}}) - a( \eta_{P_{t,1}}). 
\]
Since $h_*(P_{t,0})$ and $h_*(P_{t,1})$ are homotopic via $s\mapsto h_*(P_{t,s})$, the class $[h_*(P_{t,1})]- [h_*( P_{t,0})]  $ is in the kernel of $I$. So it suffices to show that the path $s \mapsto h_*(P_{t,s})$ satisfies
\[
CS(h_*(P_{t,s})) = \eta_{P_{t,1}} - \eta_{P_{t,0}}  \quad \quad \textrm{mod exact forms}.
\]
and that
\[
CS([h_*(P_{t,1})]- [h_*( P_{t,0})]) = CS(h_*(P_{t,s}))\quad \quad \text{ mod } Im(Ch).
\]

The idea for showing the first equality is to consider the data in Figure \ref{rectofConns} varying smoothly with parameter $s \in [0,1]$, and construct a differential form whose exterior derivative is the difference $CS(h_*(P_{t,s})) + \eta_{P_{t,1}} - \eta_{P_{t,0}} $. Let
\[
\omega = \int_{s \in I}  \eta_{P_{t,s}}.
\]
Then, with $d \eta_{P_{t,s}} = \int_{t\in S^1} Ch(P_{t,s}) - Ch( h_*(P_{t,s}))$, we have
\begin{equation*}
d \omega =    \eta_{P_{t,1}} -  \eta_{P_{t,0}}  - \int_{s \in I}  d \eta_{P_{t,s}}  
=  \eta_{P_{t,1}} -  \eta_{P_{t,0}}  - \int_{s \in I} \left( \int_{t\in S^1} Ch( P_{t,s} ) - Ch(h_*(P_{t,s} )) \right)
\end{equation*}
But $\int_{t\in S^1} Ch( h_*(P_{t,s} ) = CS(h_*(P_{t,s}))$ and $\int_{s \in I} \int_{t\in S^1} Ch( P_{t,s} )$ is exact by assumption.

It remains to show $CS([h_*(P_{t,1})]- [h_*( P_{t,0})]) = CS(h_*(P_{t,s})) $ mod $Im(Ch)$. 
There is a path $\gamma_s$ from $h_*(P_{t,0}) \oplus h_*( P_{t,0})^{-1}$  to $0$ such that $CS(\gamma_s)= 0$ (by the proof of the existence of inverses). Composing this with the path $h_*(P_{t,s}) \oplus id$, from $h_*(P_{t,1}) \oplus h_*(P_{t,0})^{-1}$ to $h_*(P_{t,0}) \oplus h_*(P_{t,0})^{-1}$, we obtain a path $\left(h_*(P_{t,s}) \oplus id \right)  *  \gamma_s$ from $h_*(P_{t,1}) -h_*(P_{t,0})=h_*(P_{t,1}) \oplus h_*(P_{t,0})^{-1}$ to $0$ whose $CS$-form equals $CS(h_*(P_{t,s})) + CS(\gamma_s) = CS(h_*(P_{t,s})) $. This shows that 
\[
CS(h_*(P_{t,s})) = CS([h_*(P_{t,1})] -[h_*(P_{t,0})])  \quad \quad \textrm{mod $Im(Ch)$}.
\] 
This completes the proof of the lemma.
\end{proof}

\begin{lem} \label{lem;grouphomo2}
The map $\II : \hat K^{0}(M \times S^1) \to \hat K^{-1}(M)$ is a group homomorphism.
\end{lem}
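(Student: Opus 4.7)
The plan is to reduce to additivity under the block-sum operation $\oplus$, where the geometry is manifestly additive, and then transfer to $\boxplus$ by invoking the CS-equivalences of the two operations from Lemmas \ref{thm:CSequivUsums} and \ref{thm:CSequivUsums1}.

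First, since $\II$ is well-defined on CS-equivalence classes by Lemma \ref{lem;welldefn2}, and since $P_t \boxplus Q_t$ is CS-equivalent to $P_t \oplus Q_t$ in $\hat K^0(M \times S^1)$ by Lemma \ref{thm:CSequivUsums}, we have $\II(P_t \boxplus Q_t) = \II(P_t \oplus Q_t)$. Likewise, in $\hat K^{-1}(M)$ the sum induced by $\boxplus$ agrees with the sum induced by $\oplus$, by Lemma \ref{thm:CSequivUsums1}. Hence it suffices to establish
\[
\II(P_t \oplus Q_t) \;=\; [h_*(P_t) \oplus h_*(Q_t)] + a(\eta_{P_t}) + a(\eta_{Q_t}).
\]

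Second, I would show $h_*(P_t \oplus Q_t) = h_*(P_t) \oplus h_*(Q_t)$ up to a permutation of basis vectors. The bundle with connection associated to $P_t \oplus Q_t$ splits as the direct sum $(E_{P_t} \oplus E_{Q_t}, \nabla_{P_t} \oplus \nabla_{Q_t})$, and the holonomy around $S^1$ of a direct-sum connection is the direct sum of the individual holonomies. Extending by the identity on the orthogonal complement in the ambient trivial bundle and reindexing via the shift operators $s_m, s_q$ appearing in Definition \ref{rmk;BUsum}, one obtains $h_*(P_t) \oplus h_*(Q_t)$ up to a basis permutation. Such a permutation is CS-equivalent to the identity by the transposition/rotation argument used in the proof of Lemma \ref{lem:CSzero}.

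Third, I would verify $\eta_{P_t \oplus Q_t} = \eta_{P_t} + \eta_{Q_t}$. Every ingredient in the definition of $\eta$, namely the connection $\nabla_{P_t}$, its parallel transport family $g_t$, and the orthogonal-complement connection $\nabla_0^\perp$, splits as a direct sum under $\oplus$. Hence both two-parameter families of connections displayed in Figure \ref{rectofConns} are direct sums for $P_t \oplus Q_t$, and additivity of the Chern character on direct sums of connections, together with linearity of integration over the square, gives the claimed additivity of $\eta$.

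Combining these ingredients with the fact that $a$ is a group homomorphism, we conclude
\[
\II(P_t \oplus Q_t) = [h_*(P_t) \oplus h_*(Q_t)] + a(\eta_{P_t}) + a(\eta_{Q_t}) = \II(P_t) + \II(Q_t).
\]
The main technical subtlety lies in the second step: the shift operators $s_m, s_q$ and the identity extensions in the definition of $h_*$ must be controlled via CS-equivalence of basis permutations, rather than yielding equality on the nose. Unlike the odd-to-even direction of Lemma \ref{LEM:I-group-homo}, no cochain-level Upmeier argument is needed here, since $h_*$ and $\eta$ are manifestly geometric and respect direct sums.
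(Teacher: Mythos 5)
Your proposal is correct and takes essentially the same approach as the paper: the paper's proof likewise shows that $h_*(P_{t,1}\oplus P_{t,2})$ is CS-equivalent to $h_*(P_{t,1})\oplus h_*(P_{t,2})$ ``by re-ordering,'' that $\eta_{P_{t,1}\oplus P_{t,2}}=\eta_{P_{t,1}}+\eta_{P_{t,2}}$ because parallel transport and the Chern character respect block sum, and then concludes via additivity of $a$. You are somewhat more explicit than the paper in two places — spelling out the reduction from $\boxplus$ to $\oplus$ via Lemmas \ref{thm:CSequivUsums} and \ref{thm:CSequivUsums1}, and identifying the ``re-ordering'' as a basis permutation handled by the rotation argument of Lemma \ref{lem:CSzero} — but these are elaborations of the same argument, not a different route.
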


\begin{proof} Let $P_{t,1}, P_{t,2} : M \times S^1 \to \B$. 
By re-ordering we have
\[
h_*(P_{t,1} \oplus P_{t,2}) = hol_{S^1} (P_{t,1} \oplus P_{t,2}) \oplus id
\]
is CS-equivalent to 
\[
h_*(P_{t,1}) \oplus h(P_{t,2}) = hol_{S^1} (P_{t,1}) \oplus id \oplus  hol_{S^1} (P_{t,2}) \oplus id 
\]
so $[h_*(P_{t,1} \oplus P_{t,2})] = [h_*(P_{t,1})] + [h_*(P_{t,2})]$. Also, 
\[
\eta_{P_{t,1}\oplus P_{t,2}} = \eta_{P_{t,1}} + \eta_{P_{t,2}}
\]
since the induced connections' parallel transport, and the Chern Character, respect block sum. So, we conclude that $\II$ is a group homomorphism.
\end{proof}

\begin{lem}
The map $\II : \hat K^{0}(M \times S^1) \to \hat K^{-1}(M)$ satisfies $\II \circ ( id \times r)^* = - \II$ where $r :S^1 \to S^1$ is given by $r(z) = \bar z = z^{-1}$. 
\end{lem}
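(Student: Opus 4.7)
The plan is to apply the group homomorphism property (Lemma \ref{lem;grouphomo2}) to reduce the identity $\II \circ (id \times r)^* = -\II$ to showing $\II(P_t \oplus P_{\bar t}) = 0$ for every representative $P_t : M \times S^1 \to \B$, where $P_{\bar t}(m,t) := P_t(m, 1-t)$. Setting $Q_t := P_t \oplus P_{\bar t}$, I will analyze the two summands of $\II(Q_t) = [h_*(Q_t)] + a(\eta_{Q_t})$ separately.

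For the first summand, orientation reversal of $S^1$ inverts parallel transport, so $\mathrm{hol}(\nabla^{P_{\bar t}}) = \mathrm{hol}(\nabla^{P_t})^{-1}$, and additivity of $h_*$ under block sum gives
\[
h_*(Q_t) = \mathrm{hol}(\nabla^{P_t}) \oplus \mathrm{hol}(\nabla^{P_t})^{-1} \oplus id .
\]
The path $\gamma_g : M \times I \to U$ recalled on p.\pageref{path:gamma_g} runs from $g \oplus g^{-1}$ to $id$ with $CS(\gamma_g) = 0$, hence $[h_*(Q_t)] = 0$ in $\hat K^{-1}(M)$.

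For the second summand, additivity of trace under $\oplus$ (Remark \ref{RMK:CS-additive}) together with inspection of Definition \ref{defn;eta2} yields $\eta_{Q_t} = \eta_{P_t} + \eta_{P_{\bar t}}$. Applying Lemma \ref{lem;deta2} to each summand, together with the identities $\int_{S^1} Ch(P_{\bar t}) = -\int_{S^1} Ch(P_t)$ (orientation reversal of the fiber integral) and $Ch(h_*(P_{\bar t})) = -Ch(h_*(P_t))$ (from $Ch(g^{-1}) = -Ch(g)$, a consequence of the cyclicity of trace), gives $d\eta_{Q_t} = 0$.

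To conclude that $a(\eta_{Q_t}) = 0$ it remains to show that the closed form $\eta_{Q_t}$ lies in $Im([Ch]) \subset \Omega^{\even}(M) / Im(d)$. The plan for this step is to exhibit $\eta_{Q_t}$, modulo exact forms, as the even $CS$-form of a based loop in $U$ and then invoke Theorem \ref{THM:3-statements}\eqref{statement-3} to identify even $CS$-forms of based loops with even Chern forms modulo exact. Concretely, one closes up the path of gauge transformations $t \mapsto g_t \oplus \bar g_t$ on the combined bundle $E_{P_t}|_{M\times\{0\}} \oplus E_{P_{\bar t}}|_{M\times\{0\}}$, which runs from $id$ to $g_1 \oplus g_1^{-1}$, by concatenating it with the zero-$CS$ rotation path supplied by Lemma \ref{lem:CSzero}, producing a based loop $\Gamma_t$ of gauge transformations. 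Rewriting the two rectangles of connections defining $\eta_{P_t} + \eta_{P_{\bar t}}$ in Definition \ref{defn;eta2} in terms of $\Gamma_t^*(\nabla_0 \oplus \nabla_0)$ and applying Stokes then identifies $\eta_{Q_t}$ with $CS(\Gamma_t)$ modulo exact forms, from which $\eta_{Q_t} \in Im([Ch])$ follows and hence $a(\eta_{Q_t}) = 0$.

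The main obstacle I expect is this last identification: the two rectangles of connections in Figure \ref{rectofConns} coming from $P_t$ and $P_{\bar t}$ respectively must be carefully assembled into a single $CS$-form of the loop $\Gamma_t$, and the $\nabla_0^\perp$ contributions together with the additional summand coming from $(g_1 \oplus id)$ in the second rectangle must be tracked so that all auxiliary boundary terms cancel modulo exact forms. Once this is done, combining with $[h_*(Q_t)] = 0$ gives $\II(Q_t) = 0$, which completes the proof.
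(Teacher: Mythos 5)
Your reduction to showing $\II(P_t \oplus P_{\bar t}) = 0$ via Lemma \ref{lem;grouphomo2} is a genuinely different route from the paper's, which simply checks that both summands of $\II((id\times r)^*P_t)$ are the negatives of those of $\II(P_t)$ directly. The pieces you complete are sound: $[h_*(Q_t)]=0$ follows as you say from inversion of holonomy, additivity of $h_*$ under $\boxplus$, and the zero-$CS$ path $\gamma_g$; and $d\eta_{Q_t}=0$ follows from Lemma \ref{lem;deta2}, orientation reversal of the fiber integral, and $Ch(g^{-1})=-Ch(g)$. However, your final step --- exhibiting the closed form $\eta_{Q_t}$ modulo exact as $CS(\Gamma_t)$ for a based loop $\Gamma_t$ built from the gauge transformations $g_t\oplus \bar g_t$ and a rotation path --- is only sketched, and you flag it yourself as the main obstacle. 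As written it is a gap: closedness of $\eta_{Q_t}$ alone does not put it in $Ker(a) = Im([Ch])$ mod exact, and the bookkeeping you describe (assembling the four rectangles, tracking the $\nabla_0^\perp$ and $(g_1\oplus id)$ contributions) is nontrivial and not carried out.

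The paper avoids all of this by observing that, in Definition \ref{defn;eta2}, the loop parameter $t$ enters $\eta_{P_t}$ only through the $\int_{t\in I}$ integrals, and the substitution $t\mapsto 1-t$ (which is exactly what $(id\times r)^*$ does) negates both double integrals. This gives $\eta_{(id\times r)^*P_t} = -\eta_{P_t}$ on the nose, not merely modulo $Im([Ch])$, and the whole lemma follows in two lines. Your reduction is correct in outline and would be valid if you could close the last step, but the cleanest way to close it is precisely to prove $\eta_{P_{\bar t}} = -\eta_{P_t}$ by the paper's substitution argument --- at which point the detour through the group-homomorphism lemma and the $CS$-form-of-a-loop characterization of $Ker(a)$ becomes unnecessary. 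I would recommend proving the sign relation for $\eta$ directly, as the paper does.
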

\begin{proof}
Since holonomy along the reverse loop gives the inverse group element, we have, for $P_t:M\times S^1\to \B$,
\[
 [h_*((id \times r)^* P_t)] = [(h_*(P_t))^{-1}] = - [(h_*(P_t)].
 \]
 Secondly, since $a$ is linear, it suffices to show that
$\eta_{(id \times r)^* P_t} = - \eta_{P_t}$.  Recall 
\begin{multline*}
\eta_{P_t}=  \int\int_{(s,t) \in I\times I}  Ch\left( (1-s) \nabla_0 + s g_t^* \nabla_t \right)
\\
+ \int\int_{(s,t) \in I\times I}  Ch \left( (1-s) \left( t\nabla_0 \oplus t\nabla_0^\perp  \right) +  s (g_1 \oplus id)^*  \left( t\nabla_0 \oplus t\nabla_0^\perp  \right)\right),
\end{multline*}
where for the purposes of the integral the variable  $t\in I$ can also be regarded as $t \in S^1$. In this way, the map $r: S^1 \to S^1$ corresponds to the substitution $t \mapsto 1-t$ for $t \in I$. Making this substitution in the equation above yields $- \eta_{(id \times r)^* P_t}$, which shows the claim.
\end{proof}

\begin{lem}
The  map $\II: \hat K^{0}(M \times S^1) \to \hat K^{-1}(M)$ satisfies $\II \circ p^* = 0$, where $p : M \times S^1 \to M$ is the projection.
\end{lem}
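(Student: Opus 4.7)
The plan is to show both summands of $\II(p^*P) = [h_*(p^*P)] + a(\eta_{p^*P})$ vanish by exploiting the fact that $p^*P$ is constant in the $S^1$-direction. The key observation I will exploit throughout is that if $P_t = p^*P$, then the projection operator $P_t$ pulled back to $M \times S^1$ has $dt$-derivative equal to zero.

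First I would unpack the holonomy term. Since the projection operator is pulled back from $M$, the induced connection $\nabla = P_t\,d P_t$ on the sub-bundle $E_{p^*P}\subset (M\times S^1)\times \C_p^q$ contains no $dt$-component; indeed, $dP_t$ has no $dt$-component. Hence for any section $s$ of $E_{p^*P}$ over $M\times S^1$ we have $\nabla_{\partial/\partial t}s = \partial_t s$, so the parallel transport $g_t$ along the $S^1$-direction is the identity for all $t\in[0,1]$. In particular $g_1 = hol_{S^1}(\nabla_{p^*P}) = \mathrm{id}$, so
\[
h_*(p^*P) = hol_{S^1}(\nabla_{p^*P})\oplus \mathrm{id} = \mathrm{id}_{\C_p^q},
\]
which represents $0 \in \hat K^{-1}(M)$.

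Next I would show $\eta_{p^*P} = 0$ by inspecting both summands in Definition \ref{defn;eta2}. Because $g_t = \mathrm{id}$ for all $t$ and $\nabla_t = \nabla_0$ for all $t$ (the connection is pulled back from $M$), the integrand of the first summand reduces to $Ch\bigl((1-r)\nabla_0 + r\nabla_0\bigr) = Ch(\nabla_0)$, which is a form on $M$ with no $dr$ and no $dt$ components. Its fiber integral over $(r,t)\in I\times I$ is therefore zero. For the second summand, $g_1 = \mathrm{id}$ makes the integrand equal to $Ch\bigl(t\nabla_0\oplus t\nabla_0^\perp\bigr)$, which depends on $t$ but is constant in $r$; again it has no $dr$-component, so the fiber integration over $r\in I$ produces zero. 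Thus $\eta_{p^*P} = 0$, and consequently $a(\eta_{p^*P}) = 0$.

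Combining the two observations gives $\II(p^*P) = 0 + 0 = 0$, as required. I do not anticipate any serious obstacle here: once the observation that $p^*P$ has no $dt$-dependence is made, both terms collapse for essentially formal reasons. The only mild subtlety is making sure the fiber-integration conventions are applied correctly, namely that the $I\times I$ integral extracts the coefficient of $dr\wedge dt$, which vanishes in both cases considered.
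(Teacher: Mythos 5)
Your proposal is correct and follows essentially the same approach as the paper: observe that $p^*P$ is constant in the $S^1$-direction, deduce that the parallel transport $g_t$ is the identity so $h_*(p^*P)$ is the constant identity map (hence trivial in $\hat K^{-1}$), and note that both squares of connections defining $\eta_{p^*P}$ are independent of $r$, so the fiber integrals extracting the $dr\wedge dt$ component vanish. The only cosmetic difference is that you spell out the vanishing of $\eta$ in terms of missing $dr$-components, whereas the paper states that the families of connections ``do not depend on $r$''; these are the same observation.
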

\begin{proof}
Let $P:M\times S^1\to \B$ be a representative for $[P] \in \hat K^{0}(M)$. Then $P_t : M \times S^1 \to \B$ defined by $P_t = p^*(P)$ satisfies $P_t = P$ for all $t \in S^1$. Therefore the connection $\nabla$ on $E \to M \times S^1$ induced by $P_t$  is zero in the $S^1$ direction, \emph{i.e.} $\nabla_{\partial / \partial t} = 0$, and that $\nabla_t=\nabla_0$ for all $t$. It follows that the parallel transport in the $S^1$ direction satisfies $g_t = id$ for all $t$, and in particular, the holonomy along the $S^1$ direction satisfies $h_*(p^*(P)) = id$. Also, the two squares of connections defining $\eta_{p^*(P)}$, as in Figure \ref{rectofConns}, do not depend on $r$, since
\[
 (1-r) \nabla_0 + r g_t^* \nabla_t =  \nabla_0
\]
and
\[
(1-r) \left( t\nabla_0 \oplus t\nabla_0^\perp  \right) +  r (g_1 \oplus id)^*  \left( t\nabla_0 \oplus t\nabla_0^\perp  \right)
=  t\nabla_0 \oplus t\nabla_0^\perp 
\]
for all $r$. Thus, $\eta_{p^*(P)} = 0$. It follows that $a(\eta_{p^*(P)}) = 0$ and so 
\[
\II(p^*(P)) = [h_*(p^*(P))] +  \eta_{p^*(P)} = 0.
\]
This completes the proof of the lemma.
\end{proof}

\begin{lem} \label{lem;Chdiag2}
The map $\II: \hat K^{0}(M \times S^1) \to \hat K^{-1}(M)$ makes the following diagram commute 
\[
\xymatrix{
\hat K^{0}(M \times S^1) \ar[r]^{Ch} \ar[d]^{\II} &  \Om^{\textrm{even}}(M \times S^1) \ar[d]^{\int_{S^1}} \\
 \hat K^{-1} (M)  \ar[r]^-{Ch} &    \Om^{\textrm{odd}}(M)
}
\]
\end{lem}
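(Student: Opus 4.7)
The plan is to reduce the verification to the defining properties of the differential extension together with the explicit formula for $d\eta_{P_t}$ established in Lemma \ref{lem;deta2}. This is the payoff lemma for which $\eta_{P_t}$ was designed.

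First I would unwind the definition: $\II(P_t) = [h_*(P_t)] + a(\eta_{P_t})$, and since $Ch = R$ is additive (it is a natural transformation to an abelian group of forms), we have
\[
Ch(\II(P_t)) = Ch([h_*(P_t)]) + Ch(a(\eta_{P_t})).
\]
The second summand simplifies immediately using the axiom $R \circ a = d$ from Definition \ref{defn:diffext}, yielding $Ch(a(\eta_{P_t})) = d\eta_{P_t}$. For the first summand, $Ch([h_*(P_t)])$ is just the pullback of the universal odd Chern form along the smooth map $h_*(P_t) : M \to U$, which by Definition \ref{DEF:odd-diff-K-theory-R-a-maps} equals $R([h_*(P_t)]) = Ch(h_*(P_t))$.

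Substituting in the formula from Lemma \ref{lem;deta2}, namely
\[
d\eta_{P_t} = \int_{S^1} Ch(P_t) - Ch(h_*(P_t)),
\]
gives
\[
Ch(\II(P_t)) = Ch(h_*(P_t)) + \int_{S^1} Ch(P_t) - Ch(h_*(P_t)) = \int_{S^1} Ch(P_t),
\]
which is exactly the commutativity of the stated diagram.

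There is essentially no obstacle here, since the hard work has been pushed into Lemma \ref{lem;deta2}; the present lemma is just a one-line bookkeeping computation. I would only need to note that $\eta_{P_t}$ is a genuine form on $M$ (not just a class mod exact) before applying $a$, but this is clear from Definition \ref{defn;eta2}, and the map $a$ is defined on $\Om^{\odd}(M)/Im(d)$, so the class $a(\eta_{P_t})$ depends only on $\eta_{P_t}$ modulo exact forms, which does not affect the computation of $Ch \circ a = d$ on any representative.
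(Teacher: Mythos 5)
Your proof is correct and follows essentially the same approach as the paper: both expand $Ch(\II(P_t)) = Ch(h_*(P_t)) + Ch(a(\eta_{P_t}))$ using additivity, apply $Ch \circ a = d$, and then substitute the formula for $d\eta_{P_t}$ from Lemma \ref{lem;deta2}. The paper presents the computation a shade more tersely, but the argument is identical.
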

\begin{proof}
Given a representative $P_t: M \times S^1 \to \B$ for $[P_t] \in \hat K^0(M \times S^1)$, we have, using $Ch\circ a = d$, that
\begin{eqnarray*}
Ch ( \II( P_t) ) 
& =& Ch  (h_*(P_t) ) + Ch( a(\eta_{P_t})) \\
 &=& \left( \int_{S^1} \, Ch( P_t ) -   d  \eta_{P_t}  \right) + d \eta_{P_t}  
 = \int_{S^1}\, Ch (P_t).
 \end{eqnarray*}
This is the claim of the lemma.
\end{proof}

\begin{lem}
The map $\II: \hat K^{0}(M \times S^1) \to \hat K^{-1}(M)$ makes the following diagram commute 
\[
\xymatrix{
\hat K^{0}(M \times S^1) \ar[r]^{I} \ar[d]^{\II} &  K^{0}(M \times S^1) \ar[d]^{\int_{S^1}} \\
 \hat K^{-1} (M)  \ar[r]^-{I} &  K^{-1} (M) 
}
\]
\end{lem}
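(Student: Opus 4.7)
Since $a$ maps into $Ker(I)$ by the exact sequence in Definition \ref{DEF:odd-diff-K-theory-R-a-maps}, we have $I\big(a(\eta_{P_t})\big)=0$, so that for any representative $P_t: M \times S^1 \to \B$,
\[
I\big(\II([P_t])\big) = I([h_*(P_t)]) = [h_*(P_t)] \in [M,U] = K^{-1}(M).
\]
The lemma therefore reduces to showing that the homotopy class of $h_*(P_t) = hol_{S^1}(\nabla_{P_t}) \oplus id$ agrees with the image of $[P_t]$ under the $K$-theoretic $S^1$-integration from Definition \ref{def;S1intKtheory}.

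Imitating the strategy used in the odd-to-even direction, the plan is to set up the comparison diagram
\[
\xymatrix{
[M, L(\B)] \ar[r]^-{\pi} \ar[d]_{=} & [M, \Om(\B)] \ar[rr]^-{h_*} \ar[d]_{=} \ar[dr]^{h_*} & & [M, U] \ar[d]^{=} \\
K^0(M \times S^1) \ar[r]^-{pr} & Ker(j^*) \ar[r]^-{(q^*)^{-1}} & \tilde K^0(\Sigma M_+) \ar[r]^-{\sigma^{-1}} & K^{-1}(M)
}
\]
where $\pi$ denotes the passage to based loops. At the level of homotopy classes this passage is well defined because the evaluation-at-basepoint map $ev_0 : L(\B) \to \B$ is a fibration with fiber $\Om(\B)$ that splits on $\pi_0$, the splitting being induced by the inclusion of constant loops. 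The top composition then realizes the homotopy class $[h_*(P_t)]$ directly: once $P_t$ is replaced by its based-loop representative, applying holonomy gives a map to $U$ living in the fiber over the basepoint, and the padding ``$\oplus id$'' is precisely the stabilization needed to land in the stable unitary group, which does not alter the homotopy class.

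The left square commutes tautologically from the definition of the $S^1$-integration in $K$-theory. The middle triangle is a definitional identity. The key point is therefore the commutativity of the rightmost quadrilateral. This asserts that the induced map on $[M,-]$ of the holonomy map $h : \Om(\B) \to U$ equals the composite $(q^*)^{-1}$ followed by $\sigma^{-1}$. This is a consequence of the fact that $h$ is a homotopy equivalence whose homotopy inverse is (up to homotopy) the Bott periodicity map $E : \B \to \Om U$ of Proposition \ref{PROP:McDuff}, together with the fact that Bott periodicity in the model $E$ is the specific equivalence realizing the suspension isomorphism $\sigma$.

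The main obstacle is the first step: because $\B$ is not a topological group, there is no canonical splitting $L(\B) \simeq \B \times \Om(\B)$ analogous to the $g_t \mapsto g_0^{-1} g_t$ trick used in the odd direction, so some care is required in identifying $[h_*(P_t)]$ with the class coming from the based-loop projection. The cleanest way to handle this appears to be a direct geometric argument: one exhibits, for any smooth $P_t$, a based loop $\tilde P_t$ in $\B$ whose associated bundle with connection differs from that of $P_t$ only by a bundle that is trivial on the $S^1$ factor, so that the holonomy computations agree up to the $\oplus id$ stabilization. Once this identification is made, the remaining pieces assemble exactly as in the proof of the corresponding lemma in the odd-to-even direction, completing the proof.
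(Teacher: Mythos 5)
Your opening reduction is correct, and it matches the paper: since $Im(a)\subset Ker(I)$ the problem collapses to showing $I(h_*(P_t)) = \int_{S^1} I([P_t])$.

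After that, there is a genuine gap, and you have in fact put your finger on exactly where it is. Your plan routes the argument through $[M, L(\B)]\to [M,\Om(\B)]$, imitating the odd-to-even proof, and you correctly observe that the passage to based loops is the problematic step because the $g_t\mapsto g_0^{-1}g_t$ trick has no analogue in $\B$. But you do not close this gap: the paragraph starting ``The cleanest way to handle this appears to be a direct geometric argument'' is a promise, not an argument. That step is the real content of the lemma. The paper resolves it by \emph{not} passing to based loops at all. Its diagram has only $[M\times S^1,\B]\xrightarrow{h_*}[M,U]$ across the top, and the argument is a direct clutching-function computation: if $E$ is the bundle with connection determined by $P_t$, then $pr(E) = E\oplus p^*j^*E^\perp$ is trivial over $M\times S^1$, so the associated class in $\tilde K^0(\Sigma M_+)$ is given by a clutching map $M\to U$ obtained by trivializing the bundle over the two cones of the suspension. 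Parallel transport of the connection $\nabla\oplus d$ furnishes exactly such a trivialization over each cone, and the resulting clutching map is $hol_{S^1}(\nabla_{P_t})\oplus id = h_*(P_t)$, as wanted. This is concrete, avoids the free-to-based loop issue entirely, and is what you would need to supply to complete your outline.

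One further point to be careful with: you assert that $h:\Om(\B)\to U$ has homotopy inverse $E:\B\to\Om U$. These are maps between \emph{different} pairs of spaces ($\Om(\B)\leftrightarrow U$ versus $\B\leftrightarrow\Om U$) and are not inverse to each other; they are two different deloopings in the Bott periodicity spectrum. If you wanted to pursue your diagram you would need to compare $h$ with a homotopy inverse landing in $\Om(\B)$, not with $E$, and then separately relate that to $\sigma^{-1}\circ(q^*)^{-1}$. The paper's clutching argument sidesteps all of this.
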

\begin{proof}
Since $Im(a) \subset Ker(I)$, it suffices to show that if $P_t : M \times S^1 \to \B$, then
\begin{equation}\label{EQU:I-II-odd}
 \int_{S^1} I( [P_t] ) = I( h_*(P_t) ).
\end{equation}

Recall that the $S^1$-integration map in ordinary $K$-theory is defined by the lower row in the following diagram,
\[
\xymatrix{
 [M \times S^1, \B] \ar[rrr]^-{h_*} \ar[d]^{=} & & & [M, U] \ar[d]^{=} \\
K^{0}(M \times S^1) \ar[r]^-{pr} & Ker(j^*) \ar[r]^-{(q^*)^{-1}} & \tilde K^{0}(\Sigma M_+) =K^{0}(M\wedge S^1) \ar[r]^-{\sigma^{-1}} & K^{-1}(M)
}
\]
where $h_*(P_t) = hol_{S^1}(P_t) \oplus id$ is the map induced by holonomy in the $S^1$ direction, and $I( h_*(P_t) ): M \to U$ is the homotopy class of the holonomy. The lower row can be interpreted geometrically as follows. If $E$ is the bundle over $M \times S^1$ determined by $P_t$, then $pr(E) =  E \oplus p^*j^*E^\perp$ is the bundle over $M \times S^1$ obtained by adding to $E$ the bundle $p^*E_0^\perp$ over $M \times S^1$, where $p: M\times S^1 \to M$ is projection, and $j^*E$ is the restriction of $E$ to $M \times \{0\}$. This bundle $pr(E)$ is trivial over $M \times S^1$, and the induced bundle over $\Sigma M_+$ is determined by (a homotopy class of) a map $M \to U$, given by the clutching map, which is determined by trivializing the bundle over each half of the suspension. But, this clutching map is given up to homotopy by $h_*(P_t) = hol_{S^1}(P_t) \oplus id$ since parallel transport of the connection trivializes the bundle over each cone. This shows that the above diagram commutes up to homotopy, and thus we have equation \eqref{EQU:I-II-odd}. This completes the proof.
\end{proof}

\begin{lem}\label{LEM:I-versus-a-map-even-to-odd}
The map $\II: \hat K^{0}(M \times S^1) \to \hat K^{-1}(M)$ makes the following diagram commute 
\[
\xymatrix{
\Om^{\odd}(M \times S^1)/Im(d) \ar[r]^-{a} \ar[d]^{\int_{S^1}} & \hat K^{0}(M \times S^1)\ar[d]^{\II}\\
\Om^{\even}(M) \ar[r]^-{a} /Im(d)& \hat K^{-1}(M ) 
}
\]
\end{lem}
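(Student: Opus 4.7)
The plan is to adapt the proof of Lemma \ref{LEM:odd-I-commute-with-a} to the even-to-odd setting. First I would rewrite each instance of $a$ as the composition $\widehat{CS}^{-1}\circ\pi$ of the projection with the inverse Chern--Simons isomorphism. This factors the desired square into a left square (obviously commuting) and a right square; the middle vertical $\int_{S^1}\colon \Om^{\odd}(M\times S^1)/Im(Ch)\to\Om^{\even}(M)/Im(Ch)$ is well defined because Lemma \ref{lem;deta2} together with Theorem \ref{THM:3-statements}\eqref{statement-1} shows that the fiber integral of an even Chern form is an odd Chern form up to an exact correction, hence lies in $Im(Ch)$ modulo $Im(Ch)$. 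Since $\widehat{CS}^{-1}$ is an isomorphism onto $Ker(I)$, the commutativity reduces to verifying
\[
\widehat{CS}(\II([P_t])) \;=\; \int_{S^1}\widehat{CS}([P_t]) \quad \text{in } \Om^{\even}(M)/Im(Ch)
\]
for every $[P_t]\in Ker(I)\subset\hat K^0(M\times S^1)$.

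Next I would fix such a $[P_t]$ and a smooth null-homotopy $P_{t,s}\colon M\times S^1\times I\to\B$ with $P_{t,0}=I_0$ and $P_{t,1}=P_t$, so that $\widehat{CS}([P_t])=CS(P_{t,s})$ mod $Im(Ch)$. Since $\II([P_t])=[h_*(P_t)]+a(\eta_{P_t})$, and since $s\mapsto h_*(P_{t,s})$ is an explicit null-homotopy of $h_*(P_t)$ (noting $h_*(I_0)=id$ since the trivial bundle has trivial holonomy), additivity of $\widehat{CS}$ on $Ker(I)$ yields
\[
\widehat{CS}(\II([P_t])) \;=\; CS(h_*(P_{t,s})) + \eta_{P_t} \quad \text{mod } Im(Ch).
\]
The claim therefore reduces to the key identity
\[
\int_{S^1} CS(P_{t,s}) \;=\; CS(h_*(P_{t,s})) + \eta_{P_t} \quad \text{mod exact}.
\]

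The crucial idea for the key identity is to reinterpret the same data $P_{t,s}$ as a single map $\tilde P\colon (M\times I)\times S^1\to\B$, $\tilde P(m,s,t)=P_{t,s}(m)$, now with base $M\times I$ and fiber $S^1$. Applying Lemma \ref{lem;deta2} to $\tilde P$ produces a form $\eta_{\tilde P}\in\Om^{\even}(M\times I)$ satisfying $d\eta_{\tilde P}=\int_{S^1}Ch(\tilde P)-Ch(h_*(\tilde P))$ on $M\times I$. By naturality of $\eta$ under restriction (Definition \ref{defn;eta2}), $\eta_{\tilde P}|_{s=1}=\eta_{P_t}$ and $\eta_{\tilde P}|_{s=0}=\eta_{I_0}=0$. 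I would integrate the identity over the $I$-factor: Stokes' theorem for fiber integration gives $\int_I d\eta_{\tilde P} = d_M\int_I \eta_{\tilde P} \pm \eta_{P_t}$, Fubini for fiber integration over $S^1\times I$ gives $\int_I\int_{S^1} Ch(\tilde P) = \int_{S^1}\int_I Ch(\tilde P) = \int_{S^1} CS(P_{t,s})$, and, viewing $h_*(\tilde P)\colon M\times I\to U$ as a path of maps, the definition of $CS$ as fiber integration of $Ch$ gives $\int_I Ch(h_*(\tilde P)) = CS(h_*(P_{t,s}))$. Rearranging produces the key identity modulo the exact form $d_M\int_I \eta_{\tilde P}$.

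The main obstacle is bookkeeping the signs in the Stokes formula for fiber integration over $I$ and in the Fubini swap across $S^1\times I$; I would adopt the conventions already used in Sections \ref{SEC:Diff-K-Theory}--\ref{SEC:S1-integration} and verify them with care, but the structural argument outlined above should then go through.
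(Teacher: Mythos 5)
Your proposal is correct and follows the same route as the paper's proof: factor $a$ through $\widehat{CS}^{-1}\circ\pi$, reduce to the key identity $\int_{S^1}CS(P_{t,s})=CS(h_*(P_{t,s}))+\eta_{P_t}$ modulo exact forms, and derive it from Lemma \ref{lem;deta2} applied over the base $M\times I$ together with Stokes and Fubini. The only presentational difference is that you make the thickened map $\tilde P\colon (M\times I)\times S^1\to\B$ explicit and work with the genuine form $\eta_{\tilde P}\in\Om^{\even}(M\times I)$, whereas the paper writes $\omega=\int_{s\in I}\eta_{P_{t,s}}$ (implicitly the same object via naturality of $\eta$), so your version arguably makes the invocation of Lemma \ref{lem;deta2} and the Stokes bookkeeping clearer.
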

\begin{proof}
By definition of the maps $a:\Om^{*+1}(M)/Im(d) \to \hat K^*(M)$, we can write the diagram in the statement of the lemma as 
\[
\xymatrix{
\Om^{\odd}(M \times S^1)/Im(d) \ar[r]^-{pr} \ar[d]^{\int_{S^1}} &  \Om^{\odd}(M \times S^1)/Im(Ch) 
\ar[d]^{\int_{S^1}} \ar[r]^-{CS^{-1}} \ar[d]^{\int_{S^1}}   &  Ker(I) \subset \hat K^{0}(M \times S^1)\ar[d]^{\II}\\
\Om^{\even}(M) \ar[r]^-{pr} /Im(d) & \Om^{\even}(M) \ar[r]^-{CS^{-1}} /Im(Ch)  &  Ker(I) \subset  \hat K^{-1}(M ) 
}
\]
where $pr$ is projection, which is well defined since $Im(d) \subset Im(Ch)$ by Theorem \ref{THM:3-statements}\eqref{statement-1}, and the middle vertical map is well defined since $Im(\int_{S^1} \circ Ch) \subset Im(Ch)$ by Lemma \ref{lem;Chcommutes}. The left square clearly commutes so it suffices to show the right square commutes. The maps $CS^{-1}$ are isomorphisms onto $Ker(I)$.

Given $P_t: M \times S^1 \to BU \times \Z$ such that $P_t \in Ker (I)$ we can choose $P_{t,s} :M \times S^1 \times I \to BU \times \Z$ such that $P_{t,1} = P_t$ and $P_{t,0}$ is constant. We need to show that
\[
CS(\II(P_t)) = \int_{t \in S^1} CS(P_{t,s})  \quad \quad \textrm{mod exact forms}.
\]
The class $[h_*(P_{t,1})]$ is also in the kernel of $I$, and $h_*(P_{t,s})$ is a path from $h_*(f_{P,1})$ to the constant $h_*(P_{t,0})$.
So it suffices so to show
\[
CS(h_*(P_{t,s})) + \eta_{P_{t,1}} = \int_{t \in S^1} CS(P_{t,s})  \quad \quad \textrm{mod exact forms}.
\]

As in Lemma \ref{lem;welldefn2}, let
\[
\omega = \int_{s \in I}  \eta_{P_{t,s}}.
\]
Using $d \eta_{P_{t,s}} = \int_{t\in S^1} Ch(P_{t,s}) - Ch( h_*(P_{t,s}))$ we have
\[
d \omega  =  \eta_{P_{t,1}} -  \eta_{P_{t,0}}  - \int_{s \in I}\int_{t\in S^1} Ch( P_{t,s} ) +  \int_{s \in I} Ch(h_*(P_{t,s} )).
\]
Now $\eta_{P_{t,0}} = 0$ since $P_{t,0}$ is constant, $\int_{s \in I} \int_{t\in S^1} Ch( P_{t,s} ) = \int_{t \in S^1} CS(P_{t,s})$, and $\int_{t\in S^1} Ch( h_*(P_{t,s} ) = CS(h_*(P_{t,s}))$, which shows that
\[
d\omega=  \eta_{P_{t,1}} - \int_{t \in S^1} CS(P_{t,s})+  CS(h_*(P_{t,s})).
\]
This completes the proof of the lemma.
\end{proof}

The collection of lemmas in subsections \ref{SEC:odd-to-even} and \ref{SEC:even-to-odd} prove the following corollary.
\begin{cor}
The map $\II: \hat K^{*+1}(M \times S^1) \to \hat K^*(M)$ defines an $S^1$-integration map.
\end{cor}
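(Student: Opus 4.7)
The plan is essentially a bookkeeping exercise: the corollary asserts that the two maps constructed in Subsections \ref{SEC:odd-to-even} and \ref{SEC:even-to-odd} together satisfy the three axioms of Definition \ref{defn:S^1int}, and each individual axiom has been verified as a separate lemma. So I would begin the proof by recalling that an $S^1$-integration map is a natural transformation $\II : \hat K^{*+1}(-\times S^1) \to \hat K^*(-)$ satisfying (i) anti-invariance under the orientation reversal $r$ on $S^1$, (ii) vanishing on the image of the pullback by the projection $p : M \times S^1 \to M$, and (iii) compatibility with $a$, $I$ and $R$ under fiber-integration of differential forms and $S^1$-integration in ordinary $K$-theory.

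In the odd-to-even case, well-definedness and additivity are Lemmas \ref{LEM:odd-well-def} and \ref{LEM:I-group-homo}; naturality is built into the definition, since $\II(g_t)$ is constructed from pullback-compatible ingredients ($\beta$, $CS$, $Ch$, and homotopy classes). The first axiom (orientation reversal) and the second axiom (vanishing on $p^*$) are the two subsequent lemmas. Axiom (iii) splits into three commuting squares: compatibility with $R = Ch$ is Lemma \ref{lem;Chcommutes}, compatibility with $I$ is the lemma identifying $\II$ with the topological Bott/suspension construction, and compatibility with $a$ is Lemma \ref{LEM:odd-I-commute-with-a}. Assembling these gives the $S^1$-integration map in the odd-to-even direction.

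The even-to-odd direction is handled symmetrically: well-definedness and the group homomorphism property are Lemmas \ref{lem;welldefn2} and \ref{lem;grouphomo2}; naturality of $\II(P_t) = [h_*(P_t)] + a(\eta_{P_t})$ follows from the naturality of holonomy, of the induced connection, and the remark in Definition \ref{defn;eta2} that $\eta_{P_t}$ pulls back correctly. The orientation reversal and $p^*$-vanishing axioms are the corresponding two lemmas, and the three squares constituting axiom (iii) are Lemma \ref{lem;Chdiag2}, the subsequent lemma identifying the topological $S^1$-integration with holonomy via the clutching construction, and Lemma \ref{LEM:I-versus-a-map-even-to-odd}.

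There is no real obstacle at this stage: every required identity is already among the lemmas above. The only potential subtlety is checking that naturality in $M$ holds — but this is immediate once one observes that each of the three ingredients of $\II$ (smoothing a homotopy class, the form $\beta$ in the odd case or $\eta_{P_t}$ in the even case, and the map $a$) is natural in $M$, so the sum $[h] + a(\eta)$ commutes with pullback along any smooth $f : M' \to M$. Combining the two directions of $\II$ into a single $\Z_2$-graded natural transformation and citing the listed lemmas therefore completes the proof of the corollary.
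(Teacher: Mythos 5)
Your proof matches the paper's, which simply states that the collection of lemmas in the two subsections establishes the corollary; you have correctly identified exactly which lemma verifies each of the three axioms in Definition~\ref{defn:S^1int} in each of the two degree directions. Your added remark on naturality is a reasonable elaboration the paper leaves implicit (independence of the choices of $h$ and $k_r$, shown in Lemmas~\ref{LEM:odd-well-def} and~\ref{lem;welldefn2}, is precisely what lets one choose pullback-compatible data over $M'$ and $M$), but the overall route is the same.
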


Finally, by the uniqueness theorem of differential $K$-theory (Theorem 3.3 of \cite{BS3}) we have

\begin{thm}
Differential $K$-theory is represented by 
\[
\hat K^0(M) = Hom(M, \B) \quad \quad \hat K^{-1}(M) = Hom(M, U)
\]
as functors on $Smooth_{\hat K}$ (see Definition \ref{defn:SmoothK}) where $Hom$ means $CS$-equivalence classes of maps
 (see Definitions \ref{defn:CSequiveven} and \ref{defn:CSequivodd}).
\end{thm}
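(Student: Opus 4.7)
The statement is really a summary theorem, so the plan is to package together all the structure assembled in Sections~\ref{SEC:Diff-K-Theory} and~\ref{SEC:S1-integration} and invoke the Bunke--Schick uniqueness theorem. First, by Definitions~\ref{defn:CSequivodd} and~\ref{defn:CSequiveven} the sets $Hom(M,U)$ and $Hom(M,\B)$ of $CS$-equivalence classes of maps \emph{are} by construction the underlying sets of the functors $\hat K^{-1}(M)$ and $\hat K^{0}(M)$, so the set-level bijections are tautological. What requires verification is that this $\Z_2$-graded functor, equipped with the structure maps we have produced, is \emph{the} differential $K$-theory.

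The plan is therefore as follows. Step one: assemble the quadruple $(\hat K,R,I,a)$. The abelian group structures come from Remark~\ref{rmk;CHCSUadd} and Lemma~\ref{thm:CSequivUsums1} in odd degree and from Lemma~\ref{lem:CSzero} together with Lemma~\ref{thm:CSequivUsums} in even degree. The maps $R=Ch$ and $I$ (the forgetful map to the homotopy class) and $a = \widehat{CS}^{-1}\circ\pi$ are constructed and shown to satisfy the diamond diagram, the identity $R\circ a = d$, and the six-term exact sequence in Definitions~\ref{DEF:odd-diff-K-theory-R-a-maps} and~\ref{DEF:even-diff-K-theory-R-a-maps}. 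Thus $(\hat K,R,I,a)$ is a differential extension of $K$-theory in the sense of Definition~\ref{defn:diffext}. Step two: recall that $\II:\hat K^{*+1}(M\times S^1)\to \hat K^{*}(M)$ has been built in Definition~\ref{defn;S1int} (odd to even) and in Section~\ref{SEC:even-to-odd} (even to odd), and that the collection of Lemmas~\ref{LEM:odd-well-def}--\ref{LEM:odd-I-commute-with-a} and Lemmas~\ref{lem;welldefn2}--\ref{LEM:I-versus-a-map-even-to-odd} verify in turn that $\II$ is well defined, is a group homomorphism, satisfies $\II\circ(\id\times r)^{*}=-\II$ and $\II\circ p^{*}=0$, and makes the compatibility square with $R$, $I$, and $a$ commute. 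In other words, $\II$ is an $S^{1}$-integration in the sense of Definition~\ref{defn:S^1int}.

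Step three: apply the Bunke--Schick uniqueness theorem (Theorem~\ref{thm:uniqueness-of-diff-k-theory}): for any other differential extension of $K$-theory equipped with an $S^{1}$-integration, there is a unique natural isomorphism identifying the two, compatible with $R$, $I$, $a$, and $\II$. This proves that our functor is differential $K$-theory.

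Step four: promote functoriality from the category of smooth manifolds with smooth maps to the quotient category $Smooth_{\hat K}$ of Definition~\ref{defn:SmoothK}. This is purely formal once the axioms of Definition~\ref{defn:diffext} are in place, and is recorded as Corollary~\ref{cor:Konquotient}: the argument is that if $f_{0}\sim f_{1}$ in $Smooth_{\hat K}$ via a homotopy $f_{t}$ with $\int_{I}f_{t}^{*}X\in \mathrm{Im}(R)$ for every $X\in \mathrm{Im}(R)$, then Lemma~\ref{lem:hmptyformula} together with the identity $\mathrm{Im}([Ch])=\mathrm{Im}(R)\bmod\mathrm{Im}(d)$ and $Ker(a)=\mathrm{Im}([Ch])$ force $f_{0}^{*}=f_{1}^{*}$. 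Hence $\hat K^{0}$ and $\hat K^{-1}$, identified with $Hom(-,\B)$ and $Hom(-,U)$, descend to well-defined contravariant functors on $Smooth_{\hat K}$, and the theorem follows. There is essentially no obstacle at this stage because all of the hard work (the construction of $a$, the additivity and independence of choices of $\II$, the precise comparison of Chern and Chern--Simons forms via the correcting form $\beta$ of Lemma~\ref{lem;dbeta} and $\eta$ of Definition~\ref{defn;eta2}) has been carried out in the preceding sections; the remaining content is simply bookkeeping to invoke uniqueness.
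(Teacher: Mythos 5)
Your proof plan is correct and takes essentially the same route as the paper: the theorem is a summary result, and the paper's own ``proof'' consists of nothing more than the invocation of the Bunke--Schick uniqueness theorem (Theorem~\ref{thm:uniqueness-of-diff-k-theory}) immediately following the Corollary that packages the lemmas of Section~\ref{SEC:S1-integration} into the statement that $\II$ is an $S^1$-integration. Your Step~4, which makes the passage to $Smooth_{\hat K}$ explicit via Corollary~\ref{cor:Konquotient}, is a reasonable elaboration of what the paper leaves implicit from Section~\ref{SEC:Diff-Ext-of-K}.
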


\end{document}